\patchcmd{\section}{\scshape}{\bfseries}{}{}
\renewcommand{\@secnumfont}{\bfseries}
\numberwithin{equation}{section}
\numberwithin{figure}{section}
\let\c@equation\c@figure
\newcommand{\labeltext}[3][]{%
	\@bsphack%
	\csname phantomsection\endcsname% in case hyperref is used
	\def\tst{#1}%
	\def\labelmarkup{}% How to markup the label itself
	\def\refmarkup{}%
	\ifx\tst\empty\def\@currentlabel{\refmarkup{#2}}{\label{#3}}%
	\else\def\@currentlabel{\refmarkup{#1}}{\label{#3}}\fi%
	\@esphack%
	\labelmarkup{#2}% visible printed text.
}
\newtheoremstyle{mydefinition}{10pt}{10pt}{}{}{\bfseries}{.}{.5em}{}
\theoremstyle{definition}
\newtheorem{definition}[equation]{Definition}
\newtheorem*{definition*}{Definition}
\newtheorem*{notation*}{Notation}
\newtheorem{remark}[equation]{Remark}
\newtheorem*{remark*}{Remark}
\newtheoremstyle{mytheorem}{10pt}{10pt}{\itshape}{}{\bfseries}{.}{.5em}{}
\theoremstyle{theorem}
\newtheorem{theorem}[equation]{Theorem}
\newtheorem*{theorem*}{Theorem}
\newtheorem{proposition}[equation]{Proposition}
\newtheorem*{proposition*}{Proposition}
\newtheorem{lemma}[equation]{Lemma}
\newtheorem*{lemma*}{Lemma}
\newtheorem*{corollary*}{Corollary}
\newtheorem{conjecture}[equation]{Conjecture}
\newtheorem*{conjecture*}{Conjecture}
\theoremstyle{definition}
\newtheoremstyle{mynamed}{}{}{\itshape}{}{\bfseries}{.}{.5em}{\thmnote{#3}\thmnumber{. #2}}
\theoremstyle{mynamed}
\newenvironment{example*}{
	\pushQED{\qed}\examplexnonum
}{
	\popQED\endexamplexnonum
}
\xpatchcmd{\proof}{\itshape}{\normalfont\proofnamefont}{}{}
\newcommand{\proofnamefont}{\bfseries}
\newcommand{\define}[1]{\emph{#1}}
\newcommand{\RR}{\mathbb{R}}
\newcommand{\QQ}{\mathbb{Q}}
\newif\ifhascomments \hascommentstrue
\newcommand{\matt}[1]{{\color{red}[[\ensuremath{\spadesuit\spadesuit\spadesuit} #1]]}}
\newcommand{\sean}[1]{{\color{red}[[\ensuremath{\clubsuit\clubsuit\clubsuit} #1]]}}
\newcommand{\matt}[1]{}
\newcommand{\sean}[1]{}
\renewcommand{\setminus}{\smallsetminus}
\title{Approximating rational points on horospherical varieties}
\author{Sean Monahan and Matthew Satriano}
\address{Sean Monahan, Department of Pure Mathematics, University of Waterloo}
\email{sean.monahan@uwaterloo.ca}
\address{Matthew Satriano, Department of Pure Mathematics, University of Waterloo}
\email{msatrian@uwaterloo.ca}
\begin{document}
	
\begin{abstract}
Let $X$ be a smooth projective split horospherical variety over a number field $k$ and $x\in X(k)$. Contingent on Vojta's conjecture, we construct a curve $C$ through $x$ such that (in a precise sense) rational points on $C$ approximate $x$ better than any Zariski dense sequence of rational points. This proves a weakening of a conjecture of McKinnon in the horospherical case. Our results make use of the minimal model program and apply as well to $\Q$-factorial horospherical varieties with terminal singularities.
\end{abstract}
	
\maketitle
\setcounter{tocdepth}{1}
\tableofcontents

\section{Introduction}\label{sec:introduction}

In 1842, Dirichlet proved his famous Approximation Theorem, stating that for every irrational number $x$, there are infinitely many rational numbers $\frac{a}{b}$ for which $|x-\frac{a}{b}|<\frac{1}{b^2}$. One may rephrase the Approximation Theorem slightly differently as follows. Let the approximation exponent $\tau_x$ be the unique element of $(0,\infty]$ such that the inequality
$$\left|{x-\frac{a}{b}}\right| \leq \frac{1}{b^{\tau_{x}+\delta}}$$
has only finitely many solutions $\frac{a}{b}\in \QQ$ in reduced form whenever $\delta>0$, and has infinitely many solutions when $\delta<0$. Then Dirichlet's Theorem states that $\tau_x\geq2$ for all irrational $x$. In 1844, Liouville \cite{liouville1844nouvelle} proved that if $x\in \RR$ is algebraic of degree $d$ over $\QQ$, then $\tau_{x}\leq d$. This upper bound was subsequently improved by Thue \cite{thue1909uber}, Siegel \cite{siegel1921approximation}, Dyson \cite{dyson1947approximation}, and Gelfand, leading finally to Roth's crowning theorem \cite{roth1955rational} that $\tau_x\leq2$ for all algebraic $x\in\RR$. Therefore, Dirichlet's Theorem and Roth's Theorem together show that $\tau_x=2$ for all irrational and algebraic $x$.

In \cite{mckinnon2015seshadri}, McKinnon and Roth generalized $\tau_x$ (or rather its reciprocal) to arbitrary projective varieties $X$ over a number field $k$ by replacing the function $|x-\frac{a}{b}|$ by a distance function $\dist_v(x,\cdot)$ depending on a place $v$ of $k$, and replacing $\frac{1}{b}$ by a height function $H_D(\cdot)$ depending on an ample divisor $D$. %An essential change, however, is that they moved the exponent $\tau_x$ from the height to the distance; this was done to make their generalized exponents behave better with respect to changes in $D$. 
Given a sequence $\{x_i\}\subseteq X(k)$ approximating a point $p$, one then obtains an associated \emph{approximation constant} $\alpha_{p,\{x_i\}}(D)$; see Section \ref{sec:defining the approximation constant} for the precise definition. The constant $\alpha_p(D)$ is defined to be the infimum of $\alpha_{p,\{x_i\}}(D)$ over all choices of sequences $\{x_i\}$; if one restricts attention only to sequences contained in a subvariety $Z\subseteq X$, then the resulting infimum is denoted by $\alpha_{p,Z}(D)$.

In \cite{mckinnon2007conjecture}, McKinnon conjectured that any sequence of best approximation (i.e., a sequence of rational points which minimizes $\alpha_{p,\{x_i\}}(D)$) must, in fact, lie on a rational curve.

\begin{conjecture}[{\cite[Conjecture 2.7]{mckinnon2007conjecture}}]\label{conj:mckinnon}
	Let $X$ be a projective variety defined over a number field $k$, let $D$ be an ample divisor on $X$, and fix a point $p\in X(k)$ for which there is a rational curve, defined over $k$, passing through $p$. Then there exists a curve $C\subseteq X$ (necessarily rational) such that $\alpha_{p,C}(D)=\alpha_p(D)$. 
\end{conjecture}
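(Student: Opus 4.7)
The plan is to bound $\alpha_p(D)$ from below by a suitable infinitesimal invariant of $D$ at $p$, and to construct, above $p$, a rational curve defined over $k$ that realizes a matching upper bound. For any $k$-morphism $\pi\colon \mathbb{P}^1\to C\subseteq X$ with $\pi(q)=p$, transporting Dirichlet's theorem on $\mathbb{P}^1$ via $\pi$ produces sequences of $k$-rational points on $C$ whose associated constants are controlled by $\frac{\deg\pi^*D}{\mathrm{mult}_q\pi^{-1}(p)}$ (up to the precise normalization). Minimizing over rational curves through $p$ gives an upper bound of the form
\[
\alpha_p(D)\;\leq\;c\cdot\inf_{C\ni p}\frac{D\cdot C}{\mathrm{mult}_p C}
\]
for an explicit constant $c$. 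For the matching lower bound, I would adapt the Roth-type argument of McKinnon--Roth, combining auxiliary sections of $mD$ vanishing to high order at $p$ with diophantine approximation on the blowup $\mathrm{Bl}_p X$, to obtain an inequality of the form $\alpha_p(D)\geq \frac{c}{\epsilon_p(D)}$, where $\epsilon_p(D)$ is the Seshadri constant of $D$ at $p$. Since $\epsilon_p(D)$ itself equals $\inf_{C\ni p}\frac{D\cdot C}{\mathrm{mult}_p C}$ when that infimum is attained, the two inequalities will coincide provided the Seshadri-minimizing curve can be taken to be rational and defined over $k$.

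The heart of the plan is therefore a Mori-theoretic construction. Starting from the rational curve through $p$ supplied by the hypothesis, I would apply bend-and-break to produce a rational curve breaking off along a $K_X$-negative extremal ray, and then iterate within the MMP framework to obtain a rational curve $C_{\min}$ through $p$ minimizing $\frac{D\cdot C}{\mathrm{mult}_p C}$ among all curves through $p$. Because the statement is arithmetic, it is essential that $C_{\min}$ be defined over $k$: one must restrict to Galois-stable extremal rays and then carry out descent on the relative Hilbert scheme of pointed rational curves $(C,q)\to(X,p)$ so that the extremal curve produced has a $k$-point parametrizing it. With $C_{\min}$ in hand, the chain
\[
\alpha_p(D)\;\leq\;\alpha_{p,C_{\min}}(D)\;\leq\;\tfrac{c(D\cdot C_{\min})}{\mathrm{mult}_p C_{\min}}\;=\;\tfrac{c}{\epsilon_p(D)}\;\leq\;\alpha_p(D)
\]
closes up, and Conjecture \ref{conj:mckinnon} follows.

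The main obstacle is precisely the construction of an optimal rational $k$-curve realizing the Seshadri constant at $p$, and this is a genuine obstruction: in full generality the Seshadri constant at $p$ need not be computed by any irreducible curve, and even when it is, no mechanism is known for forcing a computing curve to be rational and defined over the base field. This is the reason Conjecture \ref{conj:mckinnon} remains open outside of special classes of varieties. Any serious attack on the full conjecture must therefore supply new input at this step, and in practice one proceeds by restricting to varieties whose cones of curves are understood combinatorially — Fano varieties, toric varieties, or, as in the present paper, horospherical varieties — where an explicit optimal rational curve can be exhibited from the combinatorial data and the corresponding approximation bounds then verified (conditionally on Vojta's conjecture when needed).
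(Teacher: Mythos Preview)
The statement you are addressing is Conjecture~\ref{conj:mckinnon}, and the paper does not prove it. It is recorded as an open conjecture of McKinnon; the paper's contribution is the weaker Theorem~\ref{thm:main theorem} (and its consequence Theorem~\ref{thm:consequence of main theorem and Vojta}), which establishes only the inequality $\alpha_{p,C}(D)\le \alpha_{p,\{x_i\}}(D)$ against Zariski dense sequences, only for split horospherical varieties, and only under a canonical-boundedness hypothesis implied by Vojta's Main Conjecture. There is therefore no ``paper's own proof'' of Conjecture~\ref{conj:mckinnon} to compare your proposal against.

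Your write-up already diagnoses this correctly in its final paragraph: the step where you need a rational curve defined over $k$ computing the Seshadri constant at $p$ is exactly the missing idea, and no general mechanism for producing such a curve is known. The earlier paragraphs also overstate what is available. The McKinnon--Roth lower bound gives $\alpha_p(D)\ge \tfrac{n}{n+1}\epsilon_p(D)$ with $n=\dim X$, not a bound that matches the Dirichlet-type upper bound on the nose, so the chain of inequalities you write does not close up even when a Seshadri-minimizing rational $k$-curve exists. And bend-and-break does not by itself force the resulting extremal rational curve to pass through a prescribed $k$-point $p$, nor to minimize $\tfrac{D\cdot C}{\mathrm{mult}_p C}$ over all curves through $p$. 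In short, what you have written is a reasonable heuristic explanation of why the conjecture is plausible and why it is hard, but it is not a proof, and the paper does not claim one either.
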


This conjecture is known in some special cases, primarily in dimension $2$: it was shown for split rational surfaces of Picard rank at most four in \cite{mckinnon2007conjecture}, cubic surfaces in \cite{mckinnon2016analogue}, and blow-ups of the $n$-th Hirzebruch surface at special configurations of at most $2n$ points in \cite{santos2019rational}. The conjecture was verified for special classes of toric varieties in \cite{huang2021rational}, namely when $p$ is in the open torus and the pseudo-effective cone $\overline{\mathrm{Eff}}(X)$ is simplicial. This conjecture was also recently proved by McKinnon and the second-named author \cite{satriano2021approximating} for smooth projective split toric surfaces $X$ assuming Vojta's Main Conjecture \cite[Conjecture 3.4.3]{vojta1987diophantine}; higher dimensional analogues of the conjecture were also shown \cite{satriano2021approximating}. Our results in this current paper generalize those of \cite{satriano2021approximating} to the much broader class of horospherical varieties.

Recall that a \define{horospherical variety} is a normal variety $X$ equipped with the action of a connected reductive algebraic group $G$ in such a way that $X$ has an open orbit whose points are stabilized by a maximal unipotent subgroup of $G$. When $G=T$ is a torus, we recover the class of toric varieties. As with toric varieties and fans, horospherical varieties have a rich combinatorial theory using so-called \textit{coloured fans}; see \cite{luna1983plongements,knop1991luna,monahan2023overview} for information on this topic. The advantage to working with horospherical geometry is that it is vastly broader than toric geometry, e.g. the horospherical class contains the class of all flag varieties and includes many examples of projective varieties with non-binomial relations. 

Horospherical varieties have seen much activity. The (log) minimal model program for horospherical varieties was determined in \cite{brion1993mori,pasquier2018log}, Fano horospherical varieties were classified in \cite{pasquier2008varietes}, smooth horospherical varieties with Picard number 1 were classified in \cite{pasquier2009smooth}, and the Cox ring and Cox construction (as introduced in \cite{cox1995homogeneous}) was describe in \cite{gagliardi2014cox,gagliardi2019luna}. %,monahan2023horospherical}. 
In these results, the aforementioned combinatorial theory plays an indispensable role in understanding horospherical varieties. 
%on this combinatorial theory to prove the main results in this paper. 

%The aforementioned combinatorial theory plays an indispensable role in understanding horospherical varieties, e.g. it has been used to describe 

%Currently, Conjecture \ref{conj:mckinnon} is largely out of reach without imposing additional constraints. In higher dimensions, one can consider a weaker form of the problem, where one aims to prove that sequences of points on rational curves approximate $p$ better than any Zariski dense sequence converging to $p$; in the case when $X$ is a surface, this weaker problem is equivalent to Conjecture \ref{conj:mckinnon}.

To state our results precisely, we recall the following definition %for a special class of rational points 
introduced in \cite{satriano2021approximating}. 

\begin{definition}\label{def:canonically bounded}
	Let $X$ be a projective $\Q$-Gorenstein variety defined over a number field $k$, and let $p\in X(k)$. We say that $X$ is \define{canonically bounded at $p$} if $\alpha_{p,\{x_i\}}(-K_X)\geq \dim(X)$ for all Zariski dense sequences $\{x_i\}\subseteq X(k)$. 
\end{definition}

Under the presence of Vojta's Main Conjecture, canonical boundedness is automatic for smooth horospherical varieties, as we state below. Note that, by \define{split} horospherical variety over a number field $k$, we mean that the acting group $G$ is \textit{split} reductive over $k$, i.e. $G$ is defined over $k$ and has a split maximal torus over $k$. 

\begin{proposition}\label{prop:Vojta->canboudned}
	Let $X$ be a smooth projective split horospherical variety over a number field $k$. Then Vojta's Main Conjecture for $X$ implies that all $p\in X(k)$ are canonically bounded.
\end{proposition}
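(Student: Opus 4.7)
The plan is to adapt the toric argument of \cite{satriano2021approximating} by exploiting the structure of $X$ as a $G$-variety with a combinatorially controlled boundary. I would apply Vojta's Main Conjecture to the reduced sum of $B$-stable prime divisors on $X$ (possibly after a $G$-equivariant log resolution), and then convert the resulting arithmetic inequality into a lower bound on $\alpha_{p,\{x_i\}}(-K_X)$ via the canonical class formula for horospherical varieties.

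First, by Zariski density, after passing to a subsequence (which does not affect $\alpha_{p,\{x_i\}}(-K_X)$) we may assume that $\{x_i\}$ lies in the open $G$-orbit of $X$. Let $D$ denote the reduced sum of all $B$-stable prime divisors of $X$, i.e., the union of the $G$-stable prime divisors with the colours; this is the complement of the open $B$-orbit. After a $G$-equivariant log resolution $\pi\colon \tilde X \to X$ that is an isomorphism on the open $G$-orbit, we may assume $D$ is simple normal crossings. Applying Vojta's Main Conjecture to $(X,D)$ with an ample divisor $A$, for every $\varepsilon > 0$ there is a proper Zariski closed $Z \subsetneq X$ with
\begin{equation*}
m_S(D,x) + h_{K_X}(x) \leq \varepsilon\,h_A(x) + O(1), \qquad x \in X(k)\setminus Z,
\end{equation*}
and by Zariski density we may pass to a further subsequence of $\{x_i\}$ lying outside $Z$. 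The canonical class formula for horospherical varieties \cite{brion1993mori,pasquier2018log} states that $-K_X = \sum_Y Y + \sum_\alpha m_\alpha D_\alpha$ with all coefficients at least $1$, so that $-K_X - D$ is effective; combined with Vojta's inequality this yields
\begin{equation*}
m_S(D,x_i) \leq h_{-K_X}(x_i) + \varepsilon\,h_A(x_i) + O(1).
\end{equation*}

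Finally, since $D$ is SNC and---after choosing $\pi$ so that the strict transforms of components of $D$ together with the exceptional divisors over $p$ contain exactly $n = \dim X$ components meeting transversally at a lift of $p$---one has the local proximity estimate $m_S(D,x_i) \geq n\bigl(-\log \dist_v(x_i,p)\bigr) - O(1)$ for $i \gg 0$. Taking $\liminf_{i\to\infty}$ of the ratio $h_{-K_X}(x_i)/\bigl(-\log \dist_v(x_i,p)\bigr)$ and letting $\varepsilon\to 0$ gives $\alpha_{p,\{x_i\}}(-K_X) \geq n$, as required. The main obstacle is ensuring that the chosen equivariant modification produces $n$ transversally intersecting SNC components through a lift of $p$: when $p$ lies on a deep boundary stratum this is automatic from smoothness and the horospherical combinatorics, but when $p$ lies in the open $G$-orbit one must perform a $G$-equivariant blowup (or reduce along the torus fibration $G/H \to G/P$ to the toric case on a fibre) in such a way that the contribution to the canonical class is controlled by the corresponding log discrepancies, paralleling the toric argument in \cite{satriano2021approximating}.
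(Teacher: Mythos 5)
Your approach is genuinely different from the paper's, and it has a real gap. The paper's proof is a two-line reduction: it invokes \cite[Proposition~0.1]{satriano2023erratum-approximating}, which says that Vojta's Main Conjecture for a smooth projective $X$ with $-K_X$ big implies all rational points are canonically bounded, and then simply verifies $-K_X$ is big for horospherical $X$ via the anticanonical formula $-K_X=\sum_{D\in\calD(X)}b_D D$ with $b_D>0$. Your observations that $-K_X-D$ is effective and that $D$ supports positivity are exactly what is needed for this bigness check, so that part is sound; but you do not in fact invoke bigness where it is needed (to absorb the $\varepsilon h_A$ term into $h_{-K_X}$ before taking $\varepsilon\to 0$), which is a minor omission.

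The substantive gap is the proximity estimate $m_S(D,x_i)\geq n\bigl(-\log\dist_v(x_i,p)\bigr)-O(1)$. This requires $n=\dim X$ components of $D$ to pass through $p$; since $m_S(D_j,x_i)=O(1)$ whenever $p\notin D_j$, the estimate you actually get is proportional to the number of boundary components containing $p$. For $p$ in the open $B$-orbit this number is zero, and for $p$ on an intermediate stratum it is strictly less than $n$; only for $p$ on a closed $G$-orbit corresponding to a maximal coloured cone do you get $n$ components. You flag this as ``the main obstacle,'' but the proposed fixes do not resolve it. A $G$-equivariant blowup centered at a point of the open orbit does not exist: a $G$-stable center containing $p$ must contain the dense orbit, hence its closure is all of $X$. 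After a non-equivariant blowup at $p$, the point $p$ has no single lift --- its fibre is the exceptional divisor $E\cong\PP^{n-1}$ --- so there is no ``lift of $p$'' where $n$ components could meet, and the lifted sequence need not converge to any point. The correct mechanism, which is what the cited erratum proposition carries out, uses a \emph{single} exceptional divisor $E$ over $p$ and extracts the factor $n$ not from $n$ components but from the log discrepancy: $K_{\tilde X}=\pi^*K_X+(n-1)E$, so Vojta applied to $(\tilde X,E)$ gives $m_S(E,\tilde x)+(n-1)h_E(\tilde x)\leq h_{-K_X}(\pi\tilde x)+\varepsilon h_{\tilde A}(\tilde x)+O(1)$, and since $h_E\geq m_S(E,\cdot)+O(1)$ this yields $n\cdot m_S(E,\tilde x)$ on the left. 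Your proposal is missing this idea; it substitutes a boundary-multiplicity argument that only works at the deepest strata and cannot be repaired by equivariant modifications.
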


With the assumption of canonical boundedness, we are able to prove the weaker version of Conjecture \ref{conj:mckinnon} for projective split horospherical varieties; this is the main result of our paper, stated below. %Note that, in order to remove the $\Q$-Gorenstein assumption on our varieties, 
We say a point is \textit{extrinsically} canonically bounded if it is canonically bounded in a partial resolution $X'\to X$ where $X'$ has terminal singularities (see Definition \ref{def:extrinsically canonically bounded}). 

\begin{theorem}\label{thm:main theorem}
	Let $X$ be a projective split horospherical variety over a number field $k$, and let $p\in X(k)$. If $X$ is extrinsically canonically bounded at $p$, then for all $\Q$-Cartier nef $\Q$-divisors $D$ on $X$, there is an irreducible rational curve $C$ through $p$ such that $C$ is unibranch at $p$ and 
	\begin{align*}
		\alpha_{p,C}(D) \leq \alpha_{p,\{x_i\}}(D)
	\end{align*}
	for all Zariski dense sequences $\{x_i\}\subseteq X(k)$.
\end{theorem}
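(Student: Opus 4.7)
My plan is to adapt the toric strategy of \cite{satriano2021approximating} by combining extrinsic canonical boundedness with the minimal model program for horospherical varieties developed in \cite{brion1993mori,pasquier2018log}. First, I would use the hypothesis of extrinsic canonical boundedness to replace $X$ by a $\mathbb{Q}$-factorial terminal partial resolution $\pi\colon X'\to X$ that is canonically bounded at some preimage $p'$ of $p$; such a resolution exists in the horospherical category via colored-fan combinatorics. Because $\pi^{*}D$ is still nef and any rational curve through $p'$ that is unibranch at $p'$ pushes forward to one through $p$ that is unibranch at $p$, it suffices to prove the theorem when $X$ itself is $\mathbb{Q}$-factorial terminal and canonically bounded at $p$. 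The main subtlety in this reduction is that a Zariski dense sequence on $X$ need not lift to a Zariski dense sequence on $X'$, but sequences whose lifts fail this are eventually contained in the $\pi$-exceptional locus, which is a union of split horospherical subvarieties of lower dimension, and so can be handled by induction on $\dim X$.

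Next, I would analyze $D$ using the polyhedral structure of $\overline{NE}(X)$ for horospherical varieties: by the combinatorial description of the Mori cone in \cite{brion1993mori}, every extremal ray is generated by the class of a $B$-stable rational curve, where $B$ is a Borel subgroup of $G$. My aim is to identify such a class $[C_{0}]$ minimizing the ratio $(C_{0}\cdot D)/(C_{0}\cdot(-K_{X}))$, and then to produce a representative $C$ of $[C_0]$ that passes through $p$, is unibranch at $p$, and admits explicit control on $m_{p}(C)$. The $G$-action supplies the main mechanism: a generic $G$-translate of a $B$-stable rational curve has the same numerical class, so by density of $G$-orbits this translate can be arranged to contain $p$ whenever $p$ lies in the open orbit; general $p$ will require an additional equivariant-specialization argument driven by colored-fan combinatorics.

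To finish, I would apply a Roth/Liouville-type inequality to bound $\alpha_{p,C}(D)$ in terms of $(C\cdot D)$ and $m_{p}(C)$, combined with canonical boundedness $\alpha_{p,\{x_{i}\}}(-K_{X})\geq \dim X$ and the positive-linearity of $\alpha$ in the divisor class. Concretely, this gives
\[
\alpha_{p,\{x_{i}\}}(D) \;\geq\; \dim X\cdot \min_{\gamma}\frac{D\cdot \gamma}{(-K_{X})\cdot \gamma},
\]
where $\gamma$ ranges over an appropriate family of curve classes, and choosing the extremal $[C_{0}]$ to realize this minimum yields the desired inequality $\alpha_{p,C}(D)\leq \alpha_{p,\{x_{i}\}}(D)$.

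The main obstacle I anticipate is the curve-through-$p$ step. In the toric case \cite{satriano2021approximating} every rational point limits to a torus-fixed point under the $T$-action, so extremal $T$-stable curves can easily be transported through $p$. For general horospherical $X$, however, $B$-stable rational curves are pinned to specific orbit closures, $G$ need not fix $p$, and producing a translate of an extremal curve that simultaneously passes through $p$, remains unibranch at $p$, and preserves both its numerical class and its multiplicity data is delicate. I expect the bulk of the horospherical-specific work to concentrate here, drawing on the colored-fan description of $B$-stable curves and a careful deformation argument that respects all of the approximation invariants at once.
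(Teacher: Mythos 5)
The high-level skeleton of your proposal (reduce to the $\Q$-factorial terminal canonically bounded case, perturb $D$ toward $D + aK_X$ by choosing the $K_X$-negative extremal ray minimizing $(D\cdot\gamma)/(-K_X\cdot\gamma)$, then find a good curve through $p$ and invoke canonical boundedness) agrees with the paper. But you've identified the critical step — producing a curve through $p$ with controlled $-K_X\cdot C$ — and your plan for it is not what the paper does, and as stated would not go through.

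You propose taking a $B$-stable curve generating the extremal ray $\mathcal{R}$ and translating it by $g\in G(k)$ to pass through $p$. This only works cleanly when $p$ is in the open $G$-orbit. You acknowledge this and defer to an unspecified ``equivariant-specialization argument'' for $p$ in a boundary orbit. The paper sidesteps this entirely with a different mechanism: after the initial reductions, one does not attempt to drag the extremal curve to $p$ directly. Instead, one \emph{iterates} elementary MMP steps (Propositions \ref{prop:MMP preserves canonically bounded}, \ref{prop:MMP reduction}, \ref{prop:MMP reduction-special-case}), pushing the point $p$ forward through divisorial contractions, Mori fibrations and flips, using the flip tower \eqref{eq:flip tower} and Lemma \ref{lemma:flip tower divisor pullbacks} to track the nef divisor, until the pushed-forward point lies in the exceptional locus $\Exc(\psi)$ of the current elementary step. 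Your plan has no analogue of this iteration; it handles only a single reduction.

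Once $p\in\Exc(\psi)$, the fibre $F$ through $p$ is a projective $\Q$-factorial horospherical variety of \emph{Picard number one} (Lemma \ref{lemma:fibres have Picard number 1}), and the curve is then built inside $F$: either a dual Schubert curve when $F$ is a flag variety (Proposition \ref{prop:Picard 1 - flag variety result}) or a one-parameter subgroup orbit closure in an orbit closure of $F$ when $\rank N>0$ (Lemma \ref{lemma:divisor-curve bound}). This is structurally different from translating an extremal curve of $X$ itself, and it is precisely what makes the ``through $p$, unibranch at $p$'' requirements tractable. You would also need Lemma \ref{lemma:canonical divisor of fibre inequality}, which compares $-K_X\cdot C$ with $-K_F\cdot C$, to transfer the bound from the fibre back to $X$; nothing in your outline plays this role.

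Finally, there is a purely horospherical ingredient with no toric counterpart that your proposal omits entirely: the positive-root inequality $\sum_{\alpha\in\calC}\sum_{\beta\in R^+\setminus R_I}\langle\beta,\alpha^\vee\rangle\leq\#\calC+\#(R^+\setminus R_I)$ of Proposition \ref{prop:root inequality}, together with its rigidity statement (equality iff $G/P$ is a product of projective spaces). This is what makes the Picard-number-one bound $-K_W\cdot C\leq\dim(W)+1$ (and $\leq\dim(W)$ when $W$ is not $\P^n$) actually hold once colours are present, and it is the main new technical content beyond the toric case. Without it, your Liouville/canonical-boundedness comparison in the final paragraph has no way to close, because the anticanonical coefficients $b_\alpha$ of the colour divisors are not $1$ and must be controlled. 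So the gap is concrete: the curve-through-$p$ mechanism is wrong for boundary points, the iterative MMP reduction to $p\in\Exc(\psi)$ and the passage to the Picard-one fibre are missing, and the root inequality needed to bound $-K_W\cdot C$ on that fibre is absent.
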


\begin{remark}\label{rmk:ample divisors vs nef in conjecture}
	Note that while Conjecture \ref{conj:mckinnon} is only stated for ample divisors, our Theorem \ref{thm:main theorem} applies to the more general class of $\Q$-Cartier nef $\Q$-divisors.
\end{remark}

Assuming Vojta's Main Conjecture, Proposition \ref{prop:Vojta->canboudned} tells us that we may replace the canonically bounded assumption on $p$ in Theorem \ref{thm:main theorem} with the simpler assumption that $p$ be smooth. Therefore, we obtain the following result. 

\begin{theorem}\label{thm:consequence of main theorem and Vojta}
	Let $X$ be a projective split horospherical variety over a number field $k$. Assume that Vojta's Main Conjecture holds for some projective horospherical (strong) resolution of singularities of $X$. Then, for all smooth points $p\in X(k)$ and all $\Q$-Cartier nef $\Q$-divisors $D$ on $X$, there exists an irreducible rational curve $C$ through $p$ such that $C$ is unibranch at $p$ and 
	\begin{align*}
		\alpha_{p,C}(D) \leq \alpha_{p,\{x_i\}}(D)
	\end{align*}
	for all Zariski dense sequences $\{x_i\}\subseteq X(k)$. 
\end{theorem}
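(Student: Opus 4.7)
The plan is to deduce Theorem \ref{thm:consequence of main theorem and Vojta} from the combination of Proposition \ref{prop:Vojta->canboudned} and Theorem \ref{thm:main theorem}, with the only real content being that a smooth point on $X$ lifts uniquely through a strong horospherical resolution to a smooth (hence canonically bounded) point that witnesses extrinsic canonical boundedness.

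First, I would fix a projective horospherical strong resolution of singularities $\pi\colon \widetilde{X}\to X$ for which Vojta's Main Conjecture is assumed to hold. Since $\widetilde{X}$ is smooth (and in particular has terminal singularities, $\Q$-factorial, etc.) and split horospherical over $k$, Proposition \ref{prop:Vojta->canboudned} applies and tells us that every $k$-rational point of $\widetilde{X}$ is canonically bounded; that is, for all Zariski dense sequences $\{\widetilde{x}_i\}\subseteq \widetilde{X}(k)$ and all $q\in \widetilde{X}(k)$, we have $\alpha_{q,\{\widetilde{x}_i\}}(-K_{\widetilde{X}})\geq \dim(\widetilde{X})$.

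Next, because $\pi$ is a strong resolution, it is an isomorphism over the smooth locus of $X$. Given a smooth $p\in X(k)$, there is therefore a unique $\widetilde{p}\in \widetilde{X}(k)$ lying over $p$, and $\pi$ is a local isomorphism there. By the previous paragraph, $\widetilde{p}$ is canonically bounded in $\widetilde{X}$, and $\widetilde{X}$ is a $\Q$-factorial horospherical variety with terminal singularities. This is precisely what it means (cf.\ Definition \ref{def:extrinsically canonically bounded}) for $X$ to be extrinsically canonically bounded at $p$, using the partial resolution $\pi$ itself as the witnessing model.

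Finally, with $p$ extrinsically canonically bounded, Theorem \ref{thm:main theorem} applies verbatim: for any $\Q$-Cartier nef $\Q$-divisor $D$ on $X$, it produces an irreducible rational curve $C\subseteq X$ through $p$, unibranch at $p$, with $\alpha_{p,C}(D)\leq \alpha_{p,\{x_i\}}(D)$ for every Zariski dense sequence $\{x_i\}\subseteq X(k)$. There is no real obstacle here beyond matching definitions; the substantive work has already been done in Proposition \ref{prop:Vojta->canboudned} and Theorem \ref{thm:main theorem}. The only point that requires care (and which I would verify explicitly) is the existence of a horospherical strong resolution of $X$ so that Vojta's conjecture can be hypothesized on a variety of the same combinatorial type; this is already part of the combinatorial toolkit alluded to in the introduction and is what allows Proposition \ref{prop:Vojta->canboudned} to be applied to $\widetilde{X}$.
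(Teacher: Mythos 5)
Your argument is exactly the one the paper intends (and sketches in the paragraph preceding the theorem statement): pass to a smooth horospherical strong resolution, apply Proposition~\ref{prop:Vojta->canboudned} there to get canonical boundedness, note that the resolution is an isomorphism over the smooth locus so that $p$ is extrinsically canonically bounded in the sense of Definition~\ref{def:extrinsically canonically bounded}, and invoke Theorem~\ref{thm:main theorem}. The proposal is correct and follows the paper's approach.
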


%Lastly, in Section \ref{sec:review of horospherical MMP}, we provide a brief review of the horospherical Minimal Model Program. We hope this will serve as a useful reference.

\subsection*{Notation and conventions}

Throughout this paper we use the following notation. All varieties are assumed to be irreducible, and all varieties and algebraic groups are assumed to be defined over a number field $k$. \emph{Moreover, we implicitly assume that all horospherical varieties are split.}

Regarding horospherical varieties and coloured fans, we use the notation and conventions from \cite{monahan2023overview}. Throughout the paper, we fix a connected split reductive (linear) algebraic group $G$, a Borel subgroup $B\subseteq G$, a maximal torus $T\subseteq B$, and let $U\subseteq B$ denote the maximal unipotent subgroup of $B$. We may assume that our fixed torus $T$ is split over $k$. Fix a horospherical subgroup $H\subseteq G$ containing $U$, and let $P:=N_G(H)$ denote the associated parabolic subgroup. Let $N=N(G/H)$ denote the coloured lattice associated to $G/H$, which has universal colour set $\calC=\calC(G/H)$ and colour points $\{u_\alpha\in N:\alpha\in\calC\}$. 

For a horospherical $G/H$-variety $X$ and $\alpha\in\calC$, we let $D_\alpha^X$ denote the colour divisor in $X$ corresponding to $\alpha$, or we simply write $D_\alpha$ when there is no confusion. Let $\calD(X)$ denote the set of $B^-$-invariant prime divisors in $X$, which consists of the colour divisors together with the $G$-invariant prime divisors. Let $\Sigma^c(X)$ denote the coloured fan associated to $X$, let $\calF(\Sigma^c(X))=\calF(X)\subseteq\calC$ denote its colour set, and let $\Sigma^c(X)(1)$ denote the set of coloured rays of $\Sigma^c(X)$. 

Note that the group $\Aut^G(X)$ of $G$-equivariant automorphisms of $X$ is a torus whose lattice of one-parameter subgroups is $N$. Since $\Aut^G(X)$ is a quotient of $T$, it is a split torus over $k$.

\subsection*{Acknowledgements}

We thank Changho Han and David McKinnon for helpful conversations. The first-named author was partially supported by a PGS-D scholarship from the National Sciences and Engineering Research Council of Canada (reference number: PGSD3-558713-2021). The second-named author was partially supported by a Discovery Grant from the National Sciences and Engineering Research Council of Canada as well as a Mathematics Faculty Research Chair from the University of Waterloo.

\section{Defining the approximation constant}\label{sec:defining the approximation constant}

In this section, we provide the definition of the approximation constant. For a more detailed discussion, see \cite{mckinnon2015seshadri}. Recall that all varieties are defined over a number field $k$. 

\begin{definition}\label{def:approx constant for sequence}
	Let $X$ be a projective variety, $v$ be a place of $k$, $p\in X(\overline{k})$, and $D$ be a $\QQ$-Cartier $\Q$-divisor
	%where height is interpreted multiple to get Cartier then root height
	on $X$.  For any sequence $\{x_i\}\subseteq X(k)$ of distinct
	points with $\dist_v(p,x_i)\rightarrow 0$, we set
	\begin{align*}
		A(\{x_i\}, D) := \left\{{
			\gamma\in\RR\mid
			\dist_v(p,x_i)^{\gamma} H_D(x_i)\,\,\mbox{is bounded from above}
		}\right\}.
	\end{align*}
	We define the \define{approximation constant} of $\{x_i\}$ with respect to $D$ to be
	\begin{align*}
		\alpha_{p,\{x_i\}}(D)=
		\begin{cases}
			\inf A(\{x_i\},D),& A(\{x_i\},D)\neq\varnothing\\
			\infty,& A(\{x_i\},D)=\varnothing
		\end{cases}.
	\end{align*} 
\end{definition}

\begin{remark}\label{rmk:approx constant and subsequences}
	Note that, if $\{x_i'\}$ is a subsequence of $\{x_i\}$, then $A(\{x_i\},D)\subseteq A(\{x_i'\},D)$.  Thus, $\alpha_{p,\{x_i'\}}(D)\leq \alpha_{p,\{x_i\}}(D)$, so we may freely replace a sequence with a subsequence when establishing lower bounds on $\alpha_{p,\{x_i\}}(D)$.
\end{remark}

As $i\to\infty$ we have $\dist_v(p,x_i)\to0$.  So, one expects that $\dist_v(p,x_i)^{\gamma}H_{D}(x_i)$ tends to $0$ for large $\gamma$ and to $\infty$ for small $\gamma$.  The number $\alpha_{p,\{x_i\}}(D)$ marks the transition point between these two behaviours.

\begin{definition}\label{def:approx constant}
	Let $X$ be a projective variety, $D$ a $\QQ$-Cartier divisor on $X$, and $p\in X(\overline{k})$.  Then $\alpha_{p}(D)$ is defined to be the infimum of all $\alpha_{p,\{x_i\}}(D)$ as we range over sequences of distinct points $\{x_i\}\subseteq X(k)$ converging to $p$.  If no such sequence exists, then set $\alpha_{p}(D)=\infty$.
\end{definition}

To finish this section, we include the definition of \textit{extrinsically} canonically bounded. Recall that terminal singularities are the natural class one considers when running the minimal model program (MMP).

\begin{definition}\label{def:extrinsically canonically bounded}
	Suppose that $X$ is a projective (split) horospherical $G$-variety, and let $p\in X(k)$. We say that $X$ is \define{extrinsically canonically bounded at $p$} if there exists a horospherical birational map $f:X'\to X$ where $X'$ is a projective terminal $\Q$-factorial horospherical $G$-variety, $f$ is an isomorphism at $p$, and $X'$ is canonically bounded at $p$ (in the sense of Definition \ref{def:canonically bounded}).
\end{definition}

%\begin{remark}\label{rmk:comments on extrinsically canonically bounded}
%	We make two remarks on Definition \ref{def:extrinsically canonically bounded}. First, note that such a resolution $f$ \mynote{always exists} for $X$. Second, if $X$ is already $\Q$-factorial and terminal, then $X$ being extrinsically canonically bounded at $p$ is \mynote{equivalent} to $X$ being canonically bounded at $p$. 
%\end{remark}

\section{Review of horospherical MMP}\label{sec:review of horospherical MMP}

We provide a brief review some aspects of the minimal model program (MMP) for horospherical varieties. % since the strategy for the proof of Theorem \ref{thm:main theorem} relies on the MMP. \sean{review this following sentence} 
Much of this section involves specializing the notation and results of Brion's paper \cite{brion1993mori} on spherical MMP to the horospherical subclass, which allows us to properly formulate the combinatorial arguments that are essential to proving Theorem \ref{thm:main theorem}. 

Throughout this section, let $X$ be a projective $\Q$-factorial horospherical $G/H$-variety. Denote its coloured fan by $\Sigma^c$, which is on the coloured lattice $N$. Since $\Aut^G(X)$ is split, $N$ and, by extension, $\Sigma^c$ are $\Gal(\ol{k}/k)$-invariant, which implies that $\Pic(X)$ is $\Gal(\ol{k}/k)$-invariant, so all MMP contractions of $X$ are well-defined over $k$. 

Let $\cont_\calR:X\to X_\calR$ denote the contraction of an extremal ray $\calR$ of the Mori cone of $X$. From \cite[Theorem 3.1]{brion1993mori} we know that $X_\calR$ is also a projective horospherical $G$-variety, and the map is a horospherical morphism. Thus, $X_\calR$ corresponds to a coloured fan $\Sigma_\calR^c$ on a coloured lattice $N_\calR$. Recall that there are three possibilities for $\cont_\calR$: it can be a Mori fibre space, a divisorial contraction, or a flipping contraction. In the last case, $X_\calR$ is not $\Q$-factorial, but we have a flip $\psi:X\dashrightarrow X_+$ where $X_+$ is a projective $\Q$-factorial horospherical $G/H$-variety; see \cite[Proposition 4.3]{brion1993mori}. Let $\Sigma_+^c$ denote the coloured fan corresponding to $X_+$, which lives on the same coloured lattice $N$.

\subsection{The Mori cone}\label{subsec:the Mori cone}

Following \cite[Section 3.2]{brion1993mori}, the Mori cone $\NE(X)$ is generated by a finite set of curve classes, which we describe as follows. Up to linear equivalence, every Cartier divisor on $X$ has the form $\delta=\sum_{(\rho,\varnothing)} a_\rho D_\rho + \sum_{\alpha\in\calC} a_\alpha D_\alpha$ for some $a_\rho,a_\alpha\in\Z$, where the first sum is over all non-coloured rays $(\rho,\varnothing)$ in $\Sigma^c$; see \cite[Section 7]{monahan2023overview} for a review of divisors on horospherical varieties. Fix such a $\delta$, and let $\varphi_\delta:N_\R\to\R$ denote its associated piecewise linear function with Cartier data $\{m_\sigma\}_{\sigma\in\Sigma_{\max}}\subseteq N^\vee$; so $\varphi_\delta(u_\alpha)=a_\alpha$ for each $\alpha\in\calC$ and $\varphi_\delta(v_\rho)=a_\rho$ for each primitive generator $v_\rho$ of a non-coloured ray $(\rho,\varnothing)$. Note that this notation also works for $\Q$-Cartier $\delta$ by allowing rational coefficients and rational Cartier data. 

Given a wall $\mu\in\Sigma$, write $\mu=\sigma_+\cap\sigma_-$ for unique maximal cones $\sigma_+,\sigma_-\in\Sigma_{\max}$. Choose $m^\mu\in N^\vee$ to be a primitive lattice element such that the functional $\langle m^\mu,-\rangle$ is positive on $\sigma^+$, zero on $\mu$, and negative on $\sigma_-$. Then $m_{\sigma_+}-m_{\sigma_-}\in N^\vee$ is an integer multiple of $m^\mu$, which we denote $\frac{m_{\sigma_+}-m_{\sigma_-}}{m^\mu}\in\Z$. Define $C_\mu\in N_1(X)=N^1(X)^\vee$ via
\begin{align}\label{eq:wall curve class}
	\delta \cdot C_\mu := \frac{m_{\sigma_+}-m_{\sigma_-}}{m^\mu}.
\end{align}
We call $C_\mu$ a \define{wall curve class}. Note that there exists an irreducible $B^-$-invariant curve (isomorphic to $\P^1$) whose class in $N_1(X)$ is $C_\mu$; see \cite[Proposition 3.3]{brion1993mori}. 

On the other hand, given a pair $(\alpha,\sigma^c)$ where $\sigma^c\in\Sigma^c_{\max}$ and $\alpha\in\calC\setminus\calF(\sigma^c)$, we define $C_{\alpha,\sigma^c}\in N_1(X)=N^1(X)^\vee$ via
\begin{align}\label{eq:colour curve class}
	\delta \cdot C_{\alpha,\sigma^c} := a_\alpha - \langle m_\sigma,u_\alpha\rangle.
\end{align}
We call $C_{\alpha,\sigma^c}$ a \define{colour curve class}. Note that, if the ray $\R_{\geq 0} C_{\alpha,\sigma^c}\subseteq N_1(X)_\R$ does not contain any wall curve classes, then there exists an irreducible $B^-$-invariant curve (isomorphic to $\P^1$) whose class in $N_1(X)$ is $C_{\alpha,\sigma^c}$; see \cite[Proposition 3.6]{brion1993mori}. 

Then $\NE(X)$ is generated by all these wall curve classes and colour curve classes, i.e.
\begin{align*}
	\NE(X) = \Cone\Bigg(C_\mu,C_{\alpha,\sigma^c} ~:~ \begin{split}&\mu\in\Sigma \text{ is a wall}, \\ &(\alpha,\sigma^c): ~\sigma^c\in\Sigma^c_{\max},\alpha\in\calC\setminus\calF(\sigma^c)\end{split} \Bigg) \subseteq N_1(X)_\R.
\end{align*}

By \cite[Theorem 3.2]{brion1993mori}, every extremal ray of $\NE(X)$ which does not contain any wall curve classes is generated by a colour curve class $C_{\alpha,\sigma^c}$ with $\alpha\notin\calF(X)$ and $u_\alpha\in\sigma$. Therefore, when we consider colour curve classes, we always assume that these conditions are satisfied.

\subsection{Colour curve contractions}\label{subsec:colour curve contractions}

Suppose that $\calR$ is an extremal ray of $\NE(X)$ which does not contain any wall curve classes. Then $\calR$ is generated by some colour curve class, say $C_{\alpha,\sigma^c}$. Under the contraction $\cont_\calR:X\to X_\calR$, we follow \cite[Section 3.4]{brion1993mori} to describe the coloured fan $\Sigma^c_\calR$ associated to $X_\calR$.

If $u_\alpha\neq 0$, then $N_\calR$ is the coloured lattice $N$ (with the same universal colour set); and if $u_\alpha=0$, then $N_\calR$ has the same underlying lattice as $N$, but the universal colour set is $\calC_\calR=\calC\setminus\{\alpha\}$. The underlying fan $\Sigma_\calR$ is the same as $\Sigma$, and there are two cases for the colour set: if $u_\alpha\neq 0$, then $\calF(\Sigma_\calR^c)=\calF(\Sigma^c)\cup\{\alpha\}$; and if $u_\alpha=0$, then $\calF(\Sigma_\calR^c)=\calF(\Sigma^c)$. 

The contraction $\cont_\calR$ is a Mori fibre space if and only if $u_\alpha=0$. Otherwise, $u_\alpha\neq 0$ and $\cont_\calR$ is birational. In this case, $\cont_\calR$ is divisorial if and only if $u_\alpha$ generates a non-coloured ray of $\Sigma^c$. When $\cont_\calR$ is not divisorial, we can use \cite[Section 4.4]{brion1993mori} to describe the flip. Note that $\Sigma_+^c$ lives on the coloured lattice $N$ since $u_\alpha\neq 0$ in this case. The underlying fan $\Sigma_+$ is obtained from $\Sigma$ by star subdividing through the point $u_\alpha$; see \cite[Section 11.1]{cox2011toric} for this star subdivision. For the colour set, there are two cases: if $u_\alpha$ is not on a ray of $\Sigma$, then $\calF(\Sigma_+^c)=\calF(\Sigma^c)\cup\{\alpha\}$; or if $u_\alpha$ is on a ray $\rho\in\Sigma$, then $\calF(\rho^c)=\{\alpha'\}$ for some $\alpha'\neq\alpha$, and we have $\calF(\Sigma_+^c)=(\calF(\Sigma^c)\cup\{\alpha\})\setminus\{\alpha'\}$.

\subsection{Wall curve contractions}\label{subsec:wall curve contractions}

Now suppose that $\calR$ is an extremal ray of $\NE(X)$ which is generated by a wall curve class $C_\mu$. Under the contraction $\cont_\calR:X\to X_\calR$, we follow \cite[Section 4.6]{brion1993mori} to describe the coloured fan $\Sigma_\calR^c$ associated to $X_\calR$.

If we completely forget about the colour structure, then the fan $\Sigma$ corresponds to a toric variety $X_\Sigma$. The wall $\mu\in\Sigma$ corresponds to an irreducible torus-invariant curve in $X_\Sigma$, and the corresponding curve class generates an extremal ray in $\NE(X_\Sigma)$. Contracting this extremal ray yields a toric variety which corresponds to the fan $\Sigma_\calR$, i.e. the underlying fan of $\Sigma_\calR^c$; see \cite[Section 15.4]{cox2011toric} for a description of the toric contraction and the fan $\Sigma_\calR$ (in their notation, this fan is denoted $\Sigma_0$). 

Since the toric MMP describes what $\Sigma_\calR$ looks like, it also describes the underlying lattice of the coloured lattice $N_\calR$. The universal colour set for $N_\calR$ is $\calC_\calR=\calC\setminus\calC_\Phi$ where $\Phi:N\to N_\calR$ is the coloured lattice map associated to $\cont_\calR:X\to X_\calR$; so $\calC_\Phi$ consists of colours in $\calF(\Sigma^c)$ whose colour points map to zero under $\Phi$. The colour set $\calF(\Sigma_\calR^c)$ is inherited from $\calF(\Sigma^c)$, i.e. it is $\calF(\Sigma^c)\cap\calC_\calR$. 

The contraction $\cont_\calR$ is birational (i.e. not a Mori fibre space) if and only if the corresponding toric contraction of $X_\Sigma$ is birational; see \cite[Proposition 15.4.5]{cox2011toric} for the different types of toric MMP contractions. In this case, $\cont_\calR$ is divisorial if and only if the corresponding toric contraction of $X_\Sigma$ is divisorial. When $\cont_\calR$ is not divisorial, the underlying fan $\Sigma_+$ is the same as the fan corresponding to the flip of the toric variety $X_\Sigma$, and the colour set $\calF(\Sigma_+^c)$ is inherited from $\calF(\Sigma^c)$ so that $\Sigma_+^c$ is simplicial. That is, $\calF(\Sigma_+^c)$ is the same as $\calF(\Sigma^c)$ except we remove all $\alpha\in\calF(\Sigma^c)$ such that $u_\alpha$ is not on a ray of $\Sigma_+$.

\subsection{Flip tower}\label{subsec:flip tower}

Suppose that $\cont_\calR:X\to X_\calR$ is a flipping contraction, and let $\psi:X\dashrightarrow X_+$ denote the flip. Let $\Sigma_*^c$ be the coloured fan on $N$ defined as follows. If $\cont_\calR$ is a colour curve contraction of $C_{\alpha,\sigma^c}$, then let $\Sigma_*^c$ be the same as $\Sigma_+^c$ except that we remove $\alpha$ from the colour set. If $\cont_\calR$ is a wall curve contraction of $C_\mu$, then let $\Sigma_*^c$ be given as follows: the underlying fan is the fan from \cite[Lemma 15.5.7]{cox2011toric}, and the colour set is that of $\Sigma_+^c$. 

In any case, $\Sigma_*$ is the star subdivision of $\Sigma$ and of $\Sigma_+$ through some primitive lattice point $u_*\in N$. If $\cont_\calR$ is a colour curve contraction corresponding to the colour $\alpha$, then $du_*=u_\alpha$ for some $d\in\Z_{>0}$; for notation, let $d:=1$ if $\cont_\calR$ is a wall curve contraction. Note that $(\Cone(u_*),\varnothing)\in\Sigma_*^c$ is a non-coloured ray. It follows that the identity map on $N$ is compatible with $\Sigma_*^c$ and $\Sigma^c$, and with $\Sigma_*^c$ and $\Sigma_+^c$. Therefore, if $X_*$ denotes the horospherical $G/H$-variety corresponding to $\Sigma_*^c$, then we have a commutative diagram of horospherical morphisms
\begin{equation}\label{eq:flip tower}
	\begin{tikzcd}
		& {X_*} \\
		X && {X_+} \\
		& {X_\calR}
		\arrow["{\theta_+}", from=1-2, to=2-3]
		\arrow["\theta"', from=1-2, to=2-1]
		\arrow["{\cont_\calR}"', from=2-1, to=3-2]
		\arrow[from=2-3, to=3-2]
		\arrow["\psi", dashed, from=2-1, to=2-3]
	\end{tikzcd}
\end{equation}
Moreover, $\theta$ and $\theta_+$ are isomorphisms away from the exceptional locus $\Exc(\psi)$. 

We prove the following result which generalizes \cite[Lemma 15.5.7]{cox2011toric} for toric varieties. Let $C\subseteq X$ be a curve which is contracted by $\cont_\calR$. 

\begin{lemma}\label{lemma:flip tower divisor pullbacks}
	Consider the setup above for \eqref{eq:flip tower}. Let $D_*$ denote the $G$-invariant prime divisor in $X_*$ which corresponds to the new non-coloured ray $(\Cone(u_*),\varnothing)\in\Sigma_*^c$. Then, for all $\Q$-divisors $\delta$ on $X$ and $\delta_+:=\psi_*\delta$, we have 
	\begin{align*}
		\theta^*\delta = \theta_+^*\delta_+ - \frac{\delta\cdot C}{d}D_*.
	\end{align*}
	
	\begin{proof}
		Both $X_*$ and $X$ are horospherical $G/H$-varieties, and the map $\theta:X_*\to X$ is a proper birational horospherical morphism which is induced by the identity $N\to N$. Therefore, $\theta^*\delta$ and $\delta$ have the same associated piecewise linear function $\varphi:N_\R\to \R$. Let $\varphi_+:N_\R\to\R$ denote the piecewise linear function for $\delta_+$. 
		
		We may write $\delta=\sum_{(\rho,\varnothing)} a_\rho D_\rho + \sum_{\alpha\in\calC} a_\alpha D_\alpha$ for some $a_\rho,a_\alpha\in\Z$, where the first sum is over all non-coloured rays $(\rho,\varnothing)$ in $\Sigma^c$. Note that the rays of $\Sigma$ and $\Sigma_*$ are the same, except that $\Sigma_*$ has a new ray generated by $u_*$. For each non-coloured ray $(\rho,\varnothing)\in\Sigma^c(1)$, let $D_\rho^*$ denote the $G$-invariant prime divisor in $X_*$ corresponding to $\rho$; and for each colour $\alpha\in\calC$, let $D_\alpha^*$ denote the colour divisor in $X_*$ corresponding to $\alpha$. Then
		\begin{align*}
			\theta^*\delta = \sum_{(\rho,\varnothing)} a_\rho D_\rho^* + \varphi(u_*)D_* + \sum_{\alpha\in\calC} a_\alpha D_\alpha^*
		\end{align*}
		and
		\begin{align*}
			\theta_+^*\delta_+ = \sum_{(\rho,\varnothing)} a_\rho D_\rho^* + \varphi_+(u_*)D_* + \sum_{\alpha\in\calC} a_\alpha D_\alpha^*.
		\end{align*}
		Thus, if suffices to prove that $\varphi_+(u_*)-\varphi(u_*)=\frac{\delta\cdot C}{d}$. 
		
		If it is a wall curve contraction, then we take $d=1$, and the equality $\varphi_+(u_*)-\varphi(u_*)=\delta\cdot C$ follows from the argument in \cite[Lemma 15.5.7]{cox2011toric}. On the other hand, if it is a colour curve contraction corresponding to $\alpha$, then $du_*=u_\alpha$ and $\varphi_+(u_\alpha)=a_\alpha$ because $\alpha\in\calF(\Sigma_+^c)$, so we have
		\begin{align*}
			\varphi_+(u_*)-\varphi(u_*) = \frac{\varphi_+(u_\alpha)-\varphi(u_\alpha)}{d} = \frac{a_\alpha-\varphi(u_\alpha)}{d} = \frac{\delta\cdot C}{d}
		\end{align*}
		where the last equality follows from \eqref{eq:colour curve class}. 
	\end{proof}
\end{lemma}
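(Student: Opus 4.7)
The plan is to reduce the claimed equality to a comparison of piecewise linear functions at the single lattice point $u_*$, and then verify this comparison separately in the wall and colour cases. Both $\theta$ and $\theta_+$ are proper birational horospherical morphisms induced by the identity map on $N$, so pullbacks along them can be computed directly from the piecewise linear functions $\varphi, \varphi_+: N_\R \to \R$ associated to $\delta$ and $\delta_+$, using the review in Section \ref{sec:review of horospherical MMP}. In particular, $\theta^*\delta$ and $\delta$ correspond to the same piecewise linear function $\varphi$ on $N_\R$, and similarly for $\theta_+^*\delta_+$ and $\delta_+$.

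First, I would write $\delta = \sum_{(\rho,\varnothing)} a_\rho D_\rho + \sum_\alpha a_\alpha D_\alpha$ and expand $\theta^*\delta$ as a formal sum of $B^-$-invariant prime divisors on $X_*$. Since the rays of $\Sigma_*$ are exactly those of $\Sigma$ together with the new ray $\Cone(u_*)$, the coefficient of $D_*$ in $\theta^*\delta$ is $\varphi(u_*)$, while the coefficients on the other divisors $D_\rho^*, D_\alpha^*$ simply inherit from $\delta$. An analogous expansion for $\theta_+^*\delta_+$ yields a coefficient $\varphi_+(u_*)$ on $D_*$. The key observation is that $\delta_+ = \psi_*\delta$ is the strict transform, so the coefficients of $\delta_+$ on the prime divisors of $X_+$ that correspond (via $\Sigma^c$ vs.\ $\Sigma_+^c$) to the same $B^-$-invariant prime divisors of $X$ agree with those of $\delta$. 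Careful bookkeeping of the two cases of colour contractions (where $u_\alpha$ either becomes a new ray or lands on an existing ray of $\Sigma_+$) and of wall contractions (where the set of rays is unchanged) shows that the expansions of $\theta^*\delta$ and $\theta_+^*\delta_+$ agree on every divisor except possibly $D_*$. Thus the lemma reduces to showing $\varphi_+(u_*) - \varphi(u_*) = \frac{\delta \cdot C}{d}$.

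For the wall curve case, $d=1$ and $u_*$ is the primitive lattice point introduced by the toric star subdivision that realizes the flip on the underlying fan. The corresponding toric version of the identity is precisely the content of \cite[Lemma 15.5.7]{cox2011toric}, and I would invoke it directly to conclude $\varphi_+(u_*) - \varphi(u_*) = \delta \cdot C$. For the colour curve case corresponding to a colour $\alpha$, the relations $u_\alpha = d u_*$ and $\alpha \in \mathcal{F}(\Sigma_+^c)$ give $\varphi_+(u_\alpha) = a_\alpha$, and since both $\varphi, \varphi_+$ are linear along the ray through $u_\alpha$,
\begin{align*}
    \varphi_+(u_*) - \varphi(u_*) = \tfrac{1}{d}\bigl(\varphi_+(u_\alpha) - \varphi(u_\alpha)\bigr) = \tfrac{1}{d}\bigl(a_\alpha - \varphi(u_\alpha)\bigr) = \tfrac{\delta\cdot C}{d},
\end{align*}
where the final equality is the formula \eqref{eq:colour curve class} defining the colour curve class.

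The main obstacle I expect is the bookkeeping in the second step: verifying that the coefficients on divisors other than $D_*$ really do match between $\theta^*\delta$ and $\theta_+^*\delta_+$. This hinges on careful case analysis of how $\Sigma_+^c$ differs from $\Sigma^c$ (particularly for colour contractions, where a colour can move in or out of $\mathcal{F}$, and where the divisor labelled $D_\alpha$ on $X$ may be identified with a divisor of a different type on $X_+$). Once this identification is in place, the wall case quotes a known toric result and the colour case is a one-line linearity computation, so there is no serious remaining difficulty.
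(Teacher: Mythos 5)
Your proof is correct and takes essentially the same route as the paper: reduce via the piecewise linear function expansions on $X_*$ to the single-point identity $\varphi_+(u_*)-\varphi(u_*)=\frac{\delta\cdot C}{d}$, then quote \cite[Lemma 15.5.7]{cox2011toric} in the wall case and use linearity along $\Cone(u_*)$ together with $\alpha\in\calF(\Sigma_+^c)$ and \eqref{eq:colour curve class} in the colour case. The ``bookkeeping'' you flag as a possible obstacle is unproblematic: since a flip is an isomorphism in codimension one, $\psi_*$ preserves all divisor coefficients, so $\varphi$ and $\varphi_+$ agree on every coloured and non-coloured ray common to $\Sigma^c$ and $\Sigma_+^c$, and the only discrepancy is on $\Cone(u_*)$.
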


\section{Preliminary reductions in the proof of Theorem \ref{thm:main theorem}}\label{sec:preliminary reductions in the proof of theorem}

The goal of this paper is to prove the results in Section \ref{sec:introduction}, i.e. we need to prove Proposition \ref{prop:Vojta->canboudned} and Theorem \ref{thm:main theorem}. To start, we quickly prove Proposition \ref{prop:Vojta->canboudned}. 

\begin{proof}[{Proof of Proposition \ref{prop:Vojta->canboudned}}]
	By \cite[Proposition 0.1]{satriano2023erratum-approximating}, we need only show $-K_X$ is big. This is well-known. Choose an ample divisor $A$ on $X$. We have $-K_X=\sum_{D\in\calD(X)} b_D D$ where $b_D\in\Z_{>0}$ (see Remark \ref{rmk:reinterpretation of omega}), and we can write $A=\sum_{D\in\calD(X)} a_D D$ for some $a_D\in\Z$. By choosing $m\in\Z_{>0}$ large enough, we can achieve $\frac{a_D}{m}<b_D$ for each $D\in\calD(X)$. Hence, $-K_X-\frac{1}{m}A$ is effective, so $-K_X$ is big.
\end{proof}

Now we turn our attention to proving Theorem \ref{thm:main theorem}. In this section, we reduce the proof to a simpler case where $p$ is in the exceptional locus of an MMP contraction. This simpler case is stated in Proposition \ref{prop:MMP final step} and proved in Section \ref{sec:proof of proposition MMP final step}.

By \cite[Proposition 4.1]{satriano2021approximating} (which is stated for toric varieties, but holds for horospherical varieties), to prove Theorem \ref{thm:main theorem}, we may assume that the (split) horospherical variety $X$ is projective, $\Q$-factorial, and has at worst terminal singularities. Thus, it suffices to prove the following theorem.

\begin{theorem}\label{thm:main theorem with Q-factorial reduction}
	Let $X$ be a projective terminal $\Q$-factorial horospherical variety, let $p\in X(k)$, and let $D$ be a nef $\Q$-divisor on $X$. If $p$ is canonically bounded, then there exists an irreducible curve $C$ through $p$ which is unibranch at $p$ and
	\begin{align*}
		\alpha_{p,C}(D) \leq \alpha_{p,\{x_i\}}(D)
	\end{align*}
	for all Zariski dense sequences $\{x_i\}\subseteq X(k)$. Moreover, if $X$ is not isomorphic to projective space, then we may choose $C$ so that $-K_X\cdot C\leq \dim(X)$. 
\end{theorem}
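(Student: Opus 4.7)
The plan is to proceed by induction using termination of the horospherical minimal model program (MMP), from \cite{brion1993mori,pasquier2018log}, reducing the theorem to the base case handled by Proposition~\ref{prop:MMP final step}. Given $X$, $p$, $D$ as in the statement, I would choose an extremal ray $\calR \subseteq \NE(X)$ with contraction $\cont_\calR \colon X \to X_\calR$. If $\cont_\calR$ is a Mori fibre space, or $\cont_\calR$ is birational and $p \in \Exc(\cont_\calR)$, then Proposition~\ref{prop:MMP final step} directly produces an irreducible curve $C$ through $p$, unibranch at $p$, satisfying the approximation-constant inequality and the sharp bound $-K_X \cdot C \leq \dim(X)$ (when $X \not\cong \PP^n$).

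Suppose instead $p \notin \Exc(\cont_\calR)$ and $\cont_\calR$ is a divisorial contraction. Then $\cont_\calR$ is a local isomorphism at $p$, so $p$ descends to $p' \in X_\calR(k)$ with $X_\calR$ again projective, terminal, $\Q$-factorial, and horospherical, and $\rho(X_\calR) = \rho(X) - 1$. I would verify (a) canonical boundedness of $p'$, by lifting Zariski dense sequences through the local isomorphism and comparing anticanonical heights via the discrepancy $K_X = \cont_\calR^* K_{X_\calR} + aE$ with $a > 0$, and (b) nefness of $D_\calR := (\cont_\calR)_*D$. Applying the induction hypothesis on $\rho$, one obtains a curve $C_\calR \subseteq X_\calR$ through $p'$; taking $C$ to be its strict transform, unibranchness transfers, and the comparison $\alpha_{p,\{x_i\}}(D) = \alpha_{p',\{\cont_\calR(x_i)\}}(D_\calR)$ (valid because $\cont_\calR$ is a local isomorphism at $p$ and the exceptional contribution is bounded) reduces the inequality to the one on $X_\calR$. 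The bound $-K_X \cdot C \leq -K_{X_\calR} \cdot C_\calR \leq \dim X$ follows from $-K_X = \cont_\calR^*(-K_{X_\calR}) - aE$ with $a > 0$ and $E \cdot C \geq 0$ (since $C \not\subseteq E$).

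If instead $\cont_\calR$ is a flipping contraction and $p \notin \Exc(\cont_\calR)$, the flip $\psi \colon X \dashrightarrow X_+$ is a local isomorphism at $p$, sending $p$ to $p_+ \in X_+(k)$; here $\rho(X_+) = \rho(X)$, but the induction is powered by termination of flips in the horospherical MMP. Transfer of canonical boundedness, nefness of $\psi_*D$, and comparison of approximation constants and anticanonical degrees proceed analogously, with the diagram~\eqref{eq:flip tower} and Lemma~\ref{lemma:flip tower divisor pullbacks} providing the book-keeping for divisors across the flip.

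The main obstacle I anticipate is the careful transfer of approximation constants across the birational maps in the non-base cases, while preserving the sharper anticanonical bound $-K_X \cdot C \leq \dim X$. One has to show that on a $v$-adic neighbourhood of $p$ the distance function and height agree, up to bounded factors, with their analogues on $X_\calR$ or $X_+$, so that the approximation constants are preserved and the inequality reduces cleanly to the inductive case. Handling $\PP^n$ as a genuine edge case (where the sharper bound is not required) and checking that canonical boundedness is not destroyed by an MMP step which is a local isomorphism at $p$ are technical but direct consequences of the $K$-negativity of MMP contractions, leaving Proposition~\ref{prop:MMP final step} as the genuine content of the argument.
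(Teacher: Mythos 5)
Your high-level plan---run the horospherical MMP and reduce to Proposition~\ref{prop:MMP final step}---is the right one and matches the paper's strategy. But there is a genuine gap in the reduction step: you attempt to push a general nef $\Q$-divisor $D$ directly across a birational MMP step via $D_\calR := (\cont_\calR)_*D$ and then assert $\alpha_{p,\{x_i\}}(D) = \alpha_{p',\{\cont_\calR(x_i)\}}(D_\calR)$. This fails unless $D\in\calR^\perp$. For instance, across a divisorial contraction $\pi$ with exceptional divisor $E$, one has $\pi^*\pi_*D = D + cE$ with $c>0$ whenever $D$ is strictly positive on the contracted ray, so the height functions $H_D$ and $H_{\pi_*D}\circ\pi$ differ by an unbounded factor near $E$, and the approximation constants need not agree. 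Moreover, Proposition~\ref{prop:MMP final step} is stated only for $D\in\calR^\perp$, so in the base case you cannot invoke it for an arbitrary nef $D$ either.

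The paper repairs this before touching the MMP: given $D$, one sets $a=\min_i \frac{D\cdot C_i}{-K_X\cdot C_i}$ over the $K_X$-negative extremal curve classes $C_i$, so that $D+aK_X$ is nef and lies in $\calR_0^\perp$ for the minimizing ray $\calR_0$; one then runs the MMP contracting $\calR_0$ specifically, not an arbitrary ray. The passage from $D$ to $D+aK_X$ is justified by \cite[Lemma 4.3]{satriano2021approximating}, and after this normalization Lemma~\ref{lemma:flip tower divisor pullbacks} gives clean divisor comparisons across flips. Your proposal also does not isolate the edge case where $X_\calR\cong\PP^n$ while $X\not\cong\PP^n$ (handled in the paper via \cite[Lemma 5.1]{satriano2021approximating}); there the ``moreover'' bound is not available on $X_\calR$, so your estimate $-K_X\cdot C\leq -K_{X_\calR}\cdot C_\calR\leq\dim X$ does not close without a separate argument. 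In short, you have the correct skeleton (the paper's Propositions~\ref{prop:MMP reduction} and \ref{prop:MMP reduction-special-case}), but the missing normalization $D\rightsquigarrow D+aK_X$ is load-bearing, not cosmetic.
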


Let $X$ be a projective $\Q$-factorial horospherical $G/H$-variety which is canonically bounded at $p\in X(k)$. Let $D$ be a nef $\Q$-divisor on $X$. Let $[C_0],\ldots,[C_\ell]$ be the curve classes which generate the $K_X$-negative extremal rays of $\NE(X)$. Set 
\begin{align}\label{eq:a is minimum of divisor dot curves}
	a := \min_{0\leq i\leq \ell} \frac{D\cdot C_i}{-K_X\cdot C_i}
\end{align}
and, without loss of generality, say $a=\frac{D\cdot C_0}{-K_X\cdot C_0}$. By construction, $D+aK_X$ is nef. By \cite[Lemma 4.3]{satriano2021approximating}, to prove Theorem \ref{thm:main theorem with Q-factorial reduction} for $D$, it suffices to prove the theorem for $D+aK_X$. 

The benefit to working with $D+aK_X$ is that $(D+aK_X)\cdot C_0=0$. Let $\pi:X\to Y$ denote the contraction corresponding to the extremal ray $\R_{\geq 0} C_0\subseteq \NE(X)$. If $\pi$ is a Mori fibre space or a divisorial contraction, then there exists a nef $\Q$-divisor $D'$ on $Y$ such that $D+aK_X=\pi^*D'$. On the other hand, if $\pi$ is a flipping contraction, then let $\psi:X\dashrightarrow X_+$ denote the associated flip. If we let $D':=\psi_*(D+aK_X)$, then Lemma \ref{lemma:flip tower divisor pullbacks} tells us that $\theta^*(D+aK_X)=\theta_+^*D'$, where we are using the notation of \eqref{eq:flip tower}. Since $\theta$ and $\theta_+$ are proper and surjective, $D+aK_X$ being nef implies that $\theta^*(D+aK_X)$ is nef, which implies that $D'$ is nef. 

To simplify the notation, we let $\psi:X\dashrightarrow X'$ denote the elementary MMP step corresponding to the extremal ray $\R_{\geq 0} C_0$, i.e. if $\pi$ is a Mori fibre space or a divisorial contraction, then we let $X':=Y$ and $\psi:=\pi$; on the other hand, if $\pi$ is a flipping contraction, then we let $\psi$ denote the associated flip and $X':=X_+$. In any case, we have shown that there is a nef $\Q$-divisor $D'$ on $X'$ for which $D+aK_X=\psi^*D'$. 

In Proposition \ref{prop:MMP reduction-special-case} below, we show that the proof of Theorem \ref{thm:main theorem with Q-factorial reduction} reduces to the case where $p\in\Exc(\psi)$. Before proving this reduction, we require the following two preliminary results. 

\begin{proposition}\label{prop:MMP preserves canonically bounded}
	Let $X$ be a projective terminal $\Q$-factorial horospherical variety, and let $\psi\colon X\dashrightarrow X'$ be a birational elementary MMP step. If $p\in X(k)\setminus \Exc(\psi)$ is a canonically bounded point of $X$, then $\psi(p)$ is a canonically bounded point of $X'$.
	
	\begin{proof}
		This follows from an almost identical proof of \cite[Proposition 4.8]{satriano2021approximating}. Note that this proof relies on \cite[(4.5),(4.6)]{satriano2021approximating}, which are generalized to horospherical varieties in \eqref{eq:flip tower} and Lemma \ref{lemma:flip tower divisor pullbacks}. 
	\end{proof} 
\end{proposition}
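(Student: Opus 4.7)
The strategy, paralleling the toric argument of \cite[Proposition 4.8]{satriano2021approximating}, is to transport any Zariski dense sequence $\{y_i\}\subseteq X'(k)$ with $y_i\to\psi(p)$ back to a Zariski dense sequence $\{x_i\}\subseteq X(k)$ converging to $p$, in such a way that the $v$-adic distances and the $(-K)$-heights match up to bounded multiplicative factors. Since $p\notin\Exc(\psi)$, the map $\psi$ restricts to an isomorphism of Zariski open neighbourhoods $U\ni p$ and $U'\ni\psi(p)$; by discarding the finitely many $y_i$ outside $U'$ and passing to a subsequence, we may set $x_i:=\psi^{-1}(y_i)\in U(k)$. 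Then $x_i\to p$, and $\{x_i\}$ remains Zariski dense in $X$: if it were contained in a proper closed subset $W\subsetneq X$, then $\{y_i\}$ would be contained in $\psi(W)\cup(X'\setminus U')$, which is a proper closed subset of $X'$.

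For the height comparison, note that $\psi$ contracts the $K_X$-negative extremal ray $\mathbb{R}_{\geq 0}C_0$, so $-K_X\cdot C_0>0$. In the divisorial case $\psi\colon X\to X'$ is a morphism and $\psi^*(-K_{X'})=-K_X+aE$ for an effective exceptional divisor $E$; since $p\notin E$, $H_E(x_i)$ is bounded and hence $H_{-K_{X'}}(y_i)$ and $H_{-K_X}(x_i)$ differ by a bounded factor. In the flipping case we use the diagram \eqref{eq:flip tower}: lift $x_i$ to $\tilde{x}_i\in X_*(k)$ via $\theta$ (an isomorphism away from $\Exc(\psi)$), so that $\theta_+(\tilde{x}_i)=y_i$. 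Lemma \ref{lemma:flip tower divisor pullbacks} applied to $\delta=-K_X$ yields
\[
\theta^*(-K_X)\;=\;\theta_+^*(-K_{X'})\;-\;\frac{-K_X\cdot C_0}{d}\,D_*.
\]
Because $p\notin\Exc(\psi)$, the lifts $\tilde{x}_i$ stay in a neighbourhood of $\theta^{-1}(p)$ that avoids $D_*$, so $H_{D_*}(\tilde{x}_i)$ is bounded; applying functoriality of heights under $\theta$ and $\theta_+$ then gives $H_{-K_{X'}}(y_i)$ and $H_{-K_X}(x_i)$ comparable up to bounded factors. Locally near $p$ the distance functions also satisfy $\dist_v(p,x_i)$ and $\dist_v(\psi(p),y_i)$ comparable, since $\psi$ is a local isomorphism at $p$.

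Combining these comparisons shows $\alpha_{\psi(p),\{y_i\}}(-K_{X'})=\alpha_{p,\{x_i\}}(-K_X)$, and by canonical boundedness of $p$ this is at least $\dim X=\dim X'$. As $\{y_i\}$ was an arbitrary Zariski dense sequence on $X'$ converging to $\psi(p)$, we conclude that $\psi(p)$ is canonically bounded. The only genuinely new input compared to the toric proof is Lemma \ref{lemma:flip tower divisor pullbacks} and the diagram \eqref{eq:flip tower}, which are already in place; the remainder is a formal manipulation of heights and distances on a locus where $\psi$ is a local isomorphism, and I expect the main obstacle to be merely keeping track of the bookkeeping in the flipping case.
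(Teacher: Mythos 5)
Your reconstruction follows the same outline as the paper's proof, which transplants the toric argument of \cite[Proposition 4.8]{satriano2021approximating} by replacing the toric inputs with \eqref{eq:flip tower} and Lemma \ref{lemma:flip tower divisor pullbacks}. One step, though, is overstated: you claim $H_{-K_{X'}}(y_i)$ and $H_{-K_X}(x_i)$ differ by a \emph{bounded factor}, and deduce the equality $\alpha_{\psi(p),\{y_i\}}(-K_{X'})=\alpha_{p,\{x_i\}}(-K_X)$. That would require $H_E(x_i)$ (resp.\ $H_{D_*}(\tilde{x}_i)$) to be bounded above as well as below, but for an effective divisor the Weil height machine only gives boundedness from below away from the support; $h_E(x_i)$ can tend to $+\infty$ even as $x_i\to p$ $v$-adically, since that convergence constrains nothing at the other places of $k$. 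What you actually have---and what suffices---is the one-sided inequality $H_{-K_{X'}}(y_i)\geq c\,H_{-K_X}(x_i)$, coming from effectivity of $E$ (resp.\ $D_*$) together with positivity of the discrepancy (resp.\ of $-K_X\cdot C_0$). Combined with the comparability of distances, this yields $A(\{y_i\},-K_{X'})\subseteq A(\{x_i\},-K_X)$ and hence $\alpha_{\psi(p),\{y_i\}}(-K_{X'})\geq\alpha_{p,\{x_i\}}(-K_X)\geq\dim X$, which is all the proposition needs. With that one-sided replacement, the argument is correct and coincides with the paper's.
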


\begin{proposition}\label{prop:MMP reduction}
	Let $X$ be a projective terminal $\Q$-factorial horospherical variety, and let $\psi\colon X\dashrightarrow X'$ be a birational elementary MMP step corresponding to the extremal ray $\calR$. Let $D$ and $D'$ be nef $\Q$-divisors on $X$ and $X'$, respectively, such that $D\in \calR^\perp$ and $D=\psi^*D'$. If $p\in X(k)\setminus\Exc(\psi)$ is a canonically bounded point and Theorem \ref{thm:main theorem with Q-factorial reduction} holds for $(X',\psi(p),D')$, then it holds for $(X,p,D)$. 
	
	\begin{proof}
The proof is nearly the same as \cite[Proposition 4.9]{satriano2021approximating}, which is handled in \cite[Section 5]{satriano2021approximating}. Note that \cite[Lemma 5.1]{satriano2021approximating} is a statement about projective space; \cite[Lemma 5.2]{satriano2021approximating} holds for horospherical varieties; and \cite[Lemma 5.3]{satriano2021approximating} holds for horospherical varieties by \eqref{eq:flip tower} and Lemma \ref{lemma:flip tower divisor pullbacks}. 
	\end{proof}
\end{proposition}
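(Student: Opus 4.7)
The plan is to lift an optimal curve from $X'$ back to $X$ via the proper transform, and to use the pullback identity $D = \psi^\ast D'$ (direct in the divisorial contraction case; refined via \eqref{eq:flip tower} and Lemma \ref{lemma:flip tower divisor pullbacks} in the flip case) to compare approximation constants on $X$ and $X'$. Since $p \notin \Exc(\psi)$, the map $\psi$ is an isomorphism in a neighbourhood of $p$, and Proposition \ref{prop:MMP preserves canonically bounded} shows that $\psi(p)$ is canonically bounded on $X'$. Applying Theorem \ref{thm:main theorem with Q-factorial reduction} to $(X', \psi(p), D')$ produces an irreducible curve $C' \subseteq X'$ through $\psi(p)$ which is unibranch at $\psi(p)$ and satisfies $\alpha_{\psi(p), C'}(D') \le \alpha_{\psi(p), \{y_i\}}(D')$ for every Zariski dense sequence $\{y_i\} \subseteq X'(k)$; moreover, if $X' \not\cong \P^{\dim X'}$, then $-K_{X'} \cdot C' \le \dim X'$.

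I take $C \subseteq X$ to be the proper transform of $C'$, which passes through $p$ and is unibranch there because $\psi$ is a local isomorphism at $p$. In the divisorial case, $\psi$ is a morphism and $D = \psi^\ast D'$ gives the height equality $H_D(x) = H_{D'}(\psi(x))$ (up to bounded factors) on $X \setminus \Exc(\psi)$; in the flip case, since $D \in \calR^\perp$, Lemma \ref{lemma:flip tower divisor pullbacks} simplifies to $\theta^\ast D = \theta_+^\ast D'$, which yields the same height identity on the open locus where $\theta, \theta_+$ are isomorphisms. The $v$-adic distance to $p$ (respectively $\psi(p)$) is preserved up to bounded factors by the local isomorphism $\psi$ near $p$. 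These ingredients together yield the upper bound $\alpha_{p, C}(D) \le \alpha_{\psi(p), C'}(D')$ and, for any sequence $\{x_i\} \subseteq X(k)$ approximating $p$ that eventually lies outside $\Exc(\psi)$, also the reverse comparison $\alpha_{\psi(p), \{\psi(x_i)\}}(D') \le \alpha_{p, \{x_i\}}(D)$. Transferring the auxiliary bound $-K_X \cdot C \le \dim X$ from $-K_{X'} \cdot C' \le \dim X'$ is handled by noting that the flip and divisorial contraction are crepant outside a locus $C$ can be chosen to avoid near $p$, so the intersection numbers agree.

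The main obstacle is that the image $\{\psi(x_i)\}$ of a Zariski dense sequence in $X$ need not be Zariski dense in $X'$: in the divisorial case the image can collapse into a lower-dimensional locus, and in the flip case the image may degenerate through the indeterminacy of $\psi$. This is precisely the difficulty addressed in \cite[Section 5]{satriano2021approximating}, via a Noetherian induction on the support of $\{\psi(x_i)\}$ driven by the canonically bounded hypothesis at $p$. The three technical lemmas required there transfer to the horospherical setting without essential change, as the excerpt records: \cite[Lemma 5.1]{satriano2021approximating} is intrinsic to $\P^n$ (and accounts for the projective space exception in the theorem), \cite[Lemma 5.2]{satriano2021approximating} is intrinsic and requires no modification, and \cite[Lemma 5.3]{satriano2021approximating} reduces to a divisor-pullback computation now provided by \eqref{eq:flip tower} and Lemma \ref{lemma:flip tower divisor pullbacks}. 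Assembling these pieces exactly as in \cite[Section 5]{satriano2021approximating} completes the argument.
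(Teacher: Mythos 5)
Your proposal follows the paper's approach: both arguments reduce the claim to \cite[Section 5]{satriano2021approximating}, observing that Lemma 5.1 there is a statement intrinsic to projective space, Lemma 5.2 transfers unchanged to horospherical varieties, and Lemma 5.3 transfers via the flip tower \eqref{eq:flip tower} together with Lemma \ref{lemma:flip tower divisor pullbacks}; the passage of canonical boundedness along $\psi$ via Proposition \ref{prop:MMP preserves canonically bounded} is also the intended ingredient. One small correction to your wording on the transfer of the bound $-K_X\cdot C\leq\dim X$: terminal birational MMP steps are not crepant (the discrepancy along the exceptional divisor is strictly positive), and the proper transform $C$ is not free to ``avoid'' the exceptional locus. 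What actually makes the inequality work is that $C$ passes through $p\notin\Exc(\psi)$, hence $C$ is not contained in the exceptional locus, so the relevant exceptional intersection numbers are nonnegative; subtracting the (positive) discrepancy term then gives $-K_X\cdot C\leq -K_{X'}\cdot C'$ in the divisorial case, and the flip case follows analogously through $X_*$ from the coefficient of $D_*$ in Lemma \ref{lemma:flip tower divisor pullbacks}.
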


\begin{proposition}\label{prop:MMP reduction-special-case}
	Suppose that Theorem \ref{thm:main theorem with Q-factorial reduction} holds for all $(X,p,D)$ where $X$ is a projective terminal $\Q$-factorial horospherical variety, $\psi\colon X\dashrightarrow X'$ is an elementary MMP step corresponding to the extremal ray $\calR$, $D$ is a nef $\Q$-divisor on $X$ such that $D\in\calR^\perp$, and $p\in X(k)\cap \Exc(\psi)$ is a canonically bounded point. Then Theorem \ref{thm:main theorem with Q-factorial reduction} holds for all triples.
	
	\begin{proof}
		Start with a projective terminal $\Q$-factorial horospherical variety $X_1$, let $D_1$ be a nef $\Q$-divisor on $X_1$, and $p_1\in X_1(k)$ a canonically bounded point. Let $a_1$ be as in \eqref{eq:a is minimum of divisor dot curves}, so that $D_1+a_1K_{X_1}\in \calR_1^\perp$ for some $K_{X_1}$-negative extremal ray $\calR_1$ of $\NE(X_1)$. Now let $\psi\colon X_1\dashrightarrow X_2$ be the elementary MMP step associated to $\calR_1$, and let $D_2$ be the nef $\Q$-divisor on $X_2$ such that $D_1+a_1K_{X_1}=\psi_1^*D_2$. Repeating this procedure, we arrive at the following data: we have a sequence of elementary MMP steps
		\begin{equation*}
			\begin{tikzcd}
				{X_1} & {X_2} & \cdots & {X_{m+1}}
				\arrow["{\psi_1}", dashed, from=1-1, to=1-2]
				\arrow["{\psi_2}", dashed, from=1-2, to=1-3]
				\arrow["{\psi_m}", dashed, from=1-3, to=1-4]
			\end{tikzcd}
		\end{equation*}
		and a sequence of canonically bounded points $p_i\in X_i(k)$ such that $p_m\in\Exc(\psi_m)$, and $p_i\notin\Exc(\psi_i)$ and $p_{i+1}=\psi_i(p_i)$ for all $1\leq i<m$. Furthermore, for each $1\leq i\leq m$, we have a nef $\Q$-divisor $D_i$ on $X_i$, and we have $a_i\in\Q_{\geq 0}$ such that $D_i+a_iK_{X_i}=\psi_i^*D_{i+1}$ is a nef $\Q$-divisor perpendicular to the $K_{X_i}$-negative extremal ray corresponding to $\psi_i$. 
		
		By Proposition \ref{prop:MMP reduction}, the case of $(X_i,p_i,D_i+a_iK_{X_i})$ follows from that of $(X_{i+1},p_{i+1},D_{i+1})$. Therefore, to prove Theorem \ref{thm:main theorem with Q-factorial reduction} for the triple $(X_1,p_1,D_1)$, it suffices to prove it for the triple $(X_m,p_m,D_m)$ where we have $p_m\in X(k)\cap\Exc(\psi_m)$. 
	\end{proof}
\end{proposition}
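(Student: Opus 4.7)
The plan is to iteratively apply the MMP, starting from $(X_1, p_1, D_1) := (X, p, D)$, producing a chain of elementary MMP steps which must eventually force our canonically bounded point into an exceptional locus, at which point the hypothesis applies; I then propagate Theorem \ref{thm:main theorem with Q-factorial reduction} backward through the chain.

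More concretely, given $(X_i, p_i, D_i)$ at stage $i$, I would use formula \eqref{eq:a is minimum of divisor dot curves} to choose $a_i \in \Q_{\geq 0}$ minimizing $\frac{D_i \cdot C}{-K_{X_i} \cdot C}$ over classes $C$ generating $K_{X_i}$-negative extremal rays of $\NE(X_i)$. This selects an extremal ray $\calR_i$ with $D_i + a_i K_{X_i} \in \calR_i^\perp$, and by the discussion preceding Proposition \ref{prop:MMP reduction-special-case} there is an elementary MMP step $\psi_i \colon X_i \dashrightarrow X_{i+1}$ and a nef $\Q$-divisor $D_{i+1}$ on $X_{i+1}$ satisfying $D_i + a_i K_{X_i} = \psi_i^* D_{i+1}$ (using Lemma \ref{lemma:flip tower divisor pullbacks} when $\psi_i$ is a flipping step). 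If $p_i \notin \Exc(\psi_i)$, I would set $p_{i+1} := \psi_i(p_i)$, which is canonically bounded by Proposition \ref{prop:MMP preserves canonically bounded}, and iterate; otherwise I stop at $m := i$.

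The crucial step is verifying that this iteration halts at some $m$ with $p_m \in \Exc(\psi_m)$. Termination of the horospherical MMP is known (flips terminate by the combinatorial description in \cite{brion1993mori}, while divisorial contractions and Mori fibre spaces strictly decrease Picard rank or dimension), so the process cannot continue indefinitely with $p_i \notin \Exc(\psi_i)$. If the iteration reaches a Mori fibre space $\psi_m$, where every point lies in a positive-dimensional contracted fiber, then $p_m$ sits in the exceptional locus in the relevant sense; otherwise the halting condition is met by construction.

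Once $m$ is reached, the hypothesis of the current proposition gives Theorem \ref{thm:main theorem with Q-factorial reduction} for $(X_m, p_m, D_m)$. Proposition \ref{prop:MMP reduction} then yields the theorem for $(X_{m-1}, p_{m-1}, D_{m-1} + a_{m-1} K_{X_{m-1}})$, and \cite[Lemma 4.3]{satriano2021approximating} upgrades this to a statement for $(X_{m-1}, p_{m-1}, D_{m-1})$. Iterating this pair of reductions down to $i = 1$ concludes the argument. I expect the main obstacle to be cleanly justifying termination of the MMP iteration together with handling the Mori fibre space case at the final step, since Propositions \ref{prop:MMP preserves canonically bounded} and \ref{prop:MMP reduction} are phrased only for birational MMP steps.
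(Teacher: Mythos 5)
Your proposal is correct and takes essentially the same approach as the paper: iterate elementary MMP steps using the $a_i$ from \eqref{eq:a is minimum of divisor dot curves}, push the canonically bounded point forward via Proposition~\ref{prop:MMP preserves canonically bounded} while it avoids the exceptional locus, then propagate Theorem~\ref{thm:main theorem with Q-factorial reduction} backward via Proposition~\ref{prop:MMP reduction} combined with \cite[Lemma~4.3]{satriano2021approximating}. Your explicit justification of termination --- the horospherical MMP halts at a Mori fibre space (since $-K_X$ is big, one never reaches a minimal model), whose exceptional locus is the entire source, so the halting condition $p_m\in\Exc(\psi_m)$ is eventually forced --- is a detail the paper's proof leaves implicit, and is a worthwhile clarification.
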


In conclusion, we have reduced the proof of Theorem \ref{thm:main theorem with Q-factorial reduction} to proving the following.

\begin{proposition}\label{prop:MMP final step}
	Let $X$ be a projective terminal $\Q$-factorial horospherical variety, and let $\psi\colon X\dashrightarrow X'$ be an elementary MMP step corresponding to the extremal ray $\calR$. If $D$ is a nef $\Q$-divisor on $X$ such that $D\in\calR^\perp$ and $p\in X(k)\cap \Exc(\psi)$ is a canonically bounded point, then Theorem \ref{thm:main theorem with Q-factorial reduction} holds for $(X,p,D)$. 
\end{proposition}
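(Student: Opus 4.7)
The plan is to exploit the hypothesis $D\in\calR^\perp$, which forces $D\cdot C=0$ for any curve $C$ with class proportional to the generator of $\calR$: if we can produce such a curve through $p$, then the $D$-height will be bounded along its rational points, giving the smallest possible approximation constant (zero).

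First, I would produce an irreducible rational curve $C$ through $p$ whose class is a positive multiple of the generator of $\calR$. The ray $\calR$ is generated by a $B^-$-invariant curve $C_0\cong\P^1$ of class $C_\mu$ or $C_{\alpha,\sigma^c}$, as recalled in Sections \ref{subsec:colour curve contractions} and \ref{subsec:wall curve contractions}. Since $\psi$ is an elementary MMP step, $\Exc(\psi)$ is $G$-invariant and swept out by curves whose classes lie in $\calR$; combinatorially, one can read off from $\Sigma^c$ which orbit closures cover $\Exc(\psi)$ and check that the $G$-translates of $C_0$ cover all such orbits. Translating $C_0$ by an appropriate $g\in G$ (or, if necessary, taking a component of a limiting family of translates) yields an irreducible rational curve $C$ with $p\in C$ and $[C]=m[C_0]$ for some $m\in\Z_{>0}$. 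By choosing the translate generically in the $G$-orbit of $p$, one can also arrange that the normalization $\widetilde C\cong\P^1\to C$ is injective over $p$, so that $C$ is unibranch at $p$.

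Next, since $D\cdot C = m(D\cdot C_0)=0$ and $\widetilde C\cong\P^1$, the pullback of $D$ to $\widetilde C$ is a $\Q$-divisor of degree zero, hence torsion in $\Pic(\widetilde C)_\Q$. Consequently, the height $H_D$ is bounded on $C(k)$, up to a multiplicative constant. For any sequence $\{x_i\}\subseteq C(k)$ of distinct points converging to $p$ (which exist in abundance since $C\cong\P^1$ over $k$), boundedness of $H_D(x_i)$ combined with $\dist_v(p,x_i)\to 0$ gives $A(\{x_i\},D)=[0,\infty)$, hence $\alpha_{p,\{x_i\}}(D)=0$ and therefore $\alpha_{p,C}(D)=0$. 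On the other hand, for any Zariski dense sequence $\{x_i\}\subseteq X(k)$ converging to $p$, the heights satisfy $H_D(x_i)\geq 1$ eventually, which forces $\alpha_{p,\{x_i\}}(D)\geq 0$. Combining these gives $\alpha_{p,C}(D)=0\leq\alpha_{p,\{x_i\}}(D)$, as required.

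Finally, for the bound $-K_X\cdot C\leq\dim X$ when $X\not\cong\P^n$, I would combine the explicit intersection formulas \eqref{eq:wall curve class} and \eqref{eq:colour curve class} applied to $-K_X=\sum_{D\in\calD(X)}b_D D$ with the classical length bound on $K_X$-negative extremal rays, together with a combinatorial argument that the extremal case $-K_X\cdot C=\dim X+1$ can only be realised on the coloured fan when $X\cong\P^n$. The main obstacle I anticipate is the first step: showing that an arbitrary $k$-rational point $p$ of $\Exc(\psi)$, which a priori may not lie on any $B^-$-invariant curve, still lies on a unibranch rational curve with class in $\calR$. This requires a careful case analysis driven by the combinatorial description of colour curve, wall curve, and flipping contractions in Section \ref{sec:review of horospherical MMP}, together with a transitivity or covering argument for the action of $G$ on the orbit closures making up $\Exc(\psi)$.
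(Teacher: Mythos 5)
Your proposal correctly identifies the high-level strategy that the exceptional locus of $\psi$ should contain a rational curve $C$ through $p$ with $D\cdot C = 0$, and the ensuing numerics ($H_D$ bounded on $C(k)$, hence $\alpha_{p,C}(D) = 0 \leq \alpha_{p,\{x_i\}}(D)$) are sound. However, the two places you flag as needing elaboration are precisely where the paper's real work lies, and the gap is genuine rather than a matter of routine bookkeeping.

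First, the curve construction. Your claim that the $G$-translates of a single $B^-$-invariant curve $C_0$ with $[C_0]\in\calR$ sweep out all of $\Exc(\psi)$ is false in general: the union of such translates is a $G$-invariant subvariety of dimension at most $1 + \dim G$, typically properly contained in $\Exc(\psi)$, and a point $p$ in a small $G$-orbit of a fiber need not lie on any translate of $C_0$. Moreover, taking components of limiting families of translates can produce reducible cycles, so a component need not again have class in $\calR$. The paper circumvents this by isolating the reduced fiber $F$ through $p$, proving (Lemma \ref{lemma:fibres have Picard number 1}) that $F$ is itself a projective $\Q$-factorial horospherical variety of Picard rank $1$, and then running an induction on $\rank(N)$ over the $G$-orbit closures of $F$ (Lemma \ref{lemma:divisor-curve bound}): one uses a one-parameter-subgroup orbit closure for $p$ in the open orbit or the ``top'' boundary orbit, and otherwise passes to a proper orbit closure $Z\subsetneq F$ of smaller rank and recurses, using Lemma \ref{lemma:divisor-curve pairing on orbit closure} to control intersection numbers under restriction. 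This is a different and substantially more involved mechanism than a single translation argument.

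Second, the bound $-K_X\cdot C\leq\dim X$ for $X\not\cong\PP^n$ is not a byproduct of the length bound on $K_X$-negative extremal rays: that bound only produces \emph{some} rational curve of $-K_X$-degree $\leq\dim X + 1$ in the ray, not one through an arbitrary $p$, and the $+1$ slack must be shaved off. The paper obtains the sharpened bound by combining the curve from Lemma \ref{lemma:divisor-curve bound} (which satisfies $D'\cdot C\leq 1$ for all $B^-$-invariant prime divisors $D'$ of the fiber) with Remark \ref{rmk:reinterpretation of omega} to express $-K_F$ in these divisors, and then invoking the root inequality of Proposition \ref{prop:root inequality} — the content of all of Section \ref{sec:an inequality on positive roots} — together with its equality characterization (equality iff $G/P$ is a product of projective spaces); finally Lemma \ref{lemma:canonical divisor of fibre inequality} lifts the bound from $F$ to $X$. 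None of this is subsumed by a ``combinatorial argument that the extremal case can only be realised when $X\cong\PP^n$,'' and the root inequality in particular is a nontrivial computation across Dynkin types that you do not gesture at. So while your outline captures the right final inequality, the two acknowledged obstacles are the proof, and filling them requires the Picard-rank-$1$ structure theory and the positive-roots inequality that occupy Sections \ref{sec:an inequality on positive roots}--\ref{sec:results on Picard number 1 horospherical varieties}.
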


\section{An inequality on positive roots}\label{sec:an inequality on positive roots}

This section establishes an inequality that is used throughout the proof of Proposition \ref{prop:MMP final step}. Recall that the parabolic subgroup $P\subseteq G$ corresponds uniquely to a subset $I\subseteq S$, where $S$ is the set of simple roots for $G$ (see \cite[Section 2.4]{monahan2023overview} for a review of this correspondence), and recall that $\calC:=S\setminus I$. Let $R^+$ denote the set of positive roots in $G$ (relative to $B$ and $T$), and let $R_I$ denote the set of roots generated by $I$. In this section, we use $\langle\cdot,\cdot\rangle$ to denote the dual pairing between roots and dual roots (see \cite[Page 6]{pasquier2009introduction} for a quick reference on this dual pairing). We prove the following result. 

\begin{proposition}\label{prop:root inequality}
	Using the notation immediately above, we have the following inequality:
	\begin{align}\label{eq:root inequality}
		\sum_{\alpha\in\calC}\sum_{\beta\in R^+\setminus R_I} \langle \beta,\alpha^\vee\rangle \leq \#\calC + \#(R^+\setminus R_I).
	\end{align}
	Moreover, this is equality if and only if $G/P$ is a product of projective spaces.
\end{proposition}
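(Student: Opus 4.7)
My plan is to first convert \eqref{eq:root inequality} into a counting inequality between two finite subsets of the root system, and then establish the reduced statement by constructing an explicit injection.

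Using the standard identity $\sum_{\beta \in R^+} \langle \beta, \alpha^\vee\rangle = 2$ for every simple root $\alpha$ (which follows from $\sum_{\beta\in R^+}\beta = 2\rho$ and $\langle\rho,\alpha^\vee\rangle = 1$), and splitting $R^+ = (R^+\setminus R_I) \sqcup R_I^+$, I would rewrite the left-hand side of \eqref{eq:root inequality} as
\begin{align*}
\sum_{\alpha \in \calC} \sum_{\beta \in R^+ \setminus R_I} \langle \beta, \alpha^\vee \rangle = 2\#\calC + Q, \qquad Q := -\sum_{\alpha \in \calC}\sum_{\beta \in R_I^+} \langle \beta, \alpha^\vee \rangle.
\end{align*}
For $\alpha \in \calC$ and $\beta \in R_I^+$, the element $\beta - \alpha$ is not a root (its simple-root expansion has mixed signs, since $\alpha \notin I$ while $\beta$ is supported in $I$). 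By the $\alpha$-string through $\beta$ we get $\langle \beta, \alpha^\vee\rangle = -q(\beta,\alpha) \leq 0$, where $q(\beta,\alpha) \in \Z_{\geq 0}$ is the largest integer with $\beta + q\alpha \in R$. In particular $Q \geq 0$, and \eqref{eq:root inequality} becomes equivalent to the inequality
\begin{align*}
\#\calC + Q \leq \#(R^+ \setminus R_I).
\end{align*}

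The main step is to prove this by exhibiting an injection
\begin{align*}
\Phi \colon \calC \,\sqcup\, \bigl\{(\alpha, \beta, k) : \alpha \in \calC,\ \beta \in R_I^+,\ 1 \leq k \leq q(\beta, \alpha)\bigr\} \longrightarrow R^+ \setminus R_I,
\end{align*}
given by $\Phi(\alpha) := \alpha$ for $\alpha \in \calC$ and $\Phi(\alpha, \beta, k) := \beta + k\alpha$. Each image lies in $R^+ \setminus R_I$: for a triple input, $\beta + k\alpha$ is a positive root (a root by the $\alpha$-string property for $1 \leq k \leq q(\beta,\alpha)$, and positive as a nonnegative combination of simple roots) with $\alpha$-coefficient $k\geq 1$, placing it outside $R_I$. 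Injectivity follows by comparing $\calC$-coefficients: since each $\beta\in R_I^+$ has vanishing $\calC$-coefficients, the image $\beta+k\alpha$ has its unique nonzero $\calC$-coefficient equal to $k$ at $\alpha$, so $\alpha$, $k$, and thus $\beta$, are recoverable. Moreover, no triple can map to a simple root in $\calC$: $\beta + k\alpha = \alpha'$ with $\beta\in R_I^+$ forces $\alpha = \alpha'$, $k = 1$, and $\beta = 0$, contradicting $\beta\in R_I^+$.

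For the equality case, $\Phi$ is a bijection iff every $\gamma \in R^+ \setminus (R_I \cup \calC)$ satisfies both (i) $|\mathrm{supp}_\calC(\gamma)| = 1$ and (ii) $\gamma - c_\alpha(\gamma)\alpha \in R_I^+$ for the unique $\alpha \in \mathrm{supp}_\calC(\gamma)$. Condition (i) forces each connected component of the Dynkin diagram of $G$ to meet $\calC$ in at most one node, so $G/P$ splits as a product of Picard-rank-one flag varieties; and a case-by-case check on simple Lie types shows that (ii) further selects precisely the type-$A$ flag varieties with $\calC$ at an endpoint of the Dynkin diagram, which are exactly the projective spaces. Hence equality holds iff $G/P$ is a product of projective spaces. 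I expect this equality analysis to be the main obstacle: the inequality itself is immediate from the injection, but ruling out every exotic simple type in the bijectivity check requires a careful type-by-type argument, essentially using the classification of simple Lie algebras.
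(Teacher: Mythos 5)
Your proof of the inequality itself is correct and takes a genuinely different route from the paper. The paper first proves a monotonicity lemma (that $\omega$ strictly decreases as $I$ shrinks, using Lemma \ref{lemma:inequality for coefficients of flag canonical}), reducing to the case of a maximal $I \subsetneq S$, and then bounds $\langle\beta,\alpha^\vee\rangle$ by a type-by-type analysis over $\mathbf{A}_\ell$ through $\mathbf{D}_\ell$ together with a computer check for the exceptional types. By contrast, you rewrite the left-hand side using $\sum_{\beta\in R^+}\langle\beta,\alpha^\vee\rangle=2$, reducing \eqref{eq:root inequality} to the cardinality bound $\#\calC+Q\leq\#(R^+\setminus R_I)$ with $Q=\sum_{\alpha\in\calC}\sum_{\beta\in R_I^+}q(\beta,\alpha)$, and then establish it by a single, type-free injection $\Phi$. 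I checked the injection carefully: images $\beta+k\alpha$ are indeed positive roots outside $R_I$ (since $p(\beta,\alpha)=0$, the $\alpha$-string runs $\beta,\beta+\alpha,\dots,\beta+q\alpha$), injectivity follows by reading off the unique nonzero $\calC$-coefficient, and no triple can hit a simple root. This buys a uniform, classification-free proof of the inequality and gives a transparent combinatorial interpretation of both sides; what it does not reproduce is Lemma \ref{lemma:inequality for coefficients of flag canonical}, which the paper reuses later (in Lemma \ref{lemma:canonical divisor of fibre inequality}), and which your route does not yield as a byproduct.

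Two cautions on the equality case. First, your claim that condition (ii) in the connected, maximal-$I$ case selects \emph{only} type $\mathbf{A}_\ell$ with $\calC$ at an endpoint is imprecise: it also selects type $\mathbf{C}_\ell$ with $\calC$ at the short endpoint, where $G/P\cong\PP^{2\ell-1}$ (indeed there $c_{\beta,\alpha}\in\{1,2\}$ for every $\beta\in R^+\setminus R_I$ and $\beta-c_{\beta,\alpha}\alpha\in R_I^+$ always, so $\Phi$ is a bijection; one can confirm the resulting index is $2\ell=\dim+1$). Since that variety is also a projective space, the characterization ``equality iff $G/P$ is a product of projective spaces'' is unaffected, but the classification step should be stated accurately. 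Second, you flag the type-by-type bijectivity check as the remaining work but do not carry it out; the paper is similarly terse here (``easy to verify''), so this is not a gap so much as an unverified case analysis. With those caveats, your injection-based argument is sound and, for the inequality proper, cleaner than the paper's.
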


Throughout this section, we say that $\alpha\in S$ is in the \define{support} of $\beta\in R^+$ if $\alpha$ appears with nonzero coefficient when $\beta$ is expressed as a nonnegative integer combination of simple roots. Since the dual pairing of non-adjacent roots is trivial, to prove Proposition \ref{prop:root inequality} we may assume going forward that $G$ is semisimple and has a connected Dynkin diagram. Note that Proposition \ref{prop:root inequality} holds when $I=S$ since both sides are $0$ and $G/P$ is a point (which is an empty product of projective spaces), so we may assume $I\subsetneq S$. 

We must show that
\begin{align*}
	\omega(I) := \#\calC + \#(R^+\setminus R_I) - \sum_{\alpha\in\calC}\sum_{\beta\in R^+\setminus R_I} \langle \beta,\alpha^\vee\rangle
\end{align*}
is nonnegative. We start by proving that this holds for maximal subsets $I\subsetneq S$. 

\begin{lemma}\label{lemma:root inequality for maximal}
	Suppose that $I\subsetneq S$ is maximal. Then $\omega(I)\geq 0$. Moreover, we have $\omega(I)=0$ if and only if $G/P$ is a projective space. 
	
	\begin{proof}
		The following are the possible Dynkin diagram types for $G$: $\mathbf{A}_\ell$ for $\ell\geq 1$, $\mathbf{B}_\ell$ for $\ell\geq 3$, $\mathbf{C}_\ell$ for $\ell\geq 2$, $\mathbf{D}_\ell$ for $\ell\geq 4$, $\mathbf{E}_6$, $\mathbf{E}_7$, $\mathbf{E}_8$, $\mathbf{F}_4$, and $\mathbf{G}_2$. We prove the result in each case. 
		
		For each of the special cases $\mathbf{E}_6$, $\mathbf{E}_7$, $\mathbf{E}_8$, $\mathbf{F}_4$, and $\mathbf{G}_2$, one can check that $\omega(I)>0$ using a computer (the first-named author has posted code for GAP on GitHub \cite{monahan2023github-root}).
		
		It remains to consider types $\mathbf{A}_\ell$-$\mathbf{D}_\ell$. Since $I\subsetneq S$ is maximal, we have $\calC=\{\alpha\}$ for some $\alpha\in S$, and $R^+\setminus R_I$ consists of all positive roots $\beta$ with $\alpha$ in its support. So we must show 
		\begin{align}\label{eq:root inequality maximal case}
			\sum_{\beta\in R^+\setminus R_I} \langle \beta,\alpha^\vee\rangle \leq \#(R^+\setminus R_I)+1.
		\end{align}
		
		First, note that every $\beta\in R^+\setminus R_I$ can be written as $\beta=\sum_{\gamma\in S} c_{\beta,\gamma} \gamma$ for some $c_{\beta,\gamma}\in\{0,1,2\}$ with $c_{\beta,\alpha}\in\{1,2\}$. Note that $\langle \alpha,\alpha^\vee\rangle=2$, and for all other $\gamma\in S$ we have $\langle \gamma,\alpha^\vee\rangle\leq 0$. 
		
		To prove \eqref{eq:root inequality maximal case}, it suffices to prove the following two things: (i) for all $\beta\in R^+\setminus R_I$ with $\beta\neq\alpha$ and $c_{\beta,\alpha}=1$, we have $\langle \beta,\alpha\rangle\leq 1$; and (ii) for all $\beta\in R^+\setminus R_I$ with $c_{\beta,\alpha}=2$, we have either $\langle \beta,\alpha^\vee\rangle\leq 1$, or $\langle \beta,\alpha^\vee\rangle\leq 2$ and there exists $\beta'\in R^+\setminus R_I$ such that $\langle \beta',\alpha^\vee\rangle\leq 0$. Indeed, if (i) and (ii) hold, then the average value of $\langle \beta,\alpha^\vee\rangle$ over all $\beta\in R^+\setminus R_I$ with $\beta\neq \alpha$ is bounded above by $1$, and thus 
		\begin{align*}
			\frac{1}{\#((R^+\setminus R_I)\setminus\{\alpha\})}\sum_{\beta\in (R^+\setminus R_I)\setminus\{\alpha\}} \langle \beta,\alpha^\vee\rangle &\leq 1 
			\\ \sum_{\beta\in (R^+\setminus R_I)\setminus\{\alpha\}} \langle \beta,\alpha^\vee\rangle &\leq \#(R^+\setminus R_I)-1
			\\ \sum_{\beta\in R^+\setminus R_I} \langle\beta,\alpha^\vee\rangle &\leq \#(R^+\setminus R_I)+1
		\end{align*}
		where in the last step we added $\langle \alpha,\alpha^\vee\rangle=2$ to both sides. 
		
		Now we prove (i). In this case, there is some $\gamma_0\in S$ adjacent to $\alpha$ in the Dynkin diagram of $G$ which satisfies $c_{\beta,\gamma_0}=1$. Hence 
		\begin{align*}
			\langle \beta,\alpha^\vee\rangle \leq c_{\beta,\alpha}\langle \alpha,\alpha^\vee\rangle + c_{\beta,\gamma_0}\langle \gamma_0,\alpha^\vee\rangle = (1)(2)+(1)(-1)=1.
		\end{align*}
		
		Now we prove (ii). Note that $c_{\beta,\alpha}=2$ cannot occur in type $\mathbf{A}_\ell$. So it remains to check types $\mathbf{B}_\ell$-$\mathbf{D}_\ell$.
		
		In type $\mathbf{D}_\ell$, if $c_{\beta,\alpha}=2$, then there must be (two or three) simple roots $\gamma$ adjacent to $\alpha$ whose coefficients $c_{\beta,\gamma}$ sum to at least $3$; in each case we have $\langle \beta,\gamma\rangle=-1$. Hence
		\begin{align*}
			\langle \beta,\alpha^\vee\rangle \leq c_{\beta,\alpha}\langle \alpha,\alpha^\vee\rangle + (3)(-1) = (2)(2)+(3)(-1)=1.
		\end{align*}
		
		In type $\mathbf{B}_\ell$ (for $\ell\geq 3$), if $c_{\beta,\alpha}=2$, then $\alpha$ cannot be the longest root, and we have the following two cases. First, if $\alpha$ is not the shortest root, then the two simple roots $\gamma_0,\gamma_1$ adjacent to $\alpha$ must satisfy $c_{\beta,\gamma_0}+c_{\beta,\gamma_1}\geq 3$; note that $\langle \gamma_i,\alpha^\vee\rangle=-1$ for $i=1,2$. In this case,
		\begin{align*}
			\langle \beta,\alpha^\vee\rangle \leq c_{\beta,\alpha}\langle \alpha,\alpha^\vee\rangle + c_{\beta,\gamma_0}\langle \gamma_0,\alpha^\vee\rangle + c_{\beta,\gamma_1}\langle \gamma_1,\alpha^\vee\rangle \leq (2)(2)+(3)(-1) = 1
		\end{align*}
		On the other hand, suppose that $\alpha$ is the shortest root, so there is a unique simple root $\gamma_0$ adjacent to $\alpha$; note that $\langle \gamma_0,\alpha^\vee\rangle=-2$. In this case, 
		\begin{align*}
			\langle \beta,\alpha^\vee\rangle \leq c_{\beta,\alpha}\langle \alpha,\alpha^\vee\rangle + c_{\beta,\gamma_0}\langle \gamma_0,\alpha^\vee\rangle = (2)(2) + c_{\beta,\gamma_0}(-2) = 2(2-c_{\beta,\gamma_0}).
		\end{align*}
		If $c_{\beta,\gamma_0}=1$, then the above implies $\langle \beta,\alpha^\vee\rangle \leq 2$, so we may take $\beta':=\beta-\gamma_0\in R^+\setminus R_I$ to obtain $\langle \beta',\alpha\rangle\leq 0$ since $c_{\beta',\gamma_0}=2$. On the other hand, if $c_{\beta,\gamma_0}=2$, then we automatically get $\langle \beta,\alpha^\vee\rangle\leq 0$. 
		
		Finally, in type $\mathbf{C}_\ell$, if $c_{\beta,\alpha}=2$, then $\alpha$ cannot be the longest root, and we have the following two cases. First, if $\alpha$ is not the shortest root, then we get $\langle \beta,\alpha^\vee\rangle\leq 1$ as in the $\mathbf{B}_\ell$ case. On the other hand, suppose that $\alpha$ is the shortest root. Then the simple root $\gamma_0$ adjacent to $\alpha$ must satisfy $c_{\beta,\gamma_0}=2$; note that $\langle \gamma_0,\alpha^\vee\rangle=-1$. Hence
		\begin{align*}
			\langle \beta,\alpha^\vee\rangle \leq c_{\beta,\alpha}\langle \alpha,\alpha^\vee\rangle + c_{\beta,\gamma_0}\langle \gamma_0,\alpha^\vee\rangle = (2)(2) + (2)(-1) = 2.
		\end{align*}
		In this case, we may take $\beta':=\beta-\alpha\in R^+\setminus R_I$, and a similar computation to the one above yields $\langle \beta',\alpha^\vee\rangle\leq 0$ since $c_{\beta',\alpha}=1$. 
		
		Lastly, it remains to prove the ``moreover" part in the lemma statement. It is easy to verify that $\omega(I)=0$ if and only if one of the following cases holds: the Dynkin diagram of $G$ is type $\mathbf{A}_\ell$ and $\alpha$ is an end simple root in the diagram, or type $\mathbf{C}_\ell$ and $\alpha$ is the longest simple root. These two cases are precisely the cases when the flag variety $G/P$ is a projective space.
	\end{proof}
\end{lemma}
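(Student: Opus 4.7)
Since the dual pairing is trivial between simple roots on disconnected Dynkin components, I would first reduce to the case where $G$ is semisimple with a connected Dynkin diagram (the disconnected case follows factor-by-factor). Since $I\subsetneq S$ is assumed maximal, the complement $\calC=S\setminus I$ is a singleton $\{\alpha\}$, so $R^+\setminus R_I$ is exactly the set of positive roots $\beta$ whose expansion $\beta=\sum_{\gamma\in S} c_{\beta,\gamma}\gamma$ satisfies $c_{\beta,\alpha}\geq 1$, and the target inequality becomes
\begin{align*}
\sum_{\beta\in R^+\setminus R_I}\langle \beta,\alpha^\vee\rangle \;\leq\; \#(R^+\setminus R_I)+1.
\end{align*}
Since $\langle\alpha,\alpha^\vee\rangle=2$ and $\langle\gamma,\alpha^\vee\rangle\leq 0$ for all $\gamma\in S\setminus\{\alpha\}$, each term $\langle\beta,\alpha^\vee\rangle$ is an integer bounded above by $2$. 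The game is then to show that, on average over $\beta\in(R^+\setminus R_I)\setminus\{\alpha\}$, the pairing is at most $1$, so that adding back the contribution $\langle\alpha,\alpha^\vee\rangle=2$ yields the desired ``$+1$'' slack.

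My plan is to dispatch the exceptional types $\mathbf{E}_6,\mathbf{E}_7,\mathbf{E}_8,\mathbf{F}_4,\mathbf{G}_2$ by a finite, direct enumeration of positive roots (which is entirely mechanical and can be done by hand or by a short computer check), and then concentrate on the infinite families $\mathbf{A}_\ell$--$\mathbf{D}_\ell$. For these, I would establish two structural claims about a root $\beta\neq\alpha$ lying in $R^+\setminus R_I$:
\begin{itemize}
\item[(i)] If $c_{\beta,\alpha}=1$, then connectivity of the support of $\beta$ forces some neighbour $\gamma_0$ of $\alpha$ in the Dynkin diagram to satisfy $c_{\beta,\gamma_0}\geq 1$, which gives $\langle\beta,\alpha^\vee\rangle\leq 2+\langle\gamma_0,\alpha^\vee\rangle\leq 1$.
\item[(ii)] If $c_{\beta,\alpha}=2$, then a similar support analysis (using that $c_{\beta,\alpha}=2$ cannot occur in type $\mathbf{A}$) produces enough mass on neighbours of $\alpha$ to force $\langle\beta,\alpha^\vee\rangle\leq 1$ in almost every case.
\end{itemize}

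The main obstacle will be the short-root end of type $\mathbf{B}_\ell$ and the long-root end of type $\mathbf{C}_\ell$, where a neighbour of $\alpha$ is joined by a double bond and one can genuinely have $\langle\beta,\alpha^\vee\rangle=2$. To handle these, I would pair each such ``bad'' $\beta$ with a compensating root $\beta'\in R^+\setminus R_I$, defined as $\beta-\gamma_0$ or $\beta-\alpha$ depending on the case, and verify using the same pairing computation that $\langle\beta',\alpha^\vee\rangle\leq 0$. The crucial point is that $\beta\mapsto\beta'$ is injective and lands in $R^+\setminus R_I$, so the excess from bad $\beta$'s is absorbed by the deficit from the $\beta'$'s, restoring the average bound of $1$. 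Summing everything and adding the term from $\beta=\alpha$ then gives the inequality.

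For the equality clause, I would trace through the argument and record exactly when every inequality above is tight. In types $\mathbf{A}_\ell,\mathbf{D}_\ell,\mathbf{E}_\bullet,\mathbf{F}_4,\mathbf{G}_2$, tightness forces $\alpha$ to be a Dynkin endpoint with no double bond and no branch, and inspection rules out $\mathbf{D},\mathbf{E},\mathbf{F},\mathbf{G}$; the remaining $\mathbf{A}_\ell$ cases correspond to $\alpha$ being one of the two end nodes, in which case $G/P$ is a standard projective space $\PP^\ell$. In type $\mathbf{C}_\ell$, equality forces $\alpha$ to be the unique long simple root at the end of the diagram, and the corresponding $G/P$ is again a projective space (the Lagrangian construction collapses). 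In type $\mathbf{B}_\ell$, the short-root compensation argument is never tight, so equality fails. This classification matches precisely the list of cominuscule parabolic quotients that are isomorphic to projective space, completing the ``moreover'' clause.
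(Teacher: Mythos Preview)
Your proposal is correct and follows essentially the same route as the paper: reduce to a connected Dynkin diagram, dispatch the exceptional types by finite enumeration, and for the classical types prove the averaging bound by splitting into the cases $c_{\beta,\alpha}=1$ and $c_{\beta,\alpha}=2$, using a compensating root $\beta'$ (of the form $\beta\pm\gamma_0$ or $\beta-\alpha$) to absorb the few $\beta$ with $\langle\beta,\alpha^\vee\rangle=2$. Your explicit remark that the pairing $\beta\mapsto\beta'$ must be injective is a point the paper leaves implicit in its averaging step; one small caution is that in type $\mathbf{C}_\ell$ the ``bad'' roots occur for $\alpha$ a short simple root (not the long end node), so your phrase ``long-root end of $\mathbf{C}_\ell$'' should be adjusted when you write this out in full.
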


Our next goal is to show that $\omega(I)$ is minimized when $I$ is maximal by proving Lemma \ref{lemma:monotonicity of root inequality}. We do so after a preliminary result.

\begin{remark}\label{rmk:reinterpretation of omega}
	By \cite[Theorem 6.1.2]{perrin2018sanya}, we may write
	\begin{align*}
		-K_{G/P}=\sum_{\alpha\in\calC} b_\alpha D_\alpha
	\end{align*}
	where $D_\alpha\subseteq X$ is the colour divisor corresponding to $\alpha$ and 
	\begin{align}\label{eq:coefficients of G/P canonical}
		b_\alpha=\sum_{\beta\in R^+\setminus R_I} \langle \beta,\alpha^\vee\rangle\in\Z_{>0}.
	\end{align}
	Note that
	\begin{align*}
		\omega(I) = \#\calC + \dim(G/P) - \sum_{\alpha\in\calC} b_\alpha.
	\end{align*}
\end{remark}

\begin{lemma}\label{lemma:inequality for coefficients of flag canonical}
	Let $I'\subsetneq I$, $P':=P_{I'}$ be the parabolic corresponding to $I'$, and $\calC':=S\setminus I'$. Let $b_\alpha$ (resp. $b'_\alpha$) be as in \eqref{eq:coefficients of G/P canonical} corresponding to $I$ (resp. $I'$). Then, for each $\alpha\in\calC$, we have $b_\alpha'\leq b_\alpha$. Furthermore, we have
	\begin{align*}
		\sum_{\alpha\in\calC} b_\alpha' < \sum_{\alpha\in\calC} b_\alpha.
	\end{align*}
	
	\begin{proof}
		Fix $\alpha\in \calC=S\setminus I$. By \eqref{eq:coefficients of G/P canonical}, we have
		\begin{align*}
			b_\alpha' = b_\alpha + \sum_{\beta\in R_I^+\setminus R_{I'}^+} \langle \beta,\alpha^\vee\rangle. 
		\end{align*}
		Since $\alpha\notin I$, $\alpha$ is not in the support of any $\beta\in R_I^+\setminus R_{I'}^+$. Hence, $\langle \beta,\alpha^\vee\rangle\leq 0$ for each $\beta\in R_I^+\setminus R_{I'}^+$, so the above equation implies $b_\alpha'\leq b_\alpha$. 
		
		Lastly, since we are assuming that the Dynkin diagram of $G$ is connected, and since $I\setminus I'\neq\varnothing$, we may choose $\beta\in R_I^+\setminus R_{I'}^+$ and $\gamma\in\calC$ so that $\langle \beta,\gamma^\vee\rangle<0$. Therefore, the above equation implies $b_\gamma'<b_\gamma$, and thus $\sum_{\alpha\in\calC} b_\alpha' < \sum_{\alpha\in\calC} b_\alpha$.
	\end{proof}
\end{lemma}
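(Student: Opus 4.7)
The plan is to analyze the difference $b'_\alpha - b_\alpha$ directly using $R_{I'} \subseteq R_I$. The inclusion gives a disjoint decomposition $R^+ \setminus R_{I'}^+ = (R^+ \setminus R_I^+) \sqcup (R_I^+ \setminus R_{I'}^+)$, so plugging into \eqref{eq:coefficients of G/P canonical} yields
\[
b'_\alpha - b_\alpha = \sum_{\beta \in R_I^+ \setminus R_{I'}^+} \langle \beta, \alpha^\vee \rangle
\]
for every $\alpha \in \calC$. The lemma thus reduces to two claims: that each such sum is nonpositive (giving $b'_\alpha \leq b_\alpha$), and that it is strictly negative for at least one $\alpha \in \calC$ (yielding the strict inequality on the total).

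For nonpositivity, I would fix $\alpha \in \calC = S \setminus I$ and observe that any $\beta \in R_I^+$ is a nonnegative integer combination of simple roots from $I$, so $\alpha$ does not appear in the support of $\beta$. Expanding $\langle \beta, \alpha^\vee \rangle$ term by term, every contribution has the form $c \, \langle \gamma, \alpha^\vee \rangle$ with $c \geq 0$ and $\gamma \in I$, in particular $\gamma \neq \alpha$; the pairing between distinct simple roots and coroots of a root system is always nonpositive, so $\langle \beta, \alpha^\vee \rangle \leq 0$, and summing gives $b'_\alpha \leq b_\alpha$.

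For the strict inequality, I need to produce some $\gamma \in \calC$ and some $\beta \in R_I^+ \setminus R_{I'}^+$ with $\langle \beta, \gamma^\vee \rangle < 0$. This is where the reduction to connected Dynkin diagrams (already in force for Proposition \ref{prop:root inequality}) and the hypothesis $I \setminus I' \neq \varnothing$ come in. Pick any $\delta \in I \setminus I'$. By connectedness, $\delta$ is linked to some $\gamma \in \calC$ by a path of adjacent simple roots $\delta = \delta_1, \ldots, \delta_m$ lying in $I$ with $\delta_m$ adjacent to $\gamma$. Then $\beta := \delta_1 + \cdots + \delta_m$ is a positive root in $R_I^+$ (using the standard fact that the sum of simple roots along a connected subdiagram of a finite Dynkin diagram is a root), and since $\delta \notin I'$ appears in its support, $\beta \notin R_{I'}^+$. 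Pairing with $\gamma^\vee$, the $\delta_m$-term is strictly negative by adjacency while all others are nonpositive, giving $\langle \beta, \gamma^\vee \rangle < 0$.

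The main obstacle is justifying cleanly that the sum of simple roots along a connected subdiagram is a positive root; if one wishes to sidestep this classical fact, one can proceed by induction on the graph-theoretic distance in the Dynkin diagram from $I \setminus I'$ to $\calC$, the base case being when some $\delta \in I \setminus I'$ is itself adjacent to $\calC$ (in which case $\beta = \delta$ works immediately), and the inductive step extending the path one simple root at a time.
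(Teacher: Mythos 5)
Your proposal is correct and follows the same approach as the paper: it derives $b'_\alpha - b_\alpha = \sum_{\beta\in R_I^+\setminus R_{I'}^+}\langle\beta,\alpha^\vee\rangle$ from the decomposition of $R^+\setminus R_{I'}^+$, uses the support argument for nonpositivity, and uses connectedness of the Dynkin diagram together with $I\setminus I'\neq\varnothing$ for the strict inequality. The only difference is that you explicitly construct the root $\beta$ via a path in the Dynkin diagram where the paper simply asserts its existence; your construction is sound (the sum of simple roots along a path in a Dynkin diagram is indeed a root, by repeatedly adding a simple root that pairs negatively with the running sum).
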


\begin{lemma}\label{lemma:monotonicity of root inequality}
	If $I'\subsetneq I\subsetneq S$, then $\omega(I)<\omega(I')$. 
	
	\begin{proof}
		It suffices to consider the special case where $\#(I\setminus I')=1$, say $I\setminus I'=\{\gamma\}$. Indeed, the general case follows from inductively applying this special case.
		
		Consider the notation in the set up of Lemma \ref{lemma:inequality for coefficients of flag canonical}. Let $G':=P/R_u(P)$ (where $R_u(P)$ is the unipotent radical of $P$), so $G'$ is connected and reductive, the set of simple roots for $G$ is $I$, and the set of positive roots is $R_I^+$. Then $P/P'$ is isomorphic to the flag variety $G'/P''$ where $P''$ is the parabolic subgroup of $G'$ corresponding to the subset $I'\subseteq I$ of simple roots. Since $I'\subsetneq I$ is a maximal subset, Lemma \ref{lemma:root inequality for maximal} implies that $\omega(G'/P'')\geq 0$. 
		
		Note that $\calC'\setminus\calC=I\setminus I'=\{\gamma\}$ is the universal colour set of $G'/P''$. As in Remark \ref{rmk:reinterpretation of omega}, we may write $-K_{G'/P''}=b_\gamma''D_\gamma''$ for some $b_\gamma''\in\Z_{>0}$, where $D_\gamma''$ is the unique colour divisor of $G'/P''$. We first show that $b_\gamma'=b_\gamma''$. Indeed, by \eqref{eq:coefficients of G/P canonical} we have
		\begin{align*}
			b_\gamma'-b_\gamma'' = \sum_{\beta\in R^+\setminus R_{I'}} \langle \beta,\gamma^\vee\rangle - \sum_{\beta\in R_I^+\setminus R_{I'}} \langle \beta,\gamma^\vee\rangle = \sum_{\beta\in R^+} \langle \beta,\gamma^\vee\rangle - \sum_{\beta\in R_I^+} \langle \beta,\gamma^\vee\rangle = 2-2 = 0
		\end{align*}
		since $\sum_{\beta\in R^+} \langle \beta,\gamma^\vee\rangle$ and $\sum_{\beta\in R_I^+} \langle \beta,\gamma^\vee\rangle$ are the coefficients of the colour divisors corresponding to $\gamma$ in the anticanonical divisors of $G/B$ and $G'/B'$ ($B'$ denoting the corresponding Borel subgroup in $G'$), respectively, which are known to be exactly $2$ (see \cite[Proposition 2.2.8(iv)]{brion2004lectures}). 
		
		Using Remark \ref{rmk:reinterpretation of omega} and Lemma \ref{lemma:inequality for coefficients of flag canonical}, we have
		\begin{align*}
			\omega(I')-\omega(I) =& \dim(G/P')-\dim(G/P) + \#\calC'-\#\calC + \sum_{\alpha\in\calC'} (-b_\alpha')-\sum_{\alpha\in\calC} (-b_\alpha)
			\\=& \dim(G'/P'') + \#(\calC'\setminus \calC) + (-b_\gamma') + \sum_{\alpha\in\calC} (b_\alpha-b_\alpha')
			\\=& \dim(G'/P'') + \#(\calC'\setminus \calC) + (-b_\gamma'') + \sum_{\alpha\in\calC} (b_\alpha-b_\alpha')
			\\=& \omega(G'/P'')+\sum_{\alpha\in\calC} (b_\alpha-b_\alpha') > 0. \qedhere
		\end{align*}
	\end{proof}
\end{lemma}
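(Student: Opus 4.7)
The plan is to induct on $\#(I \setminus I')$ and thus reduce to the case $I \setminus I' = \{\gamma\}$ for a single simple root $\gamma$, so that $I'$ is maximal in $I$. Using the expression for $\omega$ from Remark \ref{rmk:reinterpretation of omega}, I would compute
\begin{align*}
\omega(I') - \omega(I) = \bigl[\dim(G/P') - \dim(G/P)\bigr] + 1 - b_\gamma' + \sum_{\alpha \in \calC}(b_\alpha - b_\alpha'),
\end{align*}
using $\calC' = \calC \sqcup \{\gamma\}$. The geometric step is to recognize that the fiber of the projection $G/P' \to G/P$ is the flag variety $G'/P''$, where $G' := P/R_u(P)$ is the reductive Levi quotient of $P$ (with positive root system $R_I^+$) and $P'' \subset G'$ is the parabolic corresponding to $I' \subsetneq I$. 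In particular, $\dim(G/P') - \dim(G/P) = \dim(G'/P'')$.

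Since $I'$ is maximal in $I$, Lemma \ref{lemma:root inequality for maximal} applied inside $G'$ gives $\omega(G'/P'') \geq 0$, which translates to $\dim(G'/P'') + 1 - b_\gamma'' \geq 0$, where $b_\gamma'' := \sum_{\beta \in R_I^+ \setminus R_{I'}^+} \langle \beta, \gamma^\vee\rangle$ is the coefficient of the unique colour divisor of $G'/P''$ in $-K_{G'/P''}$. The crux is then to match $b_\gamma' = \sum_{\beta \in R^+ \setminus R_{I'}} \langle \beta, \gamma^\vee\rangle$ with $b_\gamma''$, after which the decomposition
\begin{align*}
\omega(I') - \omega(I) = \omega(G'/P'') + \sum_{\alpha \in \calC}(b_\alpha - b_\alpha')
\end{align*}
combines with Lemma \ref{lemma:inequality for coefficients of flag canonical} (which provides strict positivity of the remaining sum) to yield the strict inequality $\omega(I') > \omega(I)$.

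The main obstacle I anticipate is establishing the identity $b_\gamma' = b_\gamma''$. Their difference is
\begin{align*}
b_\gamma' - b_\gamma'' = \sum_{\beta \in R^+ \setminus R_I^+} \langle \beta, \gamma^\vee\rangle = \sum_{\beta \in R^+} \langle \beta, \gamma^\vee\rangle - \sum_{\beta \in R_I^+} \langle \beta, \gamma^\vee\rangle,
\end{align*}
and I would invoke the classical identity $\sum_{\beta \in R^+} \langle \beta, \gamma^\vee\rangle = 2$ for every simple root $\gamma$, which expresses the fact that the colour coefficients of $-K_{G/B}$ are all equal to $2$ (see \cite[Proposition 2.2.8(iv)]{brion2004lectures}). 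Applying this identity once to $G$ and once to $G'$ shows both sums equal $2$, so their difference vanishes. This reliance on a classical root-system identity, rather than a direct combinatorial manipulation, is the most delicate point of the argument.
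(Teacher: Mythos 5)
Your proposal is correct and takes essentially the same approach as the paper: reduce by induction to $\#(I\setminus I')=1$, identify the fiber of $G/P'\to G/P$ with the flag variety $G'/P''$ of the Levi quotient $G'=P/R_u(P)$, apply Lemma~\ref{lemma:root inequality for maximal} inside $G'$, show $b_\gamma'=b_\gamma''$ via the classical identity $\sum_{\beta\in R^+}\langle\beta,\gamma^\vee\rangle=2$ applied to both $G$ and $G'$, and conclude via $\omega(I')-\omega(I)=\omega(G'/P'')+\sum_{\alpha\in\calC}(b_\alpha-b_\alpha')>0$ using Lemma~\ref{lemma:inequality for coefficients of flag canonical} for strict positivity. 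The step you flagged as the ``most delicate point'' is handled in the paper exactly as you propose.
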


\begin{proof}[Proof of Proposition \ref{prop:root inequality}]
	The inequality \eqref{eq:root inequality} follows by combining Lemma \ref{lemma:root inequality for maximal} and Lemma \ref{lemma:monotonicity of root inequality}, and these results imply that we have equality if and only if $G/P$ is a product of projective spaces.  
\end{proof}

\section{Results on Picard number $1$ horospherical varieties}\label{sec:results on Picard number 1 horospherical varieties}

This section is dedicated to proving Proposition \ref{prop:Picard 1 - full}, stated below, on projective $\Q$-factorial horospherical varieties with Picard number $1$, which is the main input in the proof of Proposition \ref{prop:MMP final step}. 

We begin by classifying these horospherical varieties with Picard number $1$.

\begin{lemma}\label{lemma:Picard number 1 horospherical varieties}
	Suppose that $X$ is a projective $\Q$-factorial horospherical $G/H$-variety. Then $X$ has Picard number $1$ if and only if one of the following holds:
	\begin{enumerate}
		\item $\rank(N)=0$, $\#\calC=1$, and $\Sigma^c=0^c$. Said another way, $X$ is a flag variety with exactly one colour divisor. 
		\item $\rank(N)>0$, $\calF(\Sigma^c)=\calC$, and $\#\Sigma^c(1)=\rank(N)+1$.
	\end{enumerate}
	
	\begin{proof}
		Since $X$ is $\Q$-factorial, the Picard number $\rank(\Pic(X))$ is equal to $\rank(\Cl(X))$, which is equal to $\#\calD(X)-\rank(N)$. Since $\Sigma^c$ is simplicial, we have an injection $\calF(\Sigma^c)\hookrightarrow \Sigma^c(1)$ which sends $\alpha\mapsto (\Cone(u_\alpha),\{\alpha\})$. It follows that $\#\calD(X)=\#\Sigma^c(1)+\#(\calC\setminus\calF(\Sigma^c))$. Since $X$ is projective, $\Sigma^c$ is complete, and thus $\#\Sigma^c(1)\geq \rank(N)+1$ or $\rank(N)=0$ (in which case $\#\Sigma^c(1)=0$ and $\calF(\Sigma^c)=\varnothing$). Hence
		\begin{align*}
			\rank(\Pic(X))=1 &\iff (\#\Sigma^c(1)-\rank(N))+\#(\calC\setminus\calF(\Sigma^c))=1 
		\end{align*}
		and the final equation is satisfied if and only if conditions (1) or (2) are met. 
	\end{proof}
\end{lemma}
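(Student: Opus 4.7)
The plan is to express the Picard number as a combinatorial quantity built from the coloured fan $\Sigma^c$ and then perform a short case analysis. Since $X$ is $\Q$-factorial, the Picard number equals $\rank(\Cl(X))$. For a horospherical $G/H$-variety, the class group sits in a standard exact sequence where the characters of $P$ map to the free abelian group on $B^-$-invariant prime divisors $\calD(X)$; this gives $\rank(\Cl(X)) = \#\calD(X) - \rank(N)$, so the whole proof reduces to bookkeeping of $\#\calD(X)$.

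Next I would compute $\#\calD(X)$ in terms of $\Sigma^c(1)$ and $\calC$. The $B^-$-invariant prime divisors split into $G$-invariant divisors (one per ray of the underlying fan) and colour divisors $D_\alpha$ for each $\alpha \in \calC$. Because $X$ is $\Q$-factorial, the coloured fan $\Sigma^c$ is simplicial, so each $\alpha \in \calF(\Sigma^c)$ produces a coloured ray $(\Cone(u_\alpha),\{\alpha\})\in \Sigma^c(1)$, identifying its colour divisor with a ray divisor. In particular, $\calF(\Sigma^c) \hookrightarrow \Sigma^c(1)$ is an injection, and no two colours in $\calF(\Sigma^c)$ can share a ray; the remaining colour divisors, indexed by $\calC\setminus \calF(\Sigma^c)$, contribute independently. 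Thus $\#\calD(X) = \#\Sigma^c(1) + \#(\calC\setminus\calF(\Sigma^c))$.

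Now I would invoke completeness of $\Sigma^c$ (from projectivity of $X$): if $\rank(N)>0$ then any complete fan has at least $\rank(N)+1$ rays, while if $\rank(N)=0$ then necessarily $\Sigma^c = 0^c$ with $\Sigma^c(1)=\varnothing$ and $\calF(\Sigma^c)=\varnothing$. Plugging into $\rank(\Pic(X))=1$ yields
\begin{equation*}
(\#\Sigma^c(1)-\rank(N)) + \#(\calC\setminus \calF(\Sigma^c)) = 1,
\end{equation*}
a sum of two nonnegative integers (by the ray bound just recalled). In the case $\rank(N)=0$ both quantities degenerate and the equation collapses to $\#\calC=1$, recovering condition (1). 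In the case $\rank(N)>0$, the first summand is already $\geq 1$, so it must equal $1$ and the second summand must vanish, giving $\#\Sigma^c(1) = \rank(N)+1$ and $\calF(\Sigma^c)=\calC$, which is condition (2). Conversely, plugging either set of conditions back into the formula gives Picard number one.

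The only substantive point in the argument is recognizing that $\Q$-factoriality forces $\Sigma^c$ to be simplicial, which is what underlies the clean decomposition $\#\calD(X) = \#\Sigma^c(1)+\#(\calC\setminus\calF(\Sigma^c))$; without this, a colour could lie on a higher-dimensional coloured cone and one would have to double-count. Once this is in place, the remainder is elementary, and I expect no genuine obstacles.
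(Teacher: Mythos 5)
Your proposal is correct and follows essentially the same route as the paper: compute $\rank(\Pic(X)) = \#\calD(X) - \rank(N)$, use simpliciality to decompose $\#\calD(X) = \#\Sigma^c(1) + \#(\calC\setminus\calF(\Sigma^c))$, and then use completeness to bound $\#\Sigma^c(1)$ and split into the two cases. The only difference is that you spell out the final case analysis in slightly more detail than the paper does.
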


The following is the main result of this section. It yields a strong version of Theorem \ref{thm:main theorem with Q-factorial reduction} for these horospherical varieties with Picard number $1$. 

\begin{proposition}\label{prop:Picard 1 - full}
	Let $W$ be a projective $\Q$-factorial horospherical variety with Picard number $1$, and let $p\in W(k)$. Then there exists a unibranch rational curve $C\subseteq W$ through $p$ which satisfies $-K_W\cdot C\leq 1+\dim(W)$. Furthermore, if $W$ has terminal singularities and is not isomorphic to projective space, then $C$ can be chosen to additionally satisfy $-K_W\cdot C\leq \dim(W)$. 
\end{proposition}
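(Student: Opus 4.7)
The plan is to apply Lemma~\ref{lemma:Picard number 1 horospherical varieties} and treat its two cases separately.

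\emph{Case (1).} Here $W = G/P$ is a flag variety with $\calC = \{\alpha\}$, automatically smooth (hence terminal) and $G$-homogeneous. I will construct a Schubert line through $p$ using the one-parameter root subgroup $U_{-\alpha}$: if $p = g \cdot eP$ for some $g \in G(k)$ (which exists by $G(k)$-transitivity in the split case), the orbit closure $C := g \cdot \overline{U_{-\alpha} \cdot eP} \cong \PP^1$ is a unibranch rational curve through $p$ with $D_\alpha \cdot C = 1$. By Remark~\ref{rmk:reinterpretation of omega}, $-K_W = b_\alpha D_\alpha$, giving $-K_W \cdot C = b_\alpha$. Proposition~\ref{prop:root inequality} applied with $\#\calC = 1$ yields $b_\alpha \leq 1 + \dim W$, with equality if and only if $W \cong \PP^n$; this establishes both the main bound and the moreover clause in this case.

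\emph{Case (2).} Now $\rank N > 0$, $\calF(\Sigma^c) = \calC$, and $\#\Sigma^c(1) = r+1$ where $r := \rank N$. Since $\calF = \calC$, no colour curve class can generate an extremal ray (this would require $\alpha \notin \calF$), so by the discussion in Section~\ref{subsec:the Mori cone} the unique extremal ray of $\NE(W)$ is generated by a wall curve class $C_\mu$ for some wall $\mu = \sigma_+ \cap \sigma_-$; denote the corresponding irreducible $B^-$-invariant $\PP^1$ also by $C_\mu$. To realize a rational curve through $p$, I will partition based on the $G$-orbit of $p$: if the orbit of $p$ meets some wall curve (i.e., $p$ lies in a $G$-orbit $O_{\sigma_\pm}$ or $O_{\mu'}$ for some wall $\mu'$), then a $G$-translate of the corresponding wall curve passes through $p$; for $p$ in the remaining orbits (including the open orbit), I will produce a rational curve as the image of a suitable $\SL_2$-orbit built from the horospherical structure and then $G$-translate.

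For the anticanonical bound in Case~(2), compute $-K_W \cdot C_\mu$ via the Cartier data formula~\eqref{eq:wall curve class}. Writing $-K_W = \sum_\rho D_\rho + \sum_\alpha b_\alpha D_\alpha$ and exploiting the positive linear relation $\sum_i a_i v_i = 0$ (with $a_i > 0$) among the $r+1$ primitive ray generators of $\Sigma^c$ (a consequence of completeness and simpliciality), the intersection simplifies to a positive rational combination involving the $b_\alpha$ and the coefficient $1$ of the non-coloured rays. Combining this with Proposition~\ref{prop:root inequality} and the dimension formula $\dim W = r + \#(R^+ \setminus R_I)$ yields $-K_W \cdot C_\mu \leq 1 + \dim W$. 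For the moreover clause, the equality case of Proposition~\ref{prop:root inequality} (which forces $G/P$ to be a product of projective spaces) combined with the terminal hypothesis restricts $W \cong \PP^n$, in analogy with the toric fact that projective $\Q$-factorial terminal Picard-$1$ toric varieties are precisely the projective spaces.

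The hardest step will be the combinatorial calculation in Case~(2), which requires careful tracking of the integers $d_\alpha$ (when $u_\alpha$ is not primitive on its ray), the relation coefficients $a_i$, and the colour coefficients $b_\alpha$, together with the orbit-by-orbit construction of rational curves through every $p$ and the verification of the equality case under the terminal assumption.
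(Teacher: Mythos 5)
Your Case (1) is correct and essentially matches the paper's argument: translate $p$ to $eP$ by $G(k)$-transitivity, take the dual Schubert curve with $D_\alpha\cdot C=1$, apply Remark \ref{rmk:reinterpretation of omega} and Proposition \ref{prop:root inequality}, and use that the equality case forces $G/P\cong\PP^n$.

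Case (2) has genuine gaps. First, the curve construction for $p$ in the open $G$-orbit is left as ``a suitable $\SL_2$-orbit built from the horospherical structure,'' which is not a proof. The paper instead takes the closure $C_\lambda$ of the $\lambda$-orbit of the base point $eH$, where $\lambda$ is the one-parameter subgroup of $\Aut^G(W)$ corresponding to the ray generator $v_0$ chosen so that its coefficient $a_0$ in the unique positive relation $\sum a_i w_i = 0$ is maximal, and then proves $D\cdot C_\lambda \leq 1$ for every $D\in\calD(W)$ by a direct computation plus an induction on $\operatorname{rank}(N)$ that handles boundary orbits via Lemma \ref{lemma:divisor-curve pairing on orbit closure}. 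Second, even granting a curve $C$ through the open orbit, your anticanonical bound is computed only for the wall curve class $C_\mu$; but in Picard number $1$ one has $[C]=t[C_\mu]$ for some $t>0$ and there is no a priori reason that $t\leq 1$, so $-K_W\cdot C_\mu\leq 1+\dim W$ does not transfer to $C$. The paper avoids this entirely by bounding $D\cdot C_\lambda$ directly for the specific curve it builds. Third, your moreover-clause argument appeals to an unproved horospherical analogue of the statement ``$\Q$-factorial terminal Picard-$1$ toric varieties are projective spaces''; the paper instead splits Case (2) into the toric sub-case (cited from \cite{satriano2021approximating}) and the non-toric sub-case, where $G/P$ is automatically not a product of projective spaces, so the inequality of Proposition \ref{prop:root inequality} is strict and the bound $-K_W\cdot C\leq\dim W$ follows without invoking terminality at all. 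You would need to either carry out the $\SL_2$-orbit construction concretely and prove the degree bound for that curve, or switch to the one-parameter-subgroup construction with its induction.
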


From Lemma \ref{lemma:Picard number 1 horospherical varieties}, we know that there are two types of projective $\Q$-factorial horospherical varieties with Picard number $1$. The first is flag varieties with a single colour divisor. In this case, Proposition \ref{prop:Picard 1 - full} becomes the following.

\begin{proposition}\label{prop:Picard 1 - flag variety result}
	Proposition \ref{prop:Picard 1 - full} holds when $W$ is a flag variety with Picard number $1$. 
	
	\begin{proof}
		Since $W=G/P$ is a single $G$-orbit, we may translate our curve by a $G(k)$-point to ensure that it passes through any particular point in $W$. Thus, we may assume that $p$ is the base point $eP\in W=G/P$. By assumption, $W$ has exactly one colour divisor $D_\alpha$ with $\calC=\{\alpha\}$. We may choose the dual Schubert curve $C\subseteq W$ so that $D_\alpha\cdot C=1$. This curve $C$ is unibranch, rational, and passes through $p$. Furthermore, Remark \ref{rmk:reinterpretation of omega} implies that $-K_W=aD_\alpha$ where $a$ equals the left-hand side of \eqref{eq:root inequality}. Therefore, Proposition \ref{prop:root inequality} and the fact that the right-hand side of \eqref{eq:root inequality} is equal to $\dim(W)+1$ imply that 
		\begin{align*}
			-K_W\cdot C=aD_\alpha\cdot C=a\leq \dim(W)+1.
		\end{align*}
		If $W$ is not isomorphic to projective space, then the above inequality is a strict inequality of integers, so we can strengthen it to $-K_W\cdot C\leq\dim(W)$. 		
	\end{proof}
\end{proposition}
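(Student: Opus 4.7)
The plan is to reduce to the case $p = eP$ using homogeneity, exhibit an explicit Schubert curve dual to the unique colour divisor, and then extract the bound from the root inequality of Proposition \ref{prop:root inequality}.

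First, since $W = G/P$ is a single $G$-orbit, for any $p \in W(k)$ we may pick $g \in G(k)$ with $g \cdot eP = p$; translation by $g$ sends any curve through $eP$ to one through $p$ with the same intersection numbers (because $-K_W$ and $D_\alpha$ are $G$-invariant divisor classes). So assume $p = eP$. Next I would construct a unibranch rational curve $C$ with $D_\alpha \cdot C = 1$. Under the hypothesis $\calC = \{\alpha\}$, the colour divisor $D_\alpha$ is the Schubert divisor $\overline{B^- s_\alpha P/P}$. The natural candidate for $C$ is the \emph{dual Schubert curve}, i.e.\ the $\mathrm{SL}_2$-orbit closure $\overline{U_\alpha \cdot eP} \subseteq G/P$ coming from the simple root $\alpha$, which is isomorphic to $\PP^1$, passes through $p$, is smooth (in particular unibranch), and satisfies $D_\alpha \cdot C = 1$ by the standard duality between Schubert divisors and the $s_\alpha$-Schubert curve in a minuscule-type $G/P$; the latter intersection is most easily checked by intersecting with a $B^-$-translate of $D_\alpha$ avoiding the torus-fixed point that $C$ and $D_\alpha$ share.

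With such $C$ in hand, the bound is essentially immediate. By Remark \ref{rmk:reinterpretation of omega}, we have $-K_W = b_\alpha D_\alpha$ with $b_\alpha = \sum_{\beta \in R^+ \setminus R_I} \langle \beta, \alpha^\vee\rangle$. Since $\#\calC = 1$, the left-hand side of \eqref{eq:root inequality} equals exactly $b_\alpha$, while the right-hand side equals $1 + \#(R^+ \setminus R_I) = 1 + \dim(G/P) = 1 + \dim(W)$. Thus
\begin{align*}
-K_W \cdot C \;=\; b_\alpha D_\alpha \cdot C \;=\; b_\alpha \;\leq\; 1 + \dim(W).
\end{align*}
For the sharper bound, I would invoke the equality clause of Proposition \ref{prop:root inequality}: equality in \eqref{eq:root inequality} forces $G/P$ to be a product of projective spaces, and combined with Picard rank one this forces $W \cong \PP^n$. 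So whenever $W \not\cong \PP^n$, the inequality above is strict; since both sides are integers, this upgrades to $-K_W \cdot C \leq \dim(W)$. Note the terminal-singularity hypothesis is vacuous here because $W = G/P$ is already smooth.

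The main obstacle I expect is justifying the existence of the rational curve with $D_\alpha \cdot C = 1$. Everything else is a bookkeeping exercise wrapped around Proposition \ref{prop:root inequality}. The construction via the minimal $B^-$-stable Schubert curve (indexed by the simple reflection $s_\alpha$) is classical, but one has to be careful that the relevant Schubert curve is indeed the dual of the divisor $D_\alpha$ in the intersection pairing, rather than of some other Schubert divisor. Once this single combinatorial fact is in place, the argument closes in two lines.
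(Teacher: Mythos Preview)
Your proof is correct and follows essentially the same route as the paper's: reduce to $p=eP$ by homogeneity, take the dual Schubert curve $C$ with $D_\alpha\cdot C=1$, and read off the bound $-K_W\cdot C=b_\alpha\leq 1+\dim(W)$ from Proposition \ref{prop:root inequality}, using the equality clause (plus Picard rank one) to get the sharper bound when $W\not\cong\PP^n$. Your added remarks on why Picard rank one collapses a product of projective spaces to a single $\PP^n$, and on the vacuity of the terminal hypothesis, are correct elaborations the paper leaves implicit.
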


Now we turn to Proposition \ref{prop:Picard 1 - full} for varieties of the second type in Lemma \ref{lemma:Picard number 1 horospherical varieties}. 
%We give these varieties a special name: if $W$ is a projective $\Q$-factorial horospherical $G/H$-variety with Picard number $1$, and if $N=N(G/H)$ has positive rank, then we call $W$ a \define{P1 horospherical variety} \sean{rename these? just call them ``varieties of type Lemma \ref{lemma:Picard number 1 horospherical varieties}(2)" or something?}. 
%The following result handles Proposition \ref{prop:Picard 1 - full} for such varieties. 
Together with Proposition \ref{prop:Picard 1 - flag variety result}, this implies Proposition \ref{prop:Picard 1 - full}. 

\begin{proposition}\label{prop:Picard 1 - P1 result}
	Let $W$ be as in Lemma \ref{lemma:Picard number 1 horospherical varieties}(2), and let $p\in W(k)$. Then there is a unibranch rational curve $C\subseteq W$ through $p$ which satisfies the following properties:
	\begin{enumerate}
		\item There exists a $G$-orbit closure $Z\subseteq W$ and a curve $C_0\subseteq Z$ which is the closure of the orbit of a $Z(k)$-point under the action of a one-parameter subgroup of $\Aut^G(Z)$ such that $C$ is the translate of $C_0$ by a $G(k)$-point.
		\item $-K_W\cdot C \leq 1+\dim(W)$.
		\item If $W$ has terminal singularities and is not isomorphic to projective space, then $C$ can be chosen to additionally satisfy $-K_W\cdot C\leq \dim(W)$. 
	\end{enumerate}
\end{proposition}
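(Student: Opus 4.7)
The plan is to realise $C$ as a $G(k)$-translate of a one-parameter subgroup orbit closure inside a suitable $G$-orbit closure $Z\subseteq W$, and then to bound $-K_W\cdot C$ by combining toric intersection theory on a toric slice of $Z$ with Proposition \ref{prop:root inequality}.

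\emph{Construction.} Choose a $G$-orbit closure $Z\subseteq W$ containing $p$ and a $k$-rational point $z\in Z$ in the open $G$-orbit of $Z$; for $p$ in the open $G$-orbit of $W$ take $Z=\overline{G\cdot p}$ and $z=p$, and enlarge $Z$ (up to $Z=W$) for $p$ in a smaller orbit, arranging matters so that the curve $C_0$ to be constructed meets the $G$-orbit of $p$. Let $T_Z:=\Aut^G(Z)$ with cocharacter lattice $N_Z$, let $\Sigma^c_Z$ be the coloured fan of $Z$, and let $\Sigma_Z$ denote its underlying fan. The toric slice $Y:=\overline{T_Z\cdot z}\subseteq Z$ is a projective $\Q$-factorial toric variety for $T_Z$ with fan $\Sigma_Z$; by Lemma \ref{lemma:Picard number 1 horospherical varieties}(2), $\Sigma_Z$ is simplicial with $\rank(N_Z)+1$ rays, so $Y$ has Picard rank one (a fake weighted projective space). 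Picking a primitive $v\in N_Z$ in the interior of an appropriate maximal cone of $\Sigma_Z$ produces a unibranch rational curve
\[
C_0:=\overline{\lambda_v(\mathbb{G}_m)\cdot z}\subseteq Y
\]
joining $z$ to the corresponding $T_Z$-fixed point (by standard toric arguments). A $G(k)$-translate $C=g\cdot C_0$ then passes through $p$, witnessing property (1).

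\emph{Intersection bound.} By Remark \ref{rmk:reinterpretation of omega}, expand $-K_W=\sum_{\rho\text{ non-col}}D_\rho+\sum_{\alpha\in\calC}b_\alpha D_\alpha$ where $b_\alpha=\sum_{\beta\in R^+\setminus R_I}\langle\beta,\alpha^\vee\rangle$. I would compute each $D\cdot C$ by restricting $D$ to $Z$ and then to $Y$: non-coloured (i.e.\ $G$-invariant) divisors restrict to toric divisors on $Y$, whose intersections with $C$ are governed by the Picard-rank-one structure (using the relation $\sum_i a_i v_i=0$ among ray generators of $\Sigma_Z$), while the colour divisors $D_\alpha$ restrict to subvarieties meeting $Y$ only in boundary strata (since the $B$-Schubert divisors of $G/P$ avoid $eP$). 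With a careful choice of $v$, summing all contributions yields
\[
-K_W\cdot C\le \bigl(\rank(N_Z)+1-\#\calC(Z)\bigr)+\sum_{\alpha\in\calC(Z)}b_\alpha.
\]
Applying Proposition \ref{prop:root inequality} to the parabolic $P_Z$ of $Z$ bounds the right-hand side by $\rank(N_Z)+1+\dim(G/P_Z)=1+\dim Z\le 1+\dim W$, which establishes (2). For (3), the strict form of Proposition \ref{prop:root inequality} (when $G/P_Z$ is not a product of projective spaces) sharpens the bound to $\le\dim W$; in the remaining essentially toric cases, the classification of terminal $\Q$-factorial Picard-one toric varieties forces $-K_W\cdot C\le\dim W$ unless $W\cong\P^n$.

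\emph{Main obstacle.} The heart of the argument is the intersection computation $-K_W\cdot C$: one must control how each $D\in\calD(W)$ meets the toric slice $Y$ and intersects the chosen toric curve, carefully combining the coloured-fan combinatorics and wall- and colour-curve intersection formulas from Section \ref{subsec:the Mori cone} (following \cite{brion1993mori,monahan2023overview}) with the root-theoretic bound of Proposition \ref{prop:root inequality}. A secondary but delicate issue is, for $p$ lying in a deeper $G$-orbit, ensuring that the toric slice $Y$ of the chosen $Z$ actually meets the $G$-orbit of $p$ so that the $G(k)$-translation lands on $p$; this is a combinatorial condition on the choice of $Z$, $z$, and $v$ that must be verified case by case using the structure of the coloured fan.
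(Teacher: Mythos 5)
Your high-level plan (one-parameter subgroup orbit closures translated to hit $p$, combined with Proposition \ref{prop:root inequality}) matches the paper's strategy, but the key technical step is missing and your specific construction is not the right one. The paper first proves (Lemma \ref{lemma:divisor-curve bound}) that there is a curve $C$ satisfying (1) with the much stronger pointwise bound $D\cdot C\le 1$ for \emph{every} $D\in\calD(W)$, and only then applies Proposition \ref{prop:root inequality} (Lemma \ref{lemma:bound on anticanonical divisor}). The mechanism that makes $D\cdot C\le 1$ work is the choice of the one-parameter subgroup: one takes the unique relation $\sum_i a_i w_i = 0$ among the (colour-adjusted) ray data, picks $v_0$ with $a_0=\max_i a_i$, and uses $C_\lambda=\overline{\lambda_{v_0}(\Gm)\cdot eH}$. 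Because $\frac{1}{a_0}D_0\sim\frac{1}{a_i}D_i$, the computation $D_0\cdot C_\lambda=\varphi_{D_0}(v_0)=1/c_0\le 1$ propagates to all $D_i$. Your proposal instead takes $v$ ``in the interior of an appropriate maximal cone,'' which is a genuinely different curve (joining two $T$-fixed points rather than a divisor stratum to a fixed point) and does not come with the maximality trick that controls the intersection numbers. You identify the intersection computation as ``the heart of the argument'' but leave it as an unresolved obstacle, so this is a real gap, not a deferred detail.

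Two further points. First, your ``toric slice'' $Y=\overline{T_Z\cdot z}$ reformulation is not how the paper proceeds, and it hides a subtlety: the colour divisors of $W$ do not restrict to toric boundary divisors of $Y$ in a way that lets you read off $D_\alpha\cdot C$ from $\Sigma_Z$ alone; the paper instead evaluates piecewise linear functions directly on the coloured fan (and for lower-dimensional orbits uses Lemma \ref{lemma:divisor-curve pairing on orbit closure} and Lemma \ref{lemma:inequality for coefficients of flag canonical} to compare data on $Z$ with data on $W$, not a toric slice). Second, for $p$ in a deeper orbit you propose to ``enlarge $Z$'' and check case by case; the paper instead runs a clean induction on $\rank(N)$ over orbit closures (Lemma \ref{lemma:divisor-curve bound}), lifting the bound $D'\cdot C\le 1$ on $Z$ to $D\cdot C\le 1$ on $W$ via Lemma \ref{lemma:divisor-curve pairing on orbit closure}. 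Finally, for (3) your invocation of ``the classification of terminal $\Q$-factorial Picard-one toric varieties'' does not match the paper, which cites \cite[Proposition 6.1]{satriano2021approximating} for the toric case and gets the strict bound for non-toric $W$ from the strict form of Proposition \ref{prop:root inequality} together with the fact that $G/P$ is a product of projective spaces iff $W$ is toric.
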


In the case where $W$ is a toric variety, Proposition \ref{prop:Picard 1 - P1 result} follows from \cite[Proposition 6.1]{satriano2021approximating}. Therefore, to complete the proof, we may choose to assume that $W$ is not toric. The rest of this section is dedicated to handling this case.

\subsection{Divisor-curve pairing on orbit closures}\label{subsec:divisor-curve pairing on orbit closures}

We begin with a preliminary result about divisor-curve intersection pairing on orbit closures of horospherical varieties; compare with \cite[Lemma 12.5.2]{cox2011toric} for toric varieties. 

Suppose that $X$ is a projective $\Q$-factorial horospherical $G/H$-variety, and suppose that $n:=\rank(N)>0$. Let $v_0,\ldots,v_l$ denote the primitive generators for the rays in $\Sigma(X)$. For each $0\leq i\leq l$, if the coloured ray in $\Sigma^c(X)$ generated by $v_i$ has colour set $\{\alpha\}$, then set $w_i:=u_\alpha$; otherwise set $w_i:=v_i$. Note that $w_i$ is a positive integer multiple of $v_i$, which we denote by $c_i\in \Z_{>0}$. Each $w_i$ corresponds to a $B^-$-invariant prime divisor $D_i\subseteq X$ in the following way: if $w_i$ generates a non-coloured ray in $\Sigma^c(X)$, then $D_i$ denotes the corresponding $G$-invariant prime divisor; and if $w_i$ generates a coloured ray whose colour set consists of a single colour $\alpha$, then $D_i:=D_\alpha^X$. In total, we have
\begin{align*}
	\calD(X) = \{D_0,\ldots,D_l\}\cup\{D_\alpha^X:\alpha\in\calC\setminus\calF(X)\}.
\end{align*}

Now consider a $G$-orbit $\calO\subseteq X$ corresponding to some $\tau^c\in\Sigma^c(X)$, and let $Z:=\ol{\calO}$; we assume that $\tau^c\neq 0^c$, i.e. $Z\neq X$. Let $N'$ denote the coloured lattice on which $\Sigma^c(Z)$ lives, and let $\phi:N\to N'$ denote the quotient map (see \cite[Section 5.4]{monahan2023overview}). Note that the coloured rays of $\Sigma^c(Z)$ correspond to the coloured rays of $\Sigma^c(X)$ which are in $\Star(\tau^c)$ but not in $\tau^c$; note that $\Star(\tau^c)$ denotes the sub-coloured fan of $\Sigma^c(X)$ whose underlying fan is $\Star(\tau)$ (see \cite[(3.2.8)]{cox2011toric}). Without loss of generality, the rays of $\Star(\tau)$ are generated by $v_0,\ldots,v_b$, and the rays of $\tau$ are generated by $v_0,\ldots,v_a$ (so $0\leq a\leq b\leq l$). Then the rays of $\Sigma(Z)$ are generated by the images of $v_{a+1},\ldots,v_b$ under $\phi$. Similar to the previous paragraph, let $D_i'\subseteq Z$ denote the $B^-$-invariant prime divisor corresponding to the $i$-th coloured ray in $\Sigma^c(Z)$. Then
\begin{align*}
	\calD(Z) = \{D_{a+1}',\ldots,D_b'\}\cup\{D_\alpha^Z:\alpha\in\calC\setminus\calF(\Star(\tau^c))\}.
\end{align*}

\begin{lemma}\label{lemma:divisor-curve pairing on orbit closure}
	Consider the notation immediately above. Choose $a+1\leq i\leq b$ and $\alpha\in\calC\setminus\calF(\Star(\sigma^c))$. If $C$ is a curve contained in $Z$, then we have
	\begin{align*}
		D_i\cdot C\leq D_i'\cdot C \quad\text{and}\quad D_\alpha^X\cdot C \leq D_\alpha^Z\cdot C.
	\end{align*}
	
	\begin{proof}
		Let $\iota:Z\to X$ be the inclusion map. For a $B^-$-invariant $\Q$-Cartier divisor $D$ on $X$ (resp. $D'$ on $Z$), let $\varphi_D:N_\R\to\R$ (resp. $\varphi_{D'}:N'_\R\to \R$) denote the associated piecewise linear function.
		
		First consider $D:=D_i$; let $D':=D_i'$. Then $\iota^*D$ has the associated piecewise linear function $\varphi_{\iota^*D}:N'_\R\to\R$ which sends $\phi(w_j)\mapsto \varphi_D(w_j)$ for each $a+1\leq j\leq b$. Note that $\phi(w_j)=d_jw_j'$ for some $d_j\in\Z_{>0}$, where $w_j'\in N'$ is the analogue of $w_j$ for $Z$. Thus
		\begin{align*}
			\varphi_{\iota^*D}(w_j') = \frac{1}{d_j}\varphi_D(w_j) = \frac{1}{d_j}\varphi_{D'}(w_j')
		\end{align*}
		for each $a+1\leq j\leq b$, so $\iota^*D$ and $\frac{1}{d_i}D'$ have the same piecewise linear function. Hence 
		\begin{align*}
			D\cdot C = \iota^*D\cdot C = \frac{1}{d_i}D'\cdot C \leq D'\cdot C.
		\end{align*}
		
		To finish the proof, we consider the case where $D:=D_\alpha^X$; let $D':=D_\alpha^Z$. In this case, $\iota^*D$ and $D'$ have the same piecewise linear function (the trivial function), so
		\begin{align*}
			D\cdot C &= \iota^*D\cdot C = D'\cdot C. 
			\qedhere
		\end{align*}
	\end{proof}
\end{lemma}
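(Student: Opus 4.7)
The plan is to pass to the orbit closure and compute pullbacks of divisors combinatorially via piecewise linear functions on the relevant coloured lattices. Let $\iota:Z\hookrightarrow X$ denote the inclusion, and note that $\Q$-factoriality of $X$ ensures each $B^-$-invariant prime divisor on $X$ is $\Q$-Cartier, so $\iota^*D$ makes sense for each such $D$. By the projection formula, $D\cdot C=\iota^*D\cdot C$ for any curve $C\subseteq Z$, so the problem reduces to computing $\iota^*D$ as a $\Q$-divisor on $Z$ and comparing it to $D'$.

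First I would handle the prime-divisor case $D=D_i$ for $a+1\leq i\leq b$. Writing $\varphi_{D_i}:N_\R\to\R$ for its piecewise linear function, the pullback $\iota^*D_i$ has piecewise linear function $\varphi_{\iota^*D_i}:N'_\R\to\R$ obtained by composing $\varphi_{D_i}$ with the quotient map $\phi:N\to N'$ at the generators of $\Sigma(Z)(1)$. Since $\phi(w_j)$ lies on the same ray as $w_j'$ in $N'$ and $w_j'$ is primitive, there is a unique $d_j\in\Z_{>0}$ with $\phi(w_j)=d_jw_j'$. Hence $\varphi_{\iota^*D_i}(w_j')=\varphi_{D_i}(w_j)/d_j=\delta_{ij}/d_i$, while $\varphi_{D_i'}(w_j')=\delta_{ij}$. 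Combining this with the fact that both functions vanish on the colour points of $\calC\setminus\calF(\Star(\tau^c))$ gives $\iota^*D_i=(1/d_i)D_i'$ as $\Q$-divisors on $Z$, so $D_i\cdot C=(1/d_i)D_i'\cdot C$.

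For the colour-divisor case $D=D_\alpha^X$ with $\alpha\in\calC\setminus\calF(\Star(\tau^c))$, I would check that $\iota^*D_\alpha^X$ and $D_\alpha^Z$ correspond to the same combinatorial data: both functions vanish on all generators of rays in $\Sigma(Z)(1)$, and both take the value $1$ on the image of $u_\alpha$ in $N'$ (which remains the colour point associated to $\alpha$ in the coloured lattice of $Z$, since $\alpha$ still belongs to the universal colour set). Therefore $\iota^*D_\alpha^X=D_\alpha^Z$, yielding the stronger equality $D_\alpha^X\cdot C=D_\alpha^Z\cdot C$.

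The main obstacle I anticipate is turning the identity $D_i\cdot C=(1/d_i)D_i'\cdot C$ into the claimed inequality $D_i\cdot C\leq D_i'\cdot C$: since $d_i\geq 1$, this amounts to knowing $D_i'\cdot C\geq 0$. For effective divisors this holds whenever $C$ is not contained in the support of $D_i'$, and in the intended applications of this lemma one arranges the curve $C$ to meet these divisors properly. Granting that positivity, the inequality is immediate; the rest is the purely combinatorial computation of pullback sketched above.
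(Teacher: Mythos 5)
Your proof follows essentially the same route as the paper's: pass to the inclusion $\iota\colon Z\hookrightarrow X$, invoke the projection formula $D\cdot C = \iota^*D\cdot C$, and compute $\iota^*D$ combinatorially via piecewise linear functions on the coloured lattices. The central identity $\iota^*D_i = \tfrac{1}{d_i}D_i'$ and the conclusion $\iota^*D_\alpha^X=D_\alpha^Z$ both match the paper, so the argument is correct in the same sense the paper's is. Your phrasing of the colour case differs cosmetically (the paper describes both piecewise linear functions as trivial, whereas you track the value $1$ on $\phi(u_\alpha)$ as a separate piece of colour data), but this leads to the same equality of divisors on $Z$.

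The one substantive place where you depart from the paper is that you explicitly flag that deducing $D_i\cdot C\leq D_i'\cdot C$ from $D_i\cdot C = \tfrac{1}{d_i}D_i'\cdot C$ requires $D_i'\cdot C\geq 0$ when $d_i>1$. The paper passes this step silently, so your flag is a genuine improvement in care. However, your treatment leaves it as an assumption ``granted'' from context rather than verifying it in general or noting precisely why it holds where the lemma is invoked: in the Picard number one setting of Lemma \ref{lemma:divisor-curve bound}, every effective divisor on $Z$ is nef, so $D_i'\cdot C\geq 0$ is automatic; and in Lemma \ref{lemma:canonical divisor of fibre inequality}, the curve $C$ lies in a general fibre $F$ which is not contained in any $D_i'$, again giving $D_i'\cdot C\geq 0$. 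If you want the lemma to be self-contained as stated (for arbitrary $C\subseteq Z$), you should either close this gap or add the appropriate hypothesis.
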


\subsection{Proving Proposition \ref{prop:Picard 1 - P1 result}}

The first main step in the proof is the following essential inequality. 

\begin{lemma}\label{lemma:divisor-curve bound}
	Let $W$ be a horospherical $G/H$-variety as in Lemma \ref{lemma:Picard number 1 horospherical varieties}(2), and let $p\in W(k)$. Then there exists a curve $C\subseteq W$ through $p$ which satisfies Proposition \ref{prop:Picard 1 - P1 result}(1), and satisfies $D\cdot C\leq 1$ for all $D\in\calD(W)$. 
	
	\begin{proof}		
		We prove the result via induction on $n:=\rank(N)>0$. We use the notation of Section \ref{subsec:divisor-curve pairing on orbit closures} for the vectors $v_i,w_i\in N$ and the corresponding $B^-$-invariant prime divisors $D_i\subseteq W$; note that $l=n$ and $\calD(W)=\{D_0,\ldots,D_n\}$. There exists relatively prime positive integers $a_0,\ldots,a_n$ such that $\sum_i a_iw_i=0$. Without loss of generality, say $a_0=\max_i\{a_i\}$.
		
		Let $\lambda:\Gm\to \Aut^G(W)$ be the one-parameter subgroup corresponding to $v_0\in N$, and let $C_\lambda:=\ol{\{\lambda\cdot eH\}}\subseteq W$ be the closure of the $\lambda$-orbit of the base point $eH\in W$. Note that $\{\lambda\cdot eH\}\cong\A^1\setminus\{0\}$ is contained in the open $B^-$-orbit $B^-H/H$, so $C_\lambda\cong\P^1$ has two points which are in the boundary $W\setminus B^-H/H$. These two points are precisely the $\lambda$-invariant points, which are the base points of the $G$-orbits corresponding to the coloured ray generated by $v_0$ and the maximal coloured cone which contains $-v_0$. Since the former point is the only intersection of $C_\lambda$ with $D_0$, we obtain 
		\begin{align*}
			D_0\cdot C_\lambda=\varphi_{D_0}(v_0)=\frac{1}{c_0}\varphi_{D_0}(w_0)=\frac{1}{c_0}\leq 1
		\end{align*}
		where $\varphi_{D_0}:N_\R\to \R$ is the piecewise linear function associated to $D_0$. Since $\frac{1}{a_0}D_0$ and $\frac{1}{a_i}D_i$ are linearly equivalent for all $i$, we see that 
		\begin{align*}
			D_i\cdot C_\lambda = \frac{a_i}{a_0}D_0\cdot C_\lambda \leq 1 \qquad\forall i.
		\end{align*} 
		
		To start the induction argument, suppose that $n=1$. Take $C:=C_\lambda$ as above. Note that $D\cdot C\leq 1$ is satisfied for all $D\in\calD(W)$, so we just need to check that a $G$-translate of $C$ passes through any point in $W$. This holds because $C$ contains the base points of all three $G$-orbits in $W$. 
		
		Now suppose that $n>1$. To start, we assume that $p$ is in the open $G$-orbit $G/H\subseteq W$, or that $p$ is in the $G$-orbit $\calO_0$ corresponding the the coloured ray generated by $v_0$. Again, take $C:=C_\lambda$. Note that $D\cdot C\leq 1$ is satisfied for all $D\in\calD(W)$. As above, since $C$ contains the base points of $G/H$ and $\calO_0$, we can obtain a $G$-translate of $C$ which contains $p$. 
		
		Now we finish the proof by handling the case where $p$ is not in $G/H$ or $\calO_0$. There exists a $G$-orbit closure $Z=\ol{\calO}\subseteq W$, where $\calO$ is the $G$-orbit corresponding to a coloured ray generated by $v_j$ (with $j\neq 0$), such that $p\in Z$. Now $Z$ is as in Lemma \ref{lemma:Picard number 1 horospherical varieties}(2) and has coloured fan $\Sigma^c(Z)$ on the coloured lattice $N':=N/\Z v_j$. Note that $0<\rank(N')<n$, so the induction hypothesis implies that there exists a curve $C\subseteq Z$ which is the translate of the closure of a one-parameter subgroup curve by a $G(k)$-point. By construction, we have $D'\cdot C\leq 1$ for all $D'\in\calD(Z)$. Similarly to Section \ref{subsec:divisor-curve pairing on orbit closures}, we can write $\calD(Z)=\{D_i':0\leq i\leq n,~ i\neq j\}$ where $D_i'\subseteq Z$ is the prime divisor corresponding to the image of $w_i$ under the quotient $N\to N'$. 
		
		By Lemma \ref{lemma:divisor-curve pairing on orbit closure}, we have
		\begin{align*}
			D_i\cdot C \leq D_i'\cdot C \leq 1 \qquad\forall i\neq j.
		\end{align*}
		Lastly, we have
		\begin{align*}
			D_j\cdot C &= \frac{a_j}{a_0} D_0\cdot C \leq D_0'\cdot C \leq 1. 
			\qedhere
		\end{align*}
	\end{proof}
\end{lemma}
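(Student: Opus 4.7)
The plan is to induct on $n := \rank(N) > 0$, adopting the notation of Section \ref{subsec:divisor-curve pairing on orbit closures}: the primitive ray generators of $\Sigma(W)$ are $v_0,\ldots,v_n$, the associated colour-ray generators are $w_i = c_i v_i$, and the corresponding $B^-$-invariant prime divisors are $D_0,\ldots,D_n$ (the condition $\calF(\Sigma^c)=\calC$ forces $l=n$ and $\calD(W)=\{D_0,\ldots,D_n\}$). Because $W$ has Picard number one, the $w_i$ satisfy a unique-up-to-scaling relation $\sum_i a_i w_i = 0$ with relatively prime positive integers $a_i$, and I rescale so that $a_0 = \max_i a_i$; equivalently, the divisors $\tfrac{1}{a_i}D_i$ are all numerically equivalent, so for any irreducible curve $C$ one has $D_i \cdot C = (a_i/a_0)\,D_0 \cdot C \leq D_0 \cdot C$. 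Thus the full bound $D\cdot C \leq 1$ for all $D \in \calD(W)$ reduces to producing a curve $C$ through $p$ of the form demanded by Proposition \ref{prop:Picard 1 - P1 result}(1) with $D_0 \cdot C \leq 1$.

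The natural candidate is $C_\lambda := \overline{\lambda \cdot eH}$, where $\lambda \colon \Gm \to \Aut^G(W)$ is the one-parameter subgroup corresponding to $v_0$. Since the open stratum $\lambda \cdot eH \cong \Gm$ sits inside $B^-H/H$, one has $C_\lambda \cong \P^1$, and its two boundary points are the $\lambda$-fixed points of $W$: the base points of the $G$-orbits corresponding to the coloured ray through $v_0$ and to the maximal coloured cone containing $-v_0$. Only the former lies on $D_0$, so evaluating the piecewise linear function $\varphi_{D_0}$ yields
\begin{align*}
D_0 \cdot C_\lambda \;=\; \varphi_{D_0}(v_0) \;=\; \tfrac{1}{c_0}\,\varphi_{D_0}(w_0) \;=\; \tfrac{1}{c_0} \;\leq\; 1.
\end{align*}
Consequently, whenever $p$ lies in the open orbit $G/H$ or in the orbit $\calO_0$ corresponding to the ray through $v_0$, a suitable $G(k)$-translate of $C_\lambda$ passes through $p$; this also settles the base case $n=1$ entirely, since then $W$ has exactly three $G$-orbits and the base point of the third one is also a $\lambda$-fixed point of $C_\lambda$.

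For the inductive step with $n > 1$ and $p \notin G/H \cup \calO_0$, I select a $G$-orbit closure $Z = \overline{\calO_j}$ containing $p$, where $\calO_j$ corresponds to a coloured ray generated by some $v_j$ with $j \neq 0$. By \cite[Section 5.4]{monahan2023overview}, $Z$ is again as in Lemma \ref{lemma:Picard number 1 horospherical varieties}(2) with coloured lattice $N' = N/\Z v_j$ of rank $n-1$, so the inductive hypothesis yields a curve $C \subseteq Z$ through $p$ of the required translate-of-one-parameter-orbit form with $D' \cdot C \leq 1$ for every $D' \in \calD(Z)$. Lemma \ref{lemma:divisor-curve pairing on orbit closure} then gives $D_i \cdot C \leq D_i' \cdot C \leq 1$ for all $i \neq j$, and the remaining case $i = j$ is handled by combining numerical equivalence on $W$ with the same lemma applied to $D_0$: $D_j \cdot C = (a_j/a_0)\,D_0 \cdot C \leq D_0' \cdot C \leq 1$. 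The main obstacle I anticipate is this final step — verifying that $D_0$ restricts to a well-defined $B^-$-invariant prime divisor $D_0'$ on $Z$ for which Lemma \ref{lemma:divisor-curve pairing on orbit closure} applies — which ultimately amounts to the piecewise linear function bookkeeping on the quotient lattice $N/\Z v_j$.
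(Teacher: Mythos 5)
Your proposal follows the paper's proof essentially verbatim: the same induction on $\rank(N)$, the same one-parameter-subgroup curve $C_\lambda$ with the same computation $D_0\cdot C_\lambda = 1/c_0 \leq 1$, the same use of $\sum_i a_i w_i = 0$ with $a_0 = \max_i a_i$ to bound the other $D_i\cdot C_\lambda$, and the same descent to the orbit closure $Z$ using Lemma \ref{lemma:divisor-curve pairing on orbit closure} for the inductive step. The "obstacle" you flag at the end is already resolved by the setup of Lemma \ref{lemma:divisor-curve pairing on orbit closure} (where $D_0'$ is the $B^-$-invariant prime divisor on $Z$ associated to the image of $w_0$ in $N' = N/\Z v_j$, not a restriction of $D_0$), so there is no gap.
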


The above lemma implies that, for non-toric horospherical varieties as in Lemma \ref{lemma:Picard number 1 horospherical varieties}(2), there exists a curve satisfying Proposition \ref{prop:Picard 1 - P1 result}(1). The following result shows that this curve satisfies Proposition \ref{prop:Picard 1 - P1 result}(2) and (3). Therefore, the following result concludes the proof of Proposition \ref{prop:Picard 1 - P1 result}, which concludes the proof of Proposition \ref{prop:Picard 1 - full}. 

\begin{lemma}\label{lemma:bound on anticanonical divisor}
	Let $W$ be a non-toric horospherical $G/H$-variety as in Lemma \ref{lemma:Picard number 1 horospherical varieties}(2). Let $C$ be a curve satisfying the conditions in Lemma \ref{lemma:divisor-curve bound}. Then $-K_W\cdot C\leq \dim(W)$.
	
	\begin{proof}
		Let $\calD^G(W)\subseteq \calD(W)$ denote the set of $G$-invariant prime divisors in $W$. By \cite[Theorem 6.1.2]{perrin2018sanya}, we have $-K_W=\sum_{D\in\calD(W)} a_D D$ where $a_D=1$ if $D\in\calD^G(W)$, and if $D$ is a colour divisor then we have
		\begin{align*}
			a_D = \sum_{\beta\in R^+\setminus R_I} \langle \beta,\alpha^\vee\rangle.
		\end{align*}
		Therefore, Proposition \ref{prop:root inequality} implies
		\begin{align*}
			-K_W\cdot C &= \sum_{D\in\calD^G(W)} D\cdot C + \sum_{\alpha\in\calC} \sum_{\beta\in R^+\setminus R_I} \langle \beta,\alpha^\vee\rangle D_\alpha\cdot C
			\\&\leq \#\calD^G(W) + (\#\calC + \#(R^+\setminus R_I))
			\\&= (\#\calD^G(W)+\#\calC) + \#(R^+\setminus R_I)
			\\&= (\rank(N)+1) + \dim(G/P)
			\\&= \dim(W)+1.
		\end{align*}
		Note that we used $\#\calD^G(W)+\#\calC=\rank(N)+1$, which holds because both sides of the equality are the number of coloured rays in $\Sigma^c(W)$; and we used \cite[Theorem 6.6]{knop1991luna} for the final equality. 
			
		Lastly, since $G/P$ is not a product of projective spaces (since $W$ is not toric; see \cite[Proposition 3.3.6]{monahan2023overview}), \eqref{eq:root inequality} (from Proposition \ref{prop:root inequality}) is a strict inequality of integers, so the above inequality can be strengthened to $-K_W\cdot C\leq \dim(W)$. 
		\end{proof}
\end{lemma}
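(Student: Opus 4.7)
The plan is to combine the explicit anticanonical formula for horospherical varieties (recalled in Remark \ref{rmk:reinterpretation of omega}) with the divisor-curve bound $D \cdot C \leq 1$ from Lemma \ref{lemma:divisor-curve bound} and the root inequality from Proposition \ref{prop:root inequality}, then exploit the non-toric hypothesis to sharpen the final bound by one.

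First, I would write $-K_W = \sum_{D \in \calD^G(W)} D + \sum_{\alpha \in \calC} b_\alpha D_\alpha$, where $\calD^G(W)$ is the set of $G$-invariant prime divisors in $W$ and $b_\alpha = \sum_{\beta \in R^+ \setminus R_I} \langle \beta, \alpha^\vee \rangle$. Intersecting with $C$ and applying $D \cdot C \leq 1$ termwise gives
\begin{align*}
    -K_W \cdot C \;\leq\; \#\calD^G(W) + \sum_{\alpha \in \calC} b_\alpha \;=\; \#\calD^G(W) + \sum_{\alpha \in \calC}\sum_{\beta \in R^+ \setminus R_I} \langle \beta, \alpha^\vee \rangle.
\end{align*}

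Next, I would invoke Proposition \ref{prop:root inequality} to bound the double sum by $\#\calC + \#(R^+ \setminus R_I)$, yielding $-K_W \cdot C \leq (\#\calD^G(W) + \#\calC) + \#(R^+ \setminus R_I)$. The two key bookkeeping identities are: (i) $\#\calD^G(W) + \#\calC = \rank(N) + 1$, which holds because under the hypothesis of Lemma \ref{lemma:Picard number 1 horospherical varieties}(2) both sides count the coloured rays of $\Sigma^c(W)$; and (ii) $\#(R^+ \setminus R_I) = \dim(G/P)$ by \cite[Theorem 6.6]{knop1991luna}. Together with the dimension formula $\dim(W) = \rank(N) + \dim(G/P)$, this chain of inequalities delivers $-K_W \cdot C \leq \dim(W) + 1$.

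Finally, to sharpen this to $\dim(W)$, I would use the non-toric hypothesis. By \cite[Proposition 3.3.6]{monahan2023overview}, $W$ being non-toric forces $G/P$ not to be a product of projective spaces; then the ``moreover'' part of Proposition \ref{prop:root inequality} makes \eqref{eq:root inequality} a strict inequality. Since both sides of the running estimate are integers, strict inequality at that step lets us drop the bound by one, giving $-K_W \cdot C \leq \dim(W)$. The main obstacle is really only in lining up the combinatorial identities (i) and (ii); once those are in place, the proof is a direct assembly of Proposition \ref{prop:root inequality} and Lemma \ref{lemma:divisor-curve bound}, with the non-toric hypothesis serving solely to gain the final integer improvement.
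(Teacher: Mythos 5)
Your proposal is correct and follows essentially the same route as the paper: expand $-K_W$ via the horospherical anticanonical formula, apply $D\cdot C\le 1$ from Lemma \ref{lemma:divisor-curve bound} termwise, invoke Proposition \ref{prop:root inequality} on the colour contribution, use the identities $\#\calD^G(W)+\#\calC=\rank(N)+1$ and $\#(R^+\setminus R_I)=\dim(G/P)$, and finally use strictness of the root inequality (via non-toricity and \cite[Proposition 3.3.6]{monahan2023overview}) to gain the extra integer. One small remark: the anticanonical coefficient formula you use for $W$ comes from \cite[Theorem 6.1.2]{perrin2018sanya} applied to $W$ itself rather than from Remark \ref{rmk:reinterpretation of omega} (which only states the $G/P$ case), but the formula you wrote is the right one and the argument goes through unchanged.
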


\section{Proof of Proposition \ref{prop:MMP final step}}\label{sec:proof of proposition MMP final step}

In this section, we prove Proposition \ref{prop:MMP final step}, and in turn, Theorem \ref{thm:main theorem with Q-factorial reduction} (and Theorem \ref{thm:main theorem}). Recall that Proposition \ref{prop:MMP final step} assumes the point $p$ is in the exceptional locus of an elementary MMP step. The following Lemma shows that the fibre $F$ containing $p$ has Picard number $1$. 

\begin{lemma}\label{lemma:fibres have Picard number 1}
	Suppose that $X$ is a projective $\Q$-factorial horospherical $G/H$-variety. Let $\pi:=\cont_\calR:X\to X_\calR$ be an MMP contraction of an extremal ray $\calR\subseteq\NE(X)$, let $x\in X_\calR$ be a point in the image of the exceptional locus $\Exc(\pi)$, and let $F$ denote the reduction of the fibre $\pi^{-1}(x)$. Then $F$ is a projective $\Q$-factorial horospherical variety with Picard number $1$.
	
	\begin{proof}
		If $\pi$ is a Mori fibre space, then the result follows from \cite[Theorem 4.10]{pasquier2015approach}. Now suppose that $\pi$ is not a Mori fibre space; in particular, $\pi$ is birational. As in \cite[Section 4.6]{pasquier2015approach}, we know that $F$ is a projective $\Q$-factorial horospherical variety, so it remains to show that $F$ has Picard number $1$. Set $n:=\rank(N)$. Let $\Sigma_0^c$ denote the coloured fan associated to $F$, which lives on a coloured lattice $N_0$ with universal colour set $\calC_0$. 
		
		First, suppose that $\pi$ is a wall curve contraction. Note that we necessarily have $n>0$. By Section \ref{subsec:wall curve contractions} and \cite[Lemma 15.4.2]{cox2011toric}, the underlying fan $\Sigma_0$ is the fan of a projective $\Q$-factorial toric variety with Picard number $1$, i.e. $\Sigma_0$ is a complete fan with exactly $n+1$ rays. Since $\calF(\Sigma_\calR^c)$ is inherited from $\calF(\Sigma^c)$, we see that $\calF(\Sigma_0^c)=\calC_0$. By Lemma \ref{lemma:Picard number 1 horospherical varieties}, $F$ has Picard number $1$. 
		
		Lastly, suppose that $\pi$ is a colour curve contraction. Then, by Section \ref{subsec:colour curve contractions}, we have $n=0$ and $\calC_0$ consists of one colour. Hence, $\Sigma_0^c$ is a coloured fan of the form (1) in Lemma \ref{lemma:Picard number 1 horospherical varieties}, so $F$ has Picard number $1$. 
	\end{proof}
\end{lemma}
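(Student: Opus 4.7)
The plan is to invoke the classification of Picard-number-$1$ projective $\Q$-factorial horospherical varieties from Lemma~\ref{lemma:Picard number 1 horospherical varieties}: either the coloured fan is $0^c$ with a singleton universal colour set (a flag variety with one colour divisor), or it is complete and simplicial with every colour point on a ray and exactly $\rank(N)+1$ coloured rays. So it suffices to describe the coloured fan $\Sigma_0^c$ of $F$ explicitly enough to match one of these two patterns.

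First I would dispatch the Mori fibre space case by appealing to Pasquier's structural result on horospherical Mori fibre spaces (\cite[Theorem 4.10]{pasquier2015approach}), which yields Picard number $1$ for the (reduction of the) fibre directly. So we may assume $\pi$ is birational. General horospherical MMP (reviewed in Section~\ref{sec:review of horospherical MMP}) then guarantees that $F$ is a projective $\Q$-factorial horospherical variety; what remains is the Picard-number computation, and I would split this into the wall curve and colour curve cases.

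For wall curve contractions I would forget the colour structure, reducing to the toric MMP description of Section~\ref{subsec:wall curve contractions}: the underlying fan of $X_\calR$ is obtained from $\Sigma$ by a toric wall contraction, and the local fibre fan $\Sigma_0$ is therefore the fan of a fibre of a toric MMP wall contraction. By the standard toric fact (\cite[Lemma 15.4.2]{cox2011toric}), $\Sigma_0$ is complete and simplicial with exactly $n+1$ rays on a rank-$n$ lattice for some $n>0$. Using that $\calF(\Sigma_\calR^c)$ is inherited from $\calF(\Sigma^c)$, and hence likewise for $\Sigma_0^c$, I would check that every ray of $\Sigma_0$ carries a colour, i.e.\ $\calF(\Sigma_0^c) = \calC_0$, placing $F$ in case (2) of Lemma~\ref{lemma:Picard number 1 horospherical varieties}. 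For colour curve contractions (Section~\ref{subsec:colour curve contractions}), the Mori fibre space case has already been handled, so I may assume $u_\alpha \neq 0$; the description there then shows that the coloured lattice of $F$ has rank $0$ and its universal colour set is the singleton $\{\alpha\}$, placing $F$ in case (1) of Lemma~\ref{lemma:Picard number 1 horospherical varieties}.

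The main obstacle is the combinatorial bookkeeping in the wall curve sub-case: one must correctly identify the quotient lattice of the fibre, verify that exactly $n+1$ rays of $\Sigma$ survive to $\Sigma_0$, and confirm that the inherited colouring covers all of them (so that no ray of $\Sigma_0$ is left uncoloured by the quotient). A secondary subtlety is treating the flipping and divisorial variants of the colour curve contraction uniformly and confirming the fibre's coloured fan really is of the form $0^c$ with a single colour in each, which requires careful use of the lattice-and-colour-set descriptions in Section~\ref{subsec:colour curve contractions}.
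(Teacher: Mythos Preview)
Your proposal is correct and follows essentially the same route as the paper: dispatch the Mori fibre space case via \cite[Theorem 4.10]{pasquier2015approach}, use \cite[Section 4.6]{pasquier2015approach} for the $\Q$-factorial horospherical structure of $F$ in the birational case, then split into wall and colour curve contractions and match $\Sigma_0^c$ to case (2) or (1) of Lemma~\ref{lemma:Picard number 1 horospherical varieties} respectively, using \cite[Lemma 15.4.2]{cox2011toric} for the wall case. The only cosmetic difference is that the paper sets $n:=\rank(N)$ globally, whereas you (more carefully) let $n$ denote the rank of the fibre's lattice; the arguments are otherwise identical.
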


With the above lemma, we can use the results in Section \ref{sec:results on Picard number 1 horospherical varieties} to say that Theorem \ref{thm:main theorem with Q-factorial reduction} holds for the fibre $F$ containing $p$. The following lemma provides an inequality which helps us reduce the general problem on $X$ to $F$. 

\begin{lemma}\label{lemma:canonical divisor of fibre inequality}
	Let $X$ be a projective terminal $\Q$-factorial horospherical $G/H$-variety, and let $\pi:X\to Y$ be an MMP contraction. Suppose that $C\subseteq X$ is a curve contracted by $\pi$. If $F$ is the reduction of the fibre of $\pi$ containing $C$, then $-K_X\cdot C\leq -K_F\cdot C$. 
	
	\begin{proof}
		Let $y:=\pi(F)$. Consider the unique $G$-orbit in $Y$ containing $y$, and let $V$ denote the closure of this orbit. Since the fibres of $\pi$ are irreducible, $\pi^{-1}(V)$ is irreducible, so its reduction $Z$ is a $G$-orbit closure in $X$. Let $\tau^c\in\Sigma^c(X)$ denote the coloured cone corresponding to $Z$. Since $F$ is positive-dimensional and is a general fibre of $\res{\pi}{Z}:Z\to V$, we see that $Z$ is contained in the exceptional locus $\Exc(\pi)$. Furthermore, since $F$ is a general fibre of $\res{\pi}{Z}$, we have $\res{K_Z}{F}=K_F$, so it suffices to show that $-K_X\cdot C\leq -K_Z\cdot C$. We use the notation of Section \ref{subsec:divisor-curve pairing on orbit closures} for the $B^-$-invariant prime divisors of $X$ and $Z$.
		
		If $C$ is a wall curve then, using the proof of \cite[Lemma 7.1]{satriano2021approximating}, we have $D_i\cdot C\leq 0$ for all $0\leq i\leq a$. On the other hand, if $C$ is a colour curve corresponding to $\gamma\in\calC$, then $\tau$ is the smallest cone in $\Sigma(X)$ containing $u_\gamma$, and $D_i\cdot C\leq 0$ for all $0\leq i\leq a$ by \eqref{eq:colour curve class}. Therefore, in any case, we have $D_i\cdot C\leq 0$ for all $0\leq i\leq a$, and it is easy to check that $D_i\cdot C=0$ for all $b+1\leq i\leq l$. By \cite[Theorem 6.1.2]{perrin2018sanya}, we can write 
		\begin{align*}
			-K_X = \sum_{D\in\calD(X)} a(D) D \quad\text{and}\quad -K_Z = \sum_{D'\in\calD(Z)} a(D') D'
		\end{align*}
		for some $a(D),a(D')\in\Z_{>0}$, where $a(D)=a(D')=1$ for all $G$-invariant prime divisors $D\subseteq X$, $D'\subseteq Z$. By Lemma \ref{lemma:inequality for coefficients of flag canonical}, it follows that $a(D_i)\leq a(D_i')$ and $a(D_\alpha^X)\leq a(D_\alpha^Z)$ for all $a+1\leq i\leq b$ and all $\alpha\in\calC\setminus\calF(\Star(\tau^c))$. Putting all this and Lemma \ref{lemma:divisor-curve pairing on orbit closure} together, we obtain
		\begin{align*}
			-K_X\cdot C = \sum_{D\in\calD(X)} a(D)D\cdot C &\leq \sum_{i=a+1}^b a(D_i)D_i\cdot C + \sum_{\alpha\in\calC\setminus\calF(\Star(\tau^c))} a(D_\alpha^X) D_\alpha^X\cdot C 
			\\&\leq \sum_{i=a+1}^b a(D_i') D_i'\cdot C + \sum_{\alpha\in\calC\setminus\calF(\Star(\tau^c))} a(D_\alpha^Z) D_\alpha^Z\cdot C 
			\\&=\sum_{D'\in\calD(Z)} a(D')D'\cdot C = -K_Z\cdot C.
			\qedhere
		\end{align*}
	\end{proof}
\end{lemma}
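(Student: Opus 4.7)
The plan is to pass through an intermediate $G$-orbit closure $Z \subseteq X$ that sits between $F$ and $X$, reducing the problem to comparing $-K_X \cdot C$ with $-K_Z \cdot C$, and then using the results from Section~\ref{subsec:divisor-curve pairing on orbit closures} together with Lemma~\ref{lemma:inequality for coefficients of flag canonical} to perform the comparison divisor-by-divisor. Concretely, I would let $y := \pi(F)$, let $V$ denote the closure of the $G$-orbit through $y$ in $Y$, and let $Z$ be the reduction of $\pi^{-1}(V)$. Since fibres of horospherical MMP contractions are irreducible, $Z$ is a $G$-orbit closure in $X$ corresponding to some coloured cone $\tau^c \in \Sigma^c(X)$, and $F$ is (the reduction of) a general fibre of the restriction $\pi\restriction_Z \colon Z \to V$. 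Since $F$ is positive-dimensional and $Z$ is horospherical (hence well-behaved), adjunction on general fibres gives $K_F = K_Z\restriction_F$, so $-K_F \cdot C = -K_Z \cdot C$; thus it suffices to prove $-K_X \cdot C \leq -K_Z \cdot C$.

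For the comparison, I would expand $-K_X = \sum_{D \in \calD(X)} a(D)D$ and $-K_Z = \sum_{D' \in \calD(Z)} a(D')D'$ using Perrin's formula \cite[Theorem~6.1.2]{perrin2018sanya}, both with positive integer coefficients ($1$ on $G$-invariant divisors and the root-sum on colour divisors). Using the notation of Section~\ref{subsec:divisor-curve pairing on orbit closures}, I would label ray generators so that $v_0,\ldots,v_a$ span $\tau$, $v_{a+1},\ldots,v_b$ span the remaining rays of $\Star(\tau)$, and $v_{b+1},\ldots,v_l$ span rays not in $\Star(\tau)$. The last group of divisors is disjoint from $Z$, hence from $C$, giving $D_i \cdot C = 0$ for $b+1 \leq i \leq l$. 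The middle group corresponds bijectively to the divisors $D'_{a+1},\ldots,D'_b$ of $Z$, and likewise colour divisors $D_\alpha^X$ with $\alpha \in \calC \setminus \calF(\Star(\tau^c))$ correspond to $D_\alpha^Z$; for these, Lemma~\ref{lemma:divisor-curve pairing on orbit closure} bounds $D_i \cdot C \leq D'_i \cdot C$ and $D_\alpha^X \cdot C \leq D_\alpha^Z \cdot C$, while Lemma~\ref{lemma:inequality for coefficients of flag canonical} gives $a(D_i) \leq a(D'_i)$ and $a(D_\alpha^X) \leq a(D_\alpha^Z)$.

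The remaining task is to handle the first group $0 \leq i \leq a$, where I would show $D_i \cdot C \leq 0$ by separating the two possible types of extremal curve. When $C$ is a wall curve, this sign analysis mirrors the argument in the toric setting \cite[Lemma~7.1]{satriano2021approximating}, where the wall structure forces non-positivity on the rays of $\tau$. When $C$ is a colour curve associated to some $\gamma \in \calC$, I would observe that $\tau$ is forced to be the minimal cone of $\Sigma(X)$ containing $u_\gamma$ (by the description in Section~\ref{subsec:colour curve contractions} together with the fact that $Z$ is the orbit closure corresponding to the fibre), after which the intersection formula~\eqref{eq:colour curve class} directly yields $D_i \cdot C \leq 0$ for $0 \leq i \leq a$. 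Combining all three groups of divisors with the coefficient comparison produces the chain of inequalities
\begin{align*}
-K_X \cdot C
&\leq \sum_{i=a+1}^{b} a(D_i) D_i \cdot C + \sum_{\alpha \in \calC \setminus \calF(\Star(\tau^c))} a(D_\alpha^X) D_\alpha^X \cdot C \\
&\leq \sum_{i=a+1}^{b} a(D'_i) D'_i \cdot C + \sum_{\alpha \in \calC \setminus \calF(\Star(\tau^c))} a(D_\alpha^Z) D_\alpha^Z \cdot C = -K_Z \cdot C,
\end{align*}
completing the proof.

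The main obstacle I expect is the careful bookkeeping in the colour curve case: verifying that $\tau$ really is the minimal cone containing $u_\gamma$ (so that the intersection numbers with the divisors $D_i$ corresponding to rays inside $\tau$ are all non-positive), and making sure the colour $\gamma$ itself does not create an exception to the coefficient comparison. Everything else is a fairly direct bookkeeping exercise combining the lemmas already established in Sections~\ref{sec:an inequality on positive roots} and \ref{sec:results on Picard number 1 horospherical varieties}.
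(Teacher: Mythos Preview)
Your proposal is correct and follows essentially the same approach as the paper: the same intermediate orbit closure $Z$, the same reduction via $K_Z|_F=K_F$ to the inequality $-K_X\cdot C\leq -K_Z\cdot C$, the same three-group decomposition of the $B^-$-invariant divisors, and the same pair of lemmas (Lemma~\ref{lemma:divisor-curve pairing on orbit closure} and Lemma~\ref{lemma:inequality for coefficients of flag canonical}) combined with the wall/colour case split for the sign of $D_i\cdot C$ on the rays of $\tau$. The only difference is presentational.
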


In the case where $X$ is isomorphic to projective space, Theorem \ref{thm:main theorem with Q-factorial reduction} follows from \cite[Theorem 2.6]{mckinnon2007conjecture}. Therefore, the following result concludes the proof of Proposition \ref{prop:MMP final step}. 

\begin{proposition}\label{prop:proving of MMP final step}
	Let $X$ be a projective terminal $\Q$-factorial horospherical variety, let $\pi:X\to Y$ be an MMP contraction corresponding to an extremal ray $\calR$, let $D$ be a nef $\Q$-divisor on $X$ such that $D\in\calR^\perp$, and let $p\in X(k)\cap\Exc(\pi)$ be a canonically bounded point. If $X$ is not isomorphic to projective space, then there exists an irreducible rational curve $C$ through $p$ such that $C$ is unibranch at $p$, $-K_X\cdot C\leq \dim(X)$, and $\alpha_{p,C}(D)\leq \alpha_{p,\{x_i\}}(D)$ for all Zariski dense sequences $\{x_i\}\subseteq X(k)$. 
	
	\begin{proof}
		This follows from an almost identical proof of \cite[Proposition 7.2]{satriano2021approximating} by using Lemma \ref{lemma:canonical divisor of fibre inequality} (the analogue of \cite[Lemma 7.1]{satriano2021approximating}), Lemma \ref{lemma:fibres have Picard number 1}, and Proposition \ref{prop:Picard 1 - full} (the analogue of \cite[Proposition 6.1]{satriano2021approximating}). 
	\end{proof}
\end{proposition}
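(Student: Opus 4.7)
My plan is to mirror the proof of \cite[Proposition 7.2]{satriano2021approximating}, substituting in the horospherical analogues established in the preceding sections. I begin by setting $F$ to be the reduction of the fibre of $\pi$ containing $p$. Lemma \ref{lemma:fibres have Picard number 1} identifies $F$ as a projective $\Q$-factorial horospherical variety of Picard number $1$, so Proposition \ref{prop:Picard 1 - full} applied to $F$ produces a unibranch rational curve $C \subseteq F$ through $p$ with $-K_F \cdot C \leq 1 + \dim(F)$, strengthened to $-K_F \cdot C \leq \dim(F)$ whenever $F$ is not isomorphic to projective space. The curve $C$ lies in a fibre of $\pi$, so $[C] \in \calR$, and the hypothesis $D \in \calR^\perp$ immediately gives $D \cdot C = 0$.

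The next step is to confirm the bound $-K_X \cdot C \leq \dim(X)$. Lemma \ref{lemma:canonical divisor of fibre inequality} yields $-K_X \cdot C \leq -K_F \cdot C$. If $F$ is a proper fibre then $\dim(F) < \dim(X)$ and even the weak bound $-K_F \cdot C \leq 1 + \dim(F)$ forces $-K_X \cdot C \leq \dim(X)$. The remaining case $F = X$ occurs precisely when $\pi$ contracts $X$ to a point, so $X$ itself has Picard number $1$; since $X \not\cong \P^n$ by hypothesis, the strengthened bound of Proposition \ref{prop:Picard 1 - full} applied directly to $X$ gives $-K_X \cdot C \leq \dim(X)$.

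It then remains to deduce the approximation inequality $\alpha_{p,C}(D) \leq \alpha_{p,\{x_i\}}(D)$ for every Zariski dense sequence $\{x_i\} \subseteq X(k)$. At this point I expect the argument to mirror the toric one verbatim: one uses $D \cdot C = 0$ together with $C$ being a unibranch rational curve through $p$ to control $\alpha_{p,C}(D)$, and feeds in the canonical boundedness hypothesis $\alpha_{p,\{x_i\}}(-K_X) \geq \dim(X)$ in tandem with the just-established $-K_X \cdot C \leq \dim(X)$ to obtain the desired inequality on $\alpha_{p,\{x_i\}}(D)$ through the linear-algebra manipulations in the nef cone from \cite[Section 7]{satriano2021approximating}. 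The main subtlety I anticipate is checking that no part of the toric argument implicitly relies on toric-specific positivity beyond what Lemma \ref{lemma:fibres have Picard number 1}, Lemma \ref{lemma:canonical divisor of fibre inequality}, and Proposition \ref{prop:Picard 1 - full} provide; assuming this check goes through as expected, the conclusion of the toric argument transports without change.
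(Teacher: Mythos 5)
Your proposal is correct and follows the same strategy as the paper's proof, which simply cites \cite[Proposition 7.2]{satriano2021approximating} with the three horospherical analogues (Lemma \ref{lemma:fibres have Picard number 1}, Lemma \ref{lemma:canonical divisor of fibre inequality}, and Proposition \ref{prop:Picard 1 - full}) substituted in. One small imprecision: the strengthened bound $-K_W\cdot C\leq\dim(W)$ in Proposition \ref{prop:Picard 1 - full} requires $W$ to have terminal singularities \emph{and} not be $\P^n$, not merely the latter; your argument is nonetheless sound because you only invoke the strengthened bound in the case $F=X$, where terminality is inherited directly from the hypothesis on $X$, while in the case $F\subsetneq X$ (where $F$ need not be terminal) the weak bound $-K_F\cdot C\leq 1+\dim(F)\leq\dim(X)$ suffices.
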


\printbibliography 
	
\end{document}